\DeclareSymbolFont{calletters}{OMS}{cmsy}{m}{n}
\DeclareSymbolFontAlphabet{\mathcal}{calletters}
\newtheorem{Theorem}{Theorem}[part]
\newtheorem{Definition}{Definition}[part]
\newtheorem{Proposition}{Proposition}[part]
\newtheorem{Assumption}{Assumption}[part]
\newtheorem{Lemma}{Lemma}[part]
\newtheorem{Corollary}{Corollary}[part]
\newtheorem{Remark}{Remark}[part]
\newcommand{\nc}{\newcommand}
\nc{\esssup}{\mathop{\mathrm{ess\,sup}}}
\nc{\essinf}{\mathop{\mathrm{ess\,inf}}}
\nc{\argmax}{\mathop{\mathrm{arg\,max}}}
\def \P{\mathbb{P}}
\def \N{\mathbb{N}}
\def \R{\mathbb{R}}
\def \Q{\mathbb{Q}}
\def \1{\mathds{1}}
\def \l({{\left (}}
\def \r){{\right )}}
\def \l[{{\left [}}
\def \r]({{\right ]}}
\newcommand{\MBFigure}[6]{
$\left. \right.$ \\
\refstepcounter{figure}
\addcontentsline{lof}{figure}{\numberline{\thefigure}{\ignorespaces #5}}
\begin{center}
\begin{minipage}{#1cm}
\centerline{\includegraphics[width=#2cm,angle=#3]{#4}}
\begin{center}
\upshape{F\textsc{ig} \normal
\end{center}
size{\thefigure}. $-$} #5
\end{center}
\label{#6}
\end{minipage}
\end{center}
$\left. \right.$ \\}
 \title{Quadratic Exponential Semimartingales and Application to BSDE's  with jumps \footnote{Research supported by the Chair {\it Financial Risks} of the {\it Risk Foundation} sponsored by Soci\'et\'e G\'en\'erale, the Chair {\it Derivatives of the Future} sponsored by the {F\'ed\'eration Bancaire Fran\c{c}aise}, and the Chair {\it Finance and Sustainable Development} sponsored by EDF and Calyon.}
}
\author{ Nicole  {\sc El Karoui}\footnote{Laboratoire de Probabilit\'es et Mod\`eles Al\'eatoires, UPMC, nicole.elkaroui@cmap.polytechnique.fr.}
			\and
			Anis {\sc Matoussi} \footnote{Universit\'e du Maine, Institut  du Risque et de l'Assurance du Mans, anis.matoussi@univ-lemans.fr.}
      \and
      Armand  {\sc Ngoupeyou}\footnote{Banque Centrale des Etats de l'Afrique de l'Ouest (BCEAO), ngoupeyouarmand@yahoo.fr.}
      }
 \date{First version, May 4, 2014}
\begin{document}

 \maketitle

\begin{abstract}
In this paper,  we study a class of  Quadratic Backward Stochastic Differential Equations (QBSDE in short)   with jumps and unbounded terminal condition. We extend the class of quadratic semimartingales  introduced by Barrieu and El Karoui  \cite{BElK11}  in  the jump  diffusion model. The properties of these class of  semimartingales lead us to prove existence result for the solution of a quadratic BSDE's.
\end{abstract}
\textbf{Keywords}: Backward  stochastic differential equation, quadratic semimartingales, 
exponential inequalitie .
\section{Introduction}
Backward stochastic differential equations (in short BSDE's) were
first introduced by Bismut in 1973
\cite{Bismut} as equation for the adjoint process in the stochastic
version of Pontryagin maximum principle. 
Pardoux and  Peng \cite{PP90}  have generalized  the existence and uniqueness result in the case when the driver is Lipschitz  continuous.
Since then BSDE's have been widely used in stochastic control and
especially in mathematical finance, as any pricing problem by
replication can be written in terms of linear BSDEs, or non-linear
BSDEs when portfolios constraints are taken into account as in El Karoui,  Peng and Quenez \cite{ElkPQ97}.
Another direction which has attracted many works in this area,
especially in connection with applications, is how to improve the
existence/uniqueness conditions of a solution under weaker conditions on the driver.
Particularly in those papers it is
assumed that $f$ is just continuous and satisfies  a
quadratic growth condition. Among them we can quote Kobylanski \cite{K00},  Lepeltier and  San
Martin \cite{LepSan98} and so on. All of those works are assumed that the terminal condition is bounded and they are based on an exponential change of variable, troncation porcedure  and comparison theorem  of solutions
of BSDE's. Nonetheless, note that in general we do not have
uniqueness of the solution. In \cite{K00} a uniqueness result is also given by adding a more stronger conditions on the coefficient.
This latter model of BSDE's is very useful in mathematical finance
especially when we deal with exponential utilities or risk measure
theory especially weather derivatives (see e.g. El Karoui and  Rouge \cite{ElKarouiRouge00}, Mania and Schweizer \cite{MS05},
Hu, Imkeller and M\"uller \cite{HuImkellerMueller05},   Barrieu and El Karoui  \cite{BElkX08} and Becherer (\cite{Bech04}, \cite{Bech06})). Actually it has been shown in \cite{ElKarouiRouge00} that in a market model with constraints on the portfolios, the indifference price is given by the resolution of a BSDE with quadratic
growth coefficient. Finally let us point out that control
risk-sensitive problems turn into BSDE's which fall in the same
framework in  El Karoui and Hamad\`ene \cite{ElKarouiHamadene03}.
Our work was also motivated  by solving a utility maximization problem  of terminal wealth with  exponential utility function in models involving assets with jumps. Therefore we need to consider Backward Differential Equations with jumps  of the form
\begin{equation}\label{bsdej}Y_t=\eta_T+\int_t^T
f_s(Y_s,Z_s,U_s)ds-\int_t^T Z_s dW_s -\int_t^T \int_{E}U(s,x).\widetilde \mu(ds,dx)
\end{equation}
where $\widetilde\mu$ is a martingale random measure. {A solution of such  BSDE associated with $(f, \eta_T)$ is  a  triple of square integrable processes
${(Y_t,Z_t,U_t)}_{0\le t\le T}$. The standard BSDE's with jumps driven by Lipschitz coefficient was first introduced by Barles, Buckdahn and Pardoux \cite{BBP97} in order to give a probabilistic interpretation of viscosity solution of semilinear integral-Partial equations.  Afterwards the case of BSDE's with jumps and  quadratic coefficient was studied  by Becherer \cite{Bech06} and Morlais \cite{Mor07} in the context of exponential utility maximization problem in model involving jumps. In the both papers (\cite{Bech06}, \cite{Mor07}), the authors have used in the case of bounded terminal condition the Kobylanski method in the jump setting. As a consequence, they obtain that the state process $Y $ and the jump components $U$ of the BSDE solution are uniformly bounded, and that the martingale component is a BMO-martingale. Moreover, the so-called Kobylanski method is based on analytical point view inspired from Boccardo, Murat and Puel paper \cite{BMP83} and it is  based on the exponential change of variables,  troncation procedure and stability theorem. Therefore, one of the main difficulty in this method is  the proof of the strong convergence in the martingale part approximation. More recently Tevzadze \cite{MT06}  proposed a new different method to get the existence and uniquness of the solution of quadratic BSDE's. The method is based on a fixed point theorem but for only bounded terminal condition with small $L^{\infty}$-norm. \\
Our  main task in this paper is to deal with  quadratic BSDE's with  non-bounded terminal valued and  jumps. Our point of view  is inspired from Barrieu and El Karoui \cite{BElK11} for their study in the continuous case. By adopting a forward point of view, se shall characterize first a solution of  BSDE's as a quadratic It\^o semimartingale $Y$, with a decomposition satisfying the \textit{quadratic exponential structure condition} ${q}_{exp}(l,c,\delta)$, where the term \textit{exponential} refers to the exponential feature of the jump coefficient which appears in the generator of the BSDE. More precisely, we assume that: there exists nonnegative processes
constants $c$ , $\delta$ and $l$ such
\begin{equation}\label{eq1}
 -l_t-c_t|y|-{1\over 2}\delta |z|^2-{1\over \delta}j_t(-\delta u)
\leq  f(t,y,z,u) \le l_t+c_t|y|+{1\over 2}\delta |z|^2+{1\over
\delta}j_t(\delta u),  \; a.s.  
\end{equation}
where
$j_t(u)=\int_{E} (e^{u(x)}-u(x)-1)\xi(t,x)\lambda(dx)$. The canonical structure ${q}_{exp}(0,0,\delta)$ will play a essential role in the construction of the solution associated to  generale ${q}_{exp}(l,c,\delta)$ structure condition. The simplest
   generator of  a quadratic exponential BSDE,  called  the
\textit{canonical generator}, is defined as
$f(t,y,z,u)=q_{\delta}(z,u)={\delta\over 2}|z|^2 +{1\over
\delta}j(\delta u)$. For a given random variable $\psi_T$, we call  \textit{entropic process}, the process defined as $\rho_{\delta,t}(\eta_T)={1\over \delta}\ln \mathbb{E} \left[\exp(\delta \eta_T)\Big \vert {\cal F}_t\right]$ which is a  solution of the canonical BSDE's associated to the coefficient $q_{\delta}$ and final condition $\psi_T$. This is a entropic dynamic risk measure which have been studied, by  Barrieu  and El Karoui in \cite{BElkX08}. The backward point of view of our approach permits to relate the quadratic BSDEs to a quadratic exponential semimartingale with structure condition ${q}_{exp}(l,a,\delta)$, using the entropic processes. Namely,  a semimartingale $X$ with non bounded terminal condition $\eta_T$ and  satisfying the structure condition ${q}_{exp}(l,a,\delta)$, yields  the  following dominated inequalities $\rho_{-\delta,t}(\underline{U}_T)\le Y_t\le \rho_{\delta,t}(\overline{U}_T)$, where $\underline{U}_T$ and $\overline{U}_T$ are two random variable depending only on $l$, $a$, $ \delta$ and $\eta_T$. In the continuous seeting, 
Briand and Hu \cite{BriandHu05} prove implicitly the entropy inequalities  in the proof of the existence of the
solution of a quadratic BSDE, using Kobylanski method and localization procedure.\\
The  main goal in our approach is then to deduce, from this dominated inequalities,  a structure properties on the martingale
part and the finite variation part of $X$}.  Indeed, we obtain the canonical decomposition  of an entropic
quasimartingale which is a semimartingale which  satisfies the entropy inequalities;  as a canonical quadratic semimartingale part plus an predictable
increasing process.    This Doob type decomposition help us to define a general quadratic exponential
semimartingale as a limit of a sequence of canonical quadratic semimartingale plus a sequence of an increasing process. Then,
from the  stability theorem for forward semimartingales given by Barlow and Protter \cite{BP90}, we prove the existence of the solution of a
quadratic exponential BSDE associated to $(f, \eta_T)$ for  a coefficient $f$ satisfying the structure condition ${q}_{exp}(l,a,\delta)$ and for non-bounded terminal condition $\eta_T$. Finally, we have to mention that  it is  important to compare our approach with that used by Peng in \cite{Peng97, Peng03, Peng04}  within the representation theorem of small $g$-expectation  in terms of a BSDE's with  coefficient $g$ which  admits a linear growth condition in $z$. Peng's approach is based on the notion of martingale associated to a nonlinear expectation, Monotonic limit theorem, a nonlinear Doob-Meyer's decomposition Theorem (see e.g. \cite{Peng99}). Moreover, Peng obtained the representation theorem for  the  nonlinear expectation which is dominated by a structure nonlinear expectation solution of BSDE's with coefficient given specially by $ g_{\mu} (y,z) = \mu \big( |y| + |z|\big)$.  Barrieu  and El Karoui in \cite{BElkX08} have extended this representation theorem  for a dynamic convex risk measure in terms of quadratic BSDE's with convex coefficient $g$ which depends only in $z$. Our approach is an extension of the Peng's results in the more naturel framework of quadratic exponential semimartingale.\par\medskip
The paper is structured as follows: in a  second section, we  give a model and preliminary notation. In the third section, we define the quadratic exponential
semimartingale and we study the entropic quadratic exponential semimartingale. In particular, we give the characterization
of an entropic quasimartingale and its Doob decomposition. Then, a stability results of this class of semimartingale are given in the fourth section. The fifth  section is dedicated to  give 
 application of  the quadratic exponential semimartingale to prove existence result  for a class of QBSDE's associated to $(f, \eta_T)$  where the coefficient $f$ satisfies the structure condition ${q}(l,a,\delta)$ and for non-bounded terminal condition $\eta_T$. 

\section{Model and Preliminaries}
This section sets out the notations and the assumptions that are supposed in the sequel. We start with a stochastic basis $(\Omega,{\mathcal F},\mathbb{F},\mathbb{P})$ with finite horizon time $T<+\infty$ and a filtration $\mathbb{F}= \big(\mathcal F_t\big)_{t\in[0,T]}$ satisfying the usual conditions of right continuity and completness such that we can take all semimartingales to have right continuous paths with left limits. For simplicity, we assume ${\mathcal F}_0$ is trivial and ${\mathcal F}={\mathcal F}_T$. Without losing any generality we  shall work with a random measure to characterize   the jumps of any quasi-left continuous process $X$. Let $(E,{\cal E})$ a measurable space,  let define on the stochastic basis, a random measure left continuous $\mu(\omega,dt\times dx)$ on  $(\Omega,{\cal F})$ in $([0,T]\times {E},{\cal B}([0,T])\times {\cal E})$: 
$$\mu^X(\omega,dt,dx)=\sum_{s>0}1_{\{\Delta X_s(\omega)\not=0\}}{\varepsilon}_{({s,X_s(\omega))}}(dt,dx).$$
\noindent where $\varepsilon_a$ is the Dirac measure on $a$. Moreover the dual predictable projection $\nu^X$ of $\mu^X$ exists and it is called L\'evy system of $X$ (see Yor \cite{Yor75} for more details). For simplicity we note $\mu=\mu^X$ and $\nu=\nu^X$. Define the measure $\mathbb{P}\otimes \nu$ on $(\widetilde\Omega,\widetilde {\mathcal F})=(\Omega\times[0,T]\times E,{\cal F}\otimes {\cal B}([0,T])\otimes {\cal  E})$ by:
$$\mathbb{P}\otimes \nu(\widetilde B)=\mathbb{E}\left[\int_{[0,T]\times E} \textbf{1}_{\widetilde B}(\omega,t,e)\nu(\omega,dt,de)\right],\quad \widetilde B\in \widetilde {\cal F}.$$
\noindent Let ${\cal P}$ denote the predicatble $\sigma$-field on $\Omega \times [0,T]$ and define $\widetilde {\cal P}={\cal P}\otimes {\cal E}$, for any  $\widetilde {\cal P}$-measurable function $y$  with values in $\R$; we define:
$$y . \mu_t=\int_0^t\int_E y(w,s,x)\mu(w,ds,dx),\hbox{ and } y . \nu_t=\int_0^t\int_E y(w,s,x)\nu(w,ds,dx).$$
Let denote  by ${\cal G}_{loc}(\mu)$, the set of $\widetilde {\cal P}$- measurable functions $H$ with values in $\R$ such that $$|H|^2.\nu_t <\infty, \quad a.s.$$
Moreover if  $|H|.\nu_t<+\infty$ a.s, $H.\widetilde \mu:=H.(\mu-\nu)=H.\mu-H.\nu$  is a local martingale and we assume the following representation theorem for any local martingale $M$:   
$$M=M_0+M^c+M^d.$$ 
\noindent where $M^c$ is the continuous part of the martingale and $M^d$ is the discontinuous part, moreover there exists $U\in {\cal G}_{loc}(\mu)$ such that $M^d=U.(\mu-\nu).$ 
\\\\\noindent  We first introduce the following  notations:\\\\
 ${\cal M}^p_0$ is the set of martingale $M$ such that  $M_0=0$ and  $\mathbb{E}\left[\sup_{t\le T} |M_t|^p\right] <  +\infty$.\\
${\cal D}_{\exp}$ is the set of local  semimartingales $X$ such that $\exp(X) \in {\cal D}$ where $ \cal D$  (see \cite{Del72}, \cite{DM75} for the definition) \\
 ${\cal U}_{\exp}$ is the set of local martingales $M$ such that ${\cal E}(M)$ is uniformly integrable.

\section{Quadratic exponential semimartingales} 
In all our work, we shall consider the class of quasi-left continuous semimartingales $X$  with canonical decomposition $X=X_0-V+M$, with $V$ is a continuous predictable process with finite total variation $|V|$,   $M$ is a {c\`adl\`ag} local martingale satisfying the decomposition $M=M^c+M^d$  with $M^c$ is the continuous part of the  martingale $M$ and $M^d=U.\widetilde\mu$ for some $U\in {\cal G}_{loc}(\mu)$ is the purely discontinuous part .
The quadratic exponential semimartingales are the generalization of the quadratic semimartingales in jump diffusion models. The extra term in " exponential" comes from jumps and lead us to generalize the results given by \cite{BElK11}.

\begin{Definition}  The process $X$ is a local quadratic exponential semimartingale if there exists two positive continuous increasing processes $\Lambda$ and $C$ and a positive constant $\delta$ such that the processes $\delta M^c+(e^{\delta U}-1).\widetilde\mu $, $-\delta M^c+(e^{-\delta U}-1).\widetilde \mu$ are still  local  martingales and the the finite variation of $X$ satisfies the structure condition ${\cal Q}(\Lambda,C,\delta)$:
$$-{\delta\over 2}d\langle M^c\rangle_t-{1\over \delta}d\Lambda_t-|X_t|.dC_t-{1\over \delta}dj_t(-\delta \Delta M^d_t)<\!\!<dV_t<\!\!< {\delta\over 2}d\langle M^c\rangle_t+d\Lambda_t+|X_t|.dC_t+{1\over \delta}dj_t(\delta \Delta M^d_t)$$
The process $j(\gamma\Delta M^d)$ represents the predictable compensator of the increasing process  $A^\gamma:==\sum_{s\le t} (e^{\gamma\Delta M^d_s}-\gamma\Delta M^d_s-1)<+\infty$ a.s. $B<\!\!< A$ stands for $A-B$ is an increasing process.
\end{Definition} 
\begin{Remark} (About the dual predictable compensator)
\begin{itemize}
\item Before studying the properties of this class of local semimartingale, let first remark that for all $\gamma \in \{-\delta,\delta\}$, the increasing c\`adl\`ag process $j(\gamma \Delta M)$ is continuous applying Chap IV T[40] Dellacherie\cite{Del72}. Moreover using representation theorem of the discontinuous martingale  $M^d=U.\widetilde \mu$, then:
$$j_t(\gamma \Delta M^d_t)=(e^{\gamma U}-\gamma U-1).\nu_t.$$
\item Let remark that for $a=e^{\Delta U}-1$ , $b=e^{-\Delta U}-1$ since $-2ab\le a^2+b^2$, we find 
$$2[(e^{\delta U}-\delta U-1)+(e^{-\delta U}+\delta U-1)]\le |e^{\delta U}-1|^2+|e^{-\delta U}-1|^2$$
Since by assumption the processes $\delta M^c+(e^{\delta U}-1)\widetilde \mu $, $-\delta M^c+(e^{-\delta U}-1)\widetilde \mu$ are local martingales, the processes $|e^{\delta U}-1|^2.\nu_t$ a.s and $|e^{-\delta U}-1|^2.\nu_t<+\infty$ a.s,  therefore the  predictable compensator $j(\gamma \Delta M^d)$ of $A^{\gamma}$ is well defined for $\gamma\in \{-\delta,\delta\}$.
\end{itemize}
\end{Remark}
\noindent To understand better the class of local quadratic exponential semimartingales and theirs properties, we divide the class in three classes:
\begin{itemize}
\item \textit{The first class (The canonical class)}, where the finite variation part of $X$ satisfies:
$$V_t={1\over 2}\langle M^c\rangle_t+ j_t(\delta M^d_t) \hbox{ or } V_t=-{1\over 2}\langle M^c\rangle_t -j_t(\Delta M^d_t)$$
\item \textit{The second class (The class ${\cal Q}(0,0,1)$)}, where the finite variation part of $X$ satisfies:
$$-j_t(-\Delta M^d_t)-{1\over 2}\langle M^c\rangle_t<\!\!<V_t<\!\!< {1\over 2}\langle M^c\rangle_t+ j_t(\Delta M^d_t)$$
\item \textit{The third class (The general class} ${\cal Q}(\Lambda,C,\delta)$), where the finite variation part of $X$ satisfies:
$$-{\delta\over 2}\langle M^c\rangle_t-{1\over \delta}\Lambda_t-|X|*C_t-{1\over \delta}j_t(-\delta \Delta M^d_t)<\!\!<V_t<\!\!< {\delta\over 2}\langle M^c\rangle_t+{1\over \delta}\Lambda_t+|X|*C_t+{1\over \delta}j_t(\delta \Delta M^d_t)$$
\end{itemize}
\subsection{The canonical class}
\subsubsection{The exponential of Dol\'eans-Dade} 
We describe the relation between the exponential transform of a first class of local  quadratic exponential semimartingale and the exponential of Dol\'eans-Dade . Let first recall that for any {c\`adl\`ag} local semimartingale $X$, the exponential of Dol\'eans-Dade $Z$ of $X$ solving the EDS $dZ_t=Z_{t^-}dX_t$,    $Z_0=1$ is given by:
\begin{equation}\label{Doleans}
Z_t={\cal E}(X_t)=\exp(X_t-\langle X^c\rangle_t)\prod_{s\le t}(1+\Delta X_s)e^{-\Delta X_s}, \quad t\ge 0.
\end{equation}
\noindent This formula is given by the Ito's formula for discontinuous processes see Appendix (Theorem \ref{Ito1}, Corollary \ref{Ito2}) and Yor \cite{Yor75} for more details. We deduce that for a local martingale $M$, such that $\Delta M>-1$, the exponential of $M$ is a positive local martingale and there is some relation between  exponential of a canonical quadratic exponential semimartingale and  Dol\'eans-Dade of some local martingale.
\begin{Proposition}( Dol\'eans Dade martingale and canonical quadratic semimartingale). Let $\bar M=\bar M^c+\bar U.\widetilde \mu$ and $\underline{M}=\underline {M}^c+\underline{U}.\widetilde \mu$ two {c\`adl\`ag} local martingales such that  $\bar M^c+(e^{\bar U}-1).\widetilde \mu$ and 
$-\underline{M}^c+(e^{-\underline{U}}-1).\widetilde \mu$
are still {c\`adl\`ag} local martingales. Let define the canonical local quadratic exponential semimartingale:
\begin{equation*}
\begin{split}
&r(\bar M)=r(\bar M_0)+\bar M_t-{1\over 2}\langle \bar M^c\rangle_t-(e^{\bar U}-\bar U-1).\nu_t,\\
&\underline{r}(\underline{M})=\underline{r}(\underline{M}_0)+\underline{M}_t+{1\over 2}\langle \underline{M}^c\rangle_t+(e^{-\underline{U}}+\underline{U}-1).\nu_t
\end{split}
\end{equation*}
then we find the following processes:
$$\exp[r(\bar M)-r(\bar M_0)]={\cal E}\left(\bar M^c+(e^{\bar U}-1).\widetilde\mu\right) \hbox{ and  } \exp[-\underline{r}(\underline{M})+\underline{r}(\underline{M}_0)]={\cal E}\left(-\underline{M}^c+(e^{-\underline{U}}-1).\widetilde\mu\right)$$
\noindent are positive local martingales.
\begin{proof} We apply the Dol\'eans-Dade exponential formula \eqref{Doleans} with $\bar X=\bar M^c+(e^{\bar U}-1).\widetilde\mu$ and $\underline{X}=-\underline {M}^c+(e^{-\underline{U}}-1).\widetilde \mu.$ and we find the expected results.  
\end{proof}
\end{Proposition}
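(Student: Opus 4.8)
Both identities are instances of the explicit Dol\'eans--Dade formula \eqref{Doleans}; the plan is to carry out the computation for the first one in detail and then read off the second by the substitution $\bar M^c\mapsto-\underline M^c$, $\bar U\mapsto-\underline U$.

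First I would set $\bar X:=\bar M^c+(e^{\bar U}-1).\widetilde\mu$, which is a c\`adl\`ag local martingale by hypothesis, and record its continuous and jump parts. Since $X$ is quasi-left continuous its L\'evy system $\nu$ is continuous in time, so the jumps of $(e^{\bar U}-1).\widetilde\mu=(e^{\bar U}-1).\mu-(e^{\bar U}-1).\nu$ are carried entirely by the $d\mu$-part: $\bar X^c=\bar M^c$ and $\Delta\bar X_s=e^{\bar U(s,\Delta X_s)}-1=e^{\Delta\bar M^d_s}-1$ at each jump time. In particular $1+\Delta\bar X_s=e^{\Delta\bar M^d_s}>0$, hence $\Delta\bar X>-1$ and, by \eqref{Doleans}, ${\cal E}(\bar X)$ is a \emph{positive} local martingale.

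Next I would plug $\bar X$ into \eqref{Doleans}. The jump product becomes $\prod_{s\le t}(1+\Delta\bar X_s)e^{-\Delta\bar X_s}=\exp\big(\sum_{s\le t}\big[\Delta\bar M^d_s-(e^{\Delta\bar M^d_s}-1)\big]\big)=\exp\big(\bar U.\mu_t-(e^{\bar U}-1).\mu_t\big)$, using $\sum_{s\le t}\Delta\bar M^d_s=\bar U.\mu_t$ and $\sum_{s\le t}(e^{\Delta\bar M^d_s}-1)=(e^{\bar U}-1).\mu_t$. Inserting this together with $\bar X_t=\bar M^c_t+(e^{\bar U}-1).\mu_t-(e^{\bar U}-1).\nu_t$ and $\langle\bar X^c\rangle_t=\langle\bar M^c\rangle_t$, the $.\mu_t$-terms cancel and one obtains ${\cal E}(\bar X)_t=\exp\big(\bar M^c_t+\bar U.\mu_t-\tfrac12\langle\bar M^c\rangle_t-(e^{\bar U}-1).\nu_t\big)$. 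Finally, rewriting $\bar M_t=\bar M^c_t+\bar U.\mu_t-\bar U.\nu_t$, the exponent becomes $\bar M_t-\tfrac12\langle\bar M^c\rangle_t-(e^{\bar U}-\bar U-1).\nu_t=r(\bar M)_t-r(\bar M_0)$, which is the first claimed identity. The second one follows verbatim with $-\underline M^c$ and $-\underline U$ in place of $\bar M^c$ and $\bar U$, using $\Delta\big(-\underline M^c+(e^{-\underline U}-1).\widetilde\mu\big)_s=e^{-\Delta\underline M^d_s}-1>-1$.

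I do not expect a genuine obstacle here: the argument is bookkeeping around \eqref{Doleans}. The only points needing attention are the use of quasi-left continuity to route every jump contribution through $d\mu$, and the finiteness $(e^{\bar U}-1).\nu_t<\infty$ and $\bar U.\nu_t<\infty$ a.s.\ that legitimizes the splittings $H.\widetilde\mu=H.\mu-H.\nu$ and the conversion of sums over jumps into integrals against $\mu$; this is exactly what the preceding Remark supplies, since the local-martingale assumption forces $(e^{\bar U}-1)\in{\cal G}_{loc}(\mu)$ and the compensator $(e^{\bar U}-\bar U-1).\nu$ of $\sum_{s\le t}(e^{\Delta\bar M^d_s}-\Delta\bar M^d_s-1)$ to be well defined and finite.
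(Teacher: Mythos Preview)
Your proof is correct and follows exactly the approach of the paper: set $\bar X=\bar M^c+(e^{\bar U}-1).\widetilde\mu$ (resp.\ $\underline X=-\underline M^c+(e^{-\underline U}-1).\widetilde\mu$) and expand ${\cal E}(\bar X)$ via the Dol\'eans--Dade formula \eqref{Doleans}. The paper's proof is a one-line reference to this computation, while you have carefully spelled out the jump bookkeeping and the positivity check $\Delta\bar X>-1$; there is no substantive difference.
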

\begin{Definition}(${\cal Q}$- local martingale) A local semimartingale $X$ is a ${\cal Q}$-local martingale if $\exp(X)$ is a positive local martingale. 
\end{Definition}  
The canonical local quadratic exponential semimartingales $\bar r(\bar M)$ and $-\underline{r}(\underline{M})$ defined above are ${\cal Q}$- local martingales.
\subsubsection{The entropic risk measure}
The canonical local quadratic exponential semimartingales $\bar r(\bar M)$ and $\underline{r}(\underline{M})$  are ${\cal Q}$- local martingales, we can find more conditions on the local martingales $\bar M$ and $\underline{M}$ to get the uniform integrability condition of these semimartingales. Let first denote by ${\cal U}_{\exp}$ the set of local martingales $M$ such that ${\cal E}(M)$ is uniformly integrable and ${\cal D}_{exp}$ the set of local semimartingale such that $\exp(X) \in {\cal D}$. The sufficient condition that a local martingale $M=M^c+U.\widetilde \mu$ belongs to ${\cal U}_{\exp}$ is given in Lepingle and M\'emin \cite{LeMe78} Theorem IV.3:
\begin{equation}\label{UnifInt}
\begin{split}
\mathbb{E}\left[\exp\{{1\over 2}\langle M^c\rangle_{\tau}+\left((1+U)\ln(1+U)-U\right).\nu_\tau\}\right]<+\infty.
\end{split}
\end{equation}
 where $\tau=\inf\{t\ge 0, {\cal E}(M)=0\}$. This condition is sufficient and not necessary, another sufficient condition for a local semimartingale $X$ to belong to ${\cal D}_{exp}$ is satisfying if there exists a positive uniformly integrable martingale $M$ such that $\exp(X)\le M$. In particular theses sufficient conditions are satisfying for the dynamic entropic risk measure (see Barrieu and El Karoui for more details\cite{BElkX08}).
 \begin{Proposition} Let consider the fixed horizon time $T>0$ and $\psi_T \in {\cal F}_T$ such that $\exp(|\psi_T|)\in L^1$ and consider the two dynamic risk measures:
$$\bar\rho_t(\psi_T)=\ln\left[\mathbb{E}\left(\exp(\psi_T)\vert{\cal F}_t\right)\right],\hbox{ and } \underline{\rho}_t(\psi_T)=-\ln\left[\mathbb{E}\left(\exp(-\psi_T)\vert {\cal F}_t\right)\right]$$
There exists local martingales $\bar M=\bar M^c+\bar U.\widetilde \mu$ and $\underline{M}=\underline{M}^c+\underline{U}.\widetilde \mu$ such that:
\begin{equation*}
\begin{split}
&-d\bar\rho_t(\psi_T)=-d\bar M_t+{1\over 2}d\langle \bar M^c\rangle_t+\int_E(e^{{\bar U}(s,x)}-\bar U(s,x)-1).\nu(dt,dx),\quad \bar\rho_T(\psi_T)=\psi_T\\&
-d\underline{\rho}_t(\psi_T)=-d\underline{M}_t-{1\over 2}d\langle \underline{M}^c\rangle_t-\int_E(e^{-\underline{U}(s,x)}+\underline{U}(s,x)-1).\nu(dt,dx),\quad \underline{\rho}_T(\psi_T)=\psi_T
\end{split}
\end{equation*}
Moreover the local martingales $\bar M^c+(e^{\bar U}-1).\widetilde\mu$ and $-\underline{M}^c+(e^{-\underline{U}}-1).\widetilde\mu$ belong to ${\cal U}_{\exp}$. The dynamic risk measures  $\bar\rho(\psi_T)$ and $\underline{\rho}(\psi_T)$ are uniformly integrable canonical quadratic exponential semimartingales.
 \end{Proposition}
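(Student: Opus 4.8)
\textbf{Approach.} The statement is really a packaging result: given $\psi_T$ with $\exp(|\psi_T|)\in L^1$, the two entropic processes $\bar\rho(\psi_T)$ and $\underline\rho(\psi_T)$ are the logs (resp.\ negative logs) of the $L^1$-martingales $\bar N_t:=\mathbb{E}[\exp(\psi_T)\mid\mathcal F_t]$ and $\underline N_t:=\mathbb{E}[\exp(-\psi_T)\mid\mathcal F_t]$. The plan is to represent each of these strictly positive martingales as a Dol\'eans--Dade exponential, read off the driving local martingale, then invert the correspondence of the preceding Proposition (Dol\'eans--Dade vs.\ canonical quadratic exponential semimartingale) to obtain the claimed decomposition of $\bar\rho$ and $\underline\rho$. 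Uniform integrability then comes from the fact that $\bar N,\underline N$ are themselves uniformly integrable (being closed martingales), which is exactly the defining property required for membership in ${\cal U}_{\exp}$.

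\textbf{Key steps, in order.} First I would note $\exp(\psi_T)\in L^1$ and $\exp(-\psi_T)\in L^1$ from the hypothesis $\exp(|\psi_T|)\in L^1$, so $\bar N$ and $\underline N$ are well-defined, strictly positive, uniformly integrable c\`adl\`ag martingales (using right-continuity of $\mathbb F$), with $\bar N_T=\exp(\psi_T)$, $\underline N_T=\exp(-\psi_T)$. Second, since $\bar N>0$, the stochastic logarithm $\bar L:=\int_0^{\cdot}\bar N_{s^-}^{-1}\,d\bar N_s$ is a local martingale with $\Delta\bar L>-1$ and $\bar N=\bar N_0\,{\cal E}(\bar L)$; by the representation theorem assumed in Section~2, $\bar L=\bar L^c+K.\widetilde\mu$ for some $K\in{\cal G}_{loc}(\mu)$ with $K>-1$. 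Define $\bar U:=\ln(1+K)$ and $\bar M^c:=\bar L^c$, and set $\bar M:=\bar M^c+\bar U.\widetilde\mu$; then $e^{\bar U}-1=K$, so $\bar M^c+(e^{\bar U}-1).\widetilde\mu=\bar L$ is a local martingale, as required. Third, apply the Dol\'eans--Dade formula \eqref{Doleans} to $\bar L$: taking logs gives, with the notation of the previous Proposition, exactly $\ln\bar N_t-\ln\bar N_0 = r(\bar M)_t-r(\bar M)_0$, i.e.\ $\bar\rho_t(\psi_T)=\ln\bar N_t$ satisfies the stated backward equation $-d\bar\rho_t=-d\bar M_t+\tfrac12 d\langle\bar M^c\rangle_t+(e^{\bar U}-\bar U-1).\nu(dt,dx)$ with terminal value $\psi_T$. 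Fourth, repeat the argument with $\underline N$: write $\underline N=\underline N_0\,{\cal E}(\bar L')$ with $\bar L'=\bar L'^c+K'.\widetilde\mu$, and this time set $\underline U:=-\ln(1+K')$, $\underline M^c:=-\bar L'^c$, $\underline M:=\underline M^c+\underline U.\widetilde\mu$, so that $-\underline M^c+(e^{-\underline U}-1).\widetilde\mu=\bar L'$ is a local martingale; taking $-\ln$ of $\underline N=\underline N_0{\cal E}(\bar L')$ and using \eqref{Doleans} yields the second displayed equation for $\underline\rho$. Fifth, for the ${\cal U}_{\exp}$ claim: ${\cal E}(\bar M^c+(e^{\bar U}-1).\widetilde\mu)={\cal E}(\bar L)=\bar N/\bar N_0$ is a uniformly integrable martingale by Step~1, hence $\bar M^c+(e^{\bar U}-1).\widetilde\mu\in{\cal U}_{\exp}$, and symmetrically for $-\underline M^c+(e^{-\underline U}-1).\widetilde\mu={\cal E}(\bar L')=\underline N/\underline N_0$. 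Finally, $\bar\rho=\ln\bar N$ and $\underline\rho=-\ln\underline N$ are canonical quadratic exponential semimartingales by the very decompositions just obtained (they are the processes $r(\bar M)$ and $-\underline r(\underline M)$ of the previous Proposition), and $\exp(\bar\rho)=\bar N\in{\cal D}$, $\exp(-\underline\rho)=\underline N\in{\cal D}$ are uniformly integrable, giving $\bar\rho,\underline\rho\in{\cal D}_{exp}$ and the uniform-integrability assertion.

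\textbf{Main obstacle.} The only genuinely delicate point is the passage from "$\bar N$ strictly positive martingale" to the Dol\'eans--Dade representation with $\Delta\bar L>-1$ and the identification $e^{\bar U}-1=K$: one must check that the stochastic logarithm is well-defined (localization away from $0$, which is automatic since $\bar N$ and $\bar N_{-}$ are strictly positive because $\exp$ is), that $K\in{\cal G}_{loc}(\mu)$ satisfies $K>-1$ so that $\bar U=\ln(1+K)$ makes sense, and that $(e^{\bar U}-\bar U-1).\nu$ is finite — which follows because $\bar M^c+(e^{\bar U}-1).\widetilde\mu$ is a local martingale, exactly as in the Remark following the Definition. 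Everything else is bookkeeping with \eqref{Doleans} and the previous Proposition.
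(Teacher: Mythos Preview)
Your proposal is correct and follows essentially the same route as the paper: represent the positive uniformly integrable martingale $\bar N_t=\mathbb{E}[\exp(\psi_T)\mid\mathcal F_t]$ as a Dol\'eans--Dade exponential via the stochastic logarithm, apply the martingale representation theorem to identify the driver as $\bar M^c+(e^{\bar U}-1).\widetilde\mu$, invoke the preceding Proposition to read off the canonical decomposition of $\bar\rho=\ln\bar N$, and conclude ${\cal U}_{\exp}$-membership from the uniform integrability of $\bar N/\bar N_0$. Your write-up is in fact more explicit than the paper's (you spell out $\bar U=\ln(1+K)$ and flag the strict positivity of $\bar N_{-}$ needed for the stochastic logarithm), but the argument is the same.
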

 \begin{proof} We have $\exp(\bar\rho_t(\psi_T))=\mathbb{E}\left[\exp(\psi_T)\vert {\cal F}_t\right]$ which is a positive uniform integrable martingale since $\exp(|\psi_T|)\in L^1$ then there exists a martingale $\bar X\in {\cal U}_{exp}$ satisfying $\Delta \bar X>-1$ such that $\exp(\bar\rho_t(\psi_T))={\cal E}(\bar X_t)$. Using martingale representation Theorem there exists a continuous martingale $M^c$ and a process $U$ satisfying $e^U-1\in {\cal G}_{loc}(\mu)$ such that $\bar X=\bar M^c+(e^{\bar U}-1).\widetilde \mu.$ Therefore we find
$\exp(\bar\rho_t(\psi_T))={\cal E}(\bar M^c_t+(e^{\bar U}-1).\widetilde\mu_t)=\exp(\bar r(\bar M_t))$. We use the same arguments to prove that there exists a martingale $\underline{X}=-\underline{M}^c+(e^{-\underline U}-1).\widetilde\mu\in {\cal U}_{\exp}$ such that
$\exp(-\underline{\rho}_t(\psi_T))={\cal E}(-\underline{M}^c_t+(e^{-\underline U}-1).\widetilde\mu_t)=\exp(-\underline{r}(\underline{M}_t)).$
\end{proof}
We adopt a forward and backward points of view to describe the canonical local quadratic exponential semimartingales class. In the forward point of view, we give condition of some martingales using Dol\'eans-Dade exponential formula to find that for any canonical local  quadratic exponential semimartingale $X$, $\exp(X)$ or $\exp(-X)$ is a local martingale. In the backward point of view, we fix a terminal condition $X_T \in {\cal F}_T$ such that $\exp(|X_T|)\in L^1$, then we can prove that some dynamic entropic risk measures of $\psi_T$ belongs to canonical quadratic exponential semimartingale class. In this point of view, we do not  make assumption on the martingale part of the canonical semimartingale to satisfy the Lepingle and M\'emin condition \eqref{UnifInt} since the exponential condition on the terminal condition is sufficient to find uniform integrability condition. 
\subsection{The second class: ${\cal Q}(0,0,1)$}
\subsubsection{The exponential transform}
In the first part, we use the Dol\'eans-Dade formula to explain how the canonical local quadratic exponential semimartingale can be represented using an exponential transform. The same technics can be developped for ${\cal Q}(0,0,1)$- local semimartingale using the multiplicative decomposition Theorem studied by Meyer and Yoeurp \cite{{MeYoe76}}) which stands that for any c\`adl\`ag  positive local submartingale $Z$ there exists an predictable increasing process $A$ ($A_0=0)$ and a local martingale
$M$ ($\Delta M>-1, M_0=0$) such that:
$$Z_t=Z_0\exp(A_t).{\cal E}(M_t),\quad t\ge 0.$$
\begin{Theorem}\label{expsubmartingale} Let $X$ a l\`adl\`ag process, $X$ is a ${\cal Q}(0,0,1)$-local semimartingale if and only if $\exp(X)$ and $\exp(-X)$ are local submartingales. In both cases, $X$ is a c\`adl\`ag process.
\end{Theorem}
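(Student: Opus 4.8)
The plan is to prove the two implications separately: ``$\Rightarrow$'' by It\^o's formula applied to $\exp(\pm X)$, and ``$\Leftarrow$'' by the Meyer--Yoeurp multiplicative decomposition recalled just above; in both cases the one-sided inequalities of ${\cal Q}(0,0,1)$ will be read off from the increasing finite-variation parts that appear.

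\textbf{Structure condition $\Rightarrow$ submartingales.} Let $X=X_0-V+M$ satisfy ${\cal Q}(0,0,1)$, with $M=M^c+U.\widetilde\mu$ and $M^c+(e^U-1).\widetilde\mu$, $-M^c+(e^{-U}-1).\widetilde\mu$ local martingales. Applying It\^o's formula for discontinuous semimartingales to $\exp(X)$ and using that $V$ is continuous (so $\Delta X=\Delta M^d$ on the support of $\mu$), I regroup the stochastic integral $\int_0^\cdot e^{X_{s^-}}\,dM^d_s$ with the compensated jump term $\big(e^{X_{\cdot^-}}(e^{U}-1-U)\big).\widetilde\mu$ to obtain
$$ e^{X_t}=e^{X_0}+\int_0^t e^{X_{s^-}}\,dM^c_s+\big(e^{X_{\cdot^-}}(e^{U}-1)\big).\widetilde\mu_t+\int_0^t e^{X_{s^-}}\Big(-dV_s+\tfrac12 d\langle M^c\rangle_s+dj_s(\Delta M^d)\Big). $$
The first two terms form a local martingale, since $e^{X_{\cdot^-}}$ is predictable and locally bounded and both $M^c$ and $(e^U-1).\widetilde\mu$ are local martingales; the last term, an integral of the strictly positive predictable integrand $e^{X_{s^-}}$ against $-dV+\tfrac12 d\langle M^c\rangle+dj(\Delta M^d)$, is nondecreasing exactly when $dV<\!\!<\tfrac12 d\langle M^c\rangle+dj(\Delta M^d)$, i.e. when the upper half of ${\cal Q}(0,0,1)$ holds. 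The same computation for $\exp(-X)$ (now $\Delta(-X)=-\Delta M^d$, and the compensated jump term produces $dj(-\Delta M^d)$) shows $\exp(-X)$ is a local submartingale iff $-\tfrac12 d\langle M^c\rangle-dj(-\Delta M^d)<\!\!< dV$, the lower half of ${\cal Q}(0,0,1)$.

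\textbf{Submartingales $\Rightarrow$ structure condition.} Now let $X$ be l\`adl\`ag with $Z:=\exp(X)$, $Z':=\exp(-X)$ local submartingales. Both are positive c\`adl\`ag semimartingales with strictly positive left limits (as $X$ has finite left limits), so $X=\ln Z$ is a c\`adl\`ag semimartingale, which already yields the last assertion. Applying the multiplicative decomposition to $Z$ gives $Z_t=Z_0\exp(A^1_t)\,{\cal E}(M^1_t)$ with $A^1$ predictable increasing, $A^1_0=0$, $M^1$ a local martingale, $\Delta M^1>-1$; taking logarithms via \eqref{Doleans} and setting $U^1:=\ln(1+\Delta M^1)$,
$$ X_t=X_0+A^1_t+M^1_t-\tfrac12\langle (M^1)^c\rangle_t-\sum_{s\le t}\big(e^{U^1_s}-U^1_s-1\big). $$
Compensating the jump sum by $(e^{U^1}-U^1-1).\nu$ (continuous, by the Remark) and invoking quasi-left-continuity so that the predictable finite-variation part of $X$ is continuous, I read off the canonical decomposition $X=X_0-V+M$ with $M^c=(M^1)^c$, $M^d=U.\widetilde\mu$ where $U=U^1$, $V=\tfrac12\langle M^c\rangle+j(\Delta M^d)-A^1$, and $M^c+(e^U-1).\widetilde\mu=M^1$ a local martingale; since $A^1$ is nondecreasing, $V<\!\!<\tfrac12\langle M^c\rangle+j(\Delta M^d)$. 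Running the same argument on $Z'$, with $Z'_t=Z'_0\exp(A^2_t){\cal E}(M^2_t)$, and matching the two canonical decompositions by uniqueness --- which forces $(M^2)^c=-M^c$ and $\ln(1+\Delta M^2)=-U$, hence $-M^c+(e^{-U}-1).\widetilde\mu=M^2$ a local martingale and the associated compensator equal to $j(-\Delta M^d)$ --- gives $V=A^2-\tfrac12\langle M^c\rangle-j(-\Delta M^d)$ and therefore $-\tfrac12\langle M^c\rangle-j(-\Delta M^d)<\!\!< V$. The two estimates are precisely ${\cal Q}(0,0,1)$.

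\textbf{Where the difficulty lies.} The forward implication is routine bookkeeping with It\^o's formula; the substance is in the converse. The delicate points there are: (i) upgrading the bare hypothesis ``$\exp(\pm X)$ are local submartingales'' to the existence of a canonical semimartingale decomposition of $X$, which is exactly what the Meyer--Yoeurp multiplicative decomposition provides; (ii) checking that the predictable finite-variation part so produced is continuous, which is where quasi-left-continuity genuinely enters; and (iii) verifying that the decompositions extracted from $\exp(X)$ and from $\exp(-X)$ are mutually consistent (same continuous martingale part up to sign, same jump field $U$), so that the two one-sided bounds truly refer to the same $V$, $M^c$, $M^d$. I expect (iii), the compatibility bookkeeping via uniqueness of the canonical decomposition, to be the most delicate step.
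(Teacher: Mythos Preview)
Your proposal is correct and follows the paper's approach: It\^o's formula for the forward direction, Meyer--Yoeurp multiplicative decomposition for the converse, and identification of the two resulting expressions for $X$ via uniqueness of the canonical decomposition. The one place you diverge is the final bookkeeping in the converse: after matching, the paper sums the two increasing parts to obtain $\bar A+\underline A=\langle M^c\rangle+j(\Delta M^d)+j(-\Delta M^d)$, and then applies Radon--Nikodym to write $d\bar A=\tfrac{\alpha}{2}\,d(\bar A+\underline A)$ with a predictable $0\le\alpha\le2$, reading off the structure condition from the explicit form of $dV$; you instead read each one-sided $<\!\!<$ inequality directly from $A^1$ (resp.\ $A^2$) being increasing, which is a legitimate and slightly cleaner shortcut.

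One point to tighten: your phrase ``invoking quasi-left-continuity so that the predictable finite-variation part of $X$ is continuous'' does not by itself yield continuity of $A^1$, and hence of $V$, which is part of the definition of a ${\cal Q}(0,0,1)$-local semimartingale. Quasi-left-continuity of the ambient random measure only gives continuity of $\nu$ and therefore of the compensator $j(\cdot)$; it says nothing about $A^1$ a priori. The paper closes this exactly via the sum identity above: since $\bar A,\underline A$ are increasing and their sum is continuous, each is continuous. You already have both decompositions and their matching, so this step is available to you---just make it explicit.
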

\begin{proof} Let consider a ${\cal Q}(0,0,1)$- local semimartingale $X$ with canonical decomposition $X=X_0-V+M$ where $V$ is the finite variation part of $X$ (continuous) and $M$ is a local martingale, then there exists $U\in {\cal G}_{loc}(\mu)$ such that $M=M^c+U.\widetilde \mu$. Applying Ito' s formula, we find the decomposition of $\bar Z=\exp(X)$:
$$d \bar Z_t=\bar Z_{t^-}\left[dM^c_t+\int_E (e^{U(t,x)}-1).\widetilde \mu(dt,dx)-dV_t+{1\over 2}d\langle M^c\rangle_t+\int_E (e^{U(t,x)}-U(t,x)-1)\nu(dt,dx)\right]$$
\noindent Since $X$ is a ${\cal Q}(0,0,1)$- semimartingale then $A=-V+{1\over 2}\langle M^c\rangle+(e^U-U-1).\nu$ is an increasing continuous predictable process. Therefore the process $Z=\exp(X)$ is a positive local submartingale and satisfies the following Meyer and Yoeurp multiplicative decomposition:
$$\exp(X_t-X_0)=\exp(A_t){\cal E}( M^c_t+(e^U-1).\widetilde \mu_t),\quad t\ge 0.$$
\noindent  We use the same arguments to prove that $\exp(-X)$ is a local positive submartingale. Let now assume that $\exp(X)$ and $\exp(-X)$ are local submartingales where $X$ is a l\`adl\`ag process. Using Meyer and Yoeurp multiplicative decomposition, there exist local martingales $\bar M, \underline{M}$ and increasing predictable processes $\bar A,\underline{A}$ such that $\exp(X_t-X_0)=\exp(\bar A_t){\cal E}(\bar M_t)$ and $\exp(-X_t+X_0)=\exp(\underline{A}_t){\cal E}(\underline{M}_t)$. Using the representation martingale Theorem, there exist $\bar U, \underline{U}\in{\cal G}_{loc}(\mu)$ and continuous local martingales $\bar M^c, \underline{M}^c$ such that $\bar M=\bar M^c+(e^{\bar U}-1).\widetilde \mu$ and $\underline{M}=\underline{M}^c+(e^{\underline {U}}-1).\widetilde \mu.$ Hence we find $\exp(X-X_0)=\exp(\bar A)\exp(r(\bar M)$ and $\exp(-X+X_0)=\exp(\underline{ A})\exp(r(\underline{M})$ and we get
$$X_t-X_0=\bar A_t+\bar M_t-{1\over 2}\langle \bar M^c\rangle-(e^{\bar U}-\bar U-1).\nu_t  \hbox{  and  } -X_t+X_0=\underline {A}_t+\underline{M}_t-{1\over 2}\langle \underline{M}^c\rangle-(e^{\underline{U}}-\underline{U}-1).\widetilde \nu_t.$$
 Using the uniqueness of the representation of the semimartingale $X$, we deduce, $\underline{M}=-\bar M$, then we find $\bar A_t+\underline{A}_t=\langle \bar M^c\rangle_t+(e^{\bar U}-\bar U-1).\nu_t+(e^{-\bar U}+\bar U-1).\nu_t$ . The process $\bar A$ and $\underline A$ are continuous, moreover from Radon Nikodym's Theorem, there exists a predictable process with $0\le \alpha_t\le 2$ such that $d\bar A_t={\alpha_t\over 2}d\left[\langle \bar M^c\rangle_t+(e^{\bar U}-\bar U-1).\nu_t+(e^{-\bar U}+\bar U-1).\nu_t\right]$. Therefore the process $X$ satisfies the dynamics $dX_t=dM_t-dV_t$ where:
$$dV_t={(1-\alpha_t)\over 2}d\langle M^c\rangle_t+{(2-\alpha_t)\over 2}d\left[(e^{\bar U}-\bar U-1).\nu_t\right] -{\alpha_t\over 2}d\left[(e^{-\bar U}+\bar U-1).\nu_t\right]$$  
\noindent Since $0\le \alpha_t\le 2$, the local semimartingale $X$ satisfies the structure condition  ${\cal Q}(0,0,1)$. Moreover the finite variation part $V$ of $X$ is a predictable continuous process. We deduce $X$ is ${\cal Q}(0,0,1)$- local semimartingale and that all jumps of $X$ come from the local  martingale part which is c\`adl\`ag process.  
\end{proof}
\begin{Definition} Let consider a local semimartingale $X$, if $\exp(X)$ is a local submartingale then $X$ is called ${\cal Q}$- local submartingale. 
\end{Definition}
From Theorem \ref{expsubmartingale}, any ${\cal Q}(0,0,1)$- local semimartingale is a ${\cal Q}$- local submartingale and the reverse holds true.
\subsubsection{The entropic submartingales}
We are interested to find uniform integrability condition for ${\cal Q}(0,0,1)$- local semimartingales. Since ${\cal Q}(0,0,1)$- local semimartingales are ${\cal Q}$- local submartingales, we use the same technics developped for standard local submartingales. We recall that to prove $X\in {\cal D}_{exp}$, it is sufficient to prove there exists a positive martingale $L\in{\cal D}$ such that  $\exp(X)\le L$. To construct the positiive martingale $L$, let first give some useful definitions.
\begin{Definition} A process $X \in {\cal D}_{exp}$ is called an entropic submartingale if for any stopping times $\sigma\le \tau$:
$$X_\sigma\le \bar\rho_\sigma(X_\tau),\quad \sigma\le \tau.$$
\noindent where $\bar \rho$ stands for the usual entropic risk measure defined above. In the same point of view, $X$ is called a entropic supermartingale if $-X$ is an entropic submartingale. If $X$ and $-X$ are entropic submartingales, $X$ is called entropic quasi-martingale.
\end{Definition}   
\begin{Theorem}\label{uniformQexp} Let $T>0$ the fixed horizon time and consider a semimartingale $X=X_0-V+M^c+U.\widetilde \mu$  such that $\exp(|X_T|)\in L^1$ then $X$ is a ${\cal Q}(0,0,1)$-semimartingale $\in {\cal D}_{exp}$  if and only if $X$ and $-X$ are entropic submartingales.Moreover, in all cases the martingales $M^c+(e^{U}-1).\widetilde \mu$ and $-M^c+(e^{-U}-1).\widetilde \mu$ belong to ${\cal U}_{exp}$.
\end{Theorem}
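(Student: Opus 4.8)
The plan is to prove the two implications of the equivalence separately, extracting the ``moreover'' clause from the martingale bounds produced in both. For the easy direction, assume $X$ and $-X$ are entropic submartingales. Reading off the definition of $\bar\rho$, the inequality $X_\sigma\le\bar\rho_\sigma(X_\tau)$ for stopping times $\sigma\le\tau$ is exactly $\exp(X_\sigma)\le\mathbb{E}\big[\exp(X_\tau)\mid{\cal F}_\sigma\big]$, so $\exp(X)$ satisfies the submartingale inequality at all stopping times and in particular is a submartingale; choosing $\tau=T$ and using $\exp(X_T)\le\exp(|X_T|)\in L^1$, it is dominated by the uniformly integrable martingale $L_t:=\mathbb{E}\big[\exp(X_T)\mid{\cal F}_t\big]$, hence of class $(D)$, so that $\exp(X)\in{\cal D}$ and $X\in{\cal D}_{\exp}$. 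Running the same argument with $-X$ shows $\exp(-X)$ is a class $(D)$ submartingale. Both $\exp(X)$ and $\exp(-X)$ being (local) submartingales, Theorem \ref{expsubmartingale} gives that $X$ is a ${\cal Q}(0,0,1)$-semimartingale.

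For the converse, assume $X\in{\cal Q}(0,0,1)$ with $\exp(X)\in{\cal D}$. By Theorem \ref{expsubmartingale}, $\exp(X)$ and $\exp(-X)$ are local submartingales; a local submartingale of class $(D)$ is a true submartingale, so $\exp(X)$ is a submartingale of class $(D)$ and the optional sampling theorem gives $\exp(X_\sigma)\le\mathbb{E}\big[\exp(X_\tau)\mid{\cal F}_\sigma\big]$ for $\sigma\le\tau$, i.e. $X_\sigma\le\bar\rho_\sigma(X_\tau)$: $X$ is an entropic submartingale. The substantial point is to show $\exp(-X)\in{\cal D}$, so that $-X$ too is an entropic submartingale. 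For this I would reuse the Meyer--Yoeurp multiplicative decompositions produced in the proof of Theorem \ref{expsubmartingale}: $\exp(X-X_0)=\exp(\bar A)\,{\cal E}(\bar M)$ and $\exp(-X+X_0)=\exp(\underline{A})\,{\cal E}(\underline{M})$ with $\bar A,\underline{A}$ continuous increasing predictable, $\bar A_0=\underline{A}_0=0$, $\bar M=M^c+(e^{U}-1).\widetilde\mu$ and $\underline{M}=-M^c+(e^{-U}-1).\widetilde\mu$, together with the identity $\bar A_t+\underline{A}_t=\langle M^c\rangle_t+j_t(\Delta M^d_t)+j_t(-\Delta M^d_t)$ that came out of that proof. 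From $\exp(X)\in{\cal D}$ and $\bar A\ge0$ one gets ${\cal E}(\bar M)\le L\,e^{-X_0}$, so ${\cal E}(\bar M)$ is a positive local martingale dominated by a uniformly integrable martingale, hence itself a uniformly integrable martingale. One then argues, using the uniform integrability of ${\cal E}(\bar M)$, the bound $\exp(\underline{A}_T)\le\exp(\bar A_T+\underline{A}_T)$ with the right-hand exponent written explicitly through the identity above, the hypothesis $\exp(|X_T|)\in L^1$, and the Lepingle--M\'emin estimate \eqref{UnifInt} applied to $\underline{M}$, that $\exp(-X)=e^{-X_0}\exp(\underline{A})\,{\cal E}(\underline{M})$ is dominated by a uniformly integrable martingale; whence $\exp(-X)\in{\cal D}$ and $-X$ is an entropic submartingale.

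The ``moreover'' is then immediate: in each direction we have exhibited uniformly integrable martingales dominating ${\cal E}(\bar M)$ and ${\cal E}(\underline{M})$, and a positive local martingale dominated by a uniformly integrable martingale is of class $(D)$ and hence a uniformly integrable martingale, so $M^c+(e^{U}-1).\widetilde\mu$ and $-M^c+(e^{-U}-1).\widetilde\mu$ belong to ${\cal U}_{\exp}$. I expect the main obstacle to be exactly the step $\exp(-X)\in{\cal D}$ in the converse direction: the class $(D)$ hypothesis on $\exp(X)$ is not symmetric in $X$ and says nothing directly about $\exp(-X)$, and a positive local submartingale need not be a genuine submartingale, so one has to use the structure condition ${\cal Q}(0,0,1)$ in an essential way --- through the explicit control $\underline{A}\le\bar A+\underline{A}=\langle M^c\rangle+j(\Delta M^d)+j(-\Delta M^d)$ and the estimate \eqref{UnifInt}, together with $\exp(|X_T|)\in L^1$ --- to upgrade $\exp(-X)$ to a class $(D)$ submartingale. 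Everything else reduces to optional sampling and the standard fact that a local martingale of class $(D)$ is uniformly integrable.
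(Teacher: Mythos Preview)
Your treatment of the ``easy'' direction (entropic submartingales imply ${\cal Q}(0,0,1)\cap{\cal D}_{\exp}$) and of the ``moreover'' clause via the Meyer--Yoeurp bound ${\cal E}(\bar M)\le e^{-X_0}\exp(X)$, ${\cal E}(\underline M)\le e^{X_0}\exp(-X)$ matches the paper's argument essentially line for line.

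For the forward direction the paper is far less elaborate than you are: it simply asserts the two inequalities
\[
\exp(X_t)\le \mathbb{E}\bigl[\exp(X_T)\mid{\cal F}_t\bigr],\qquad
\exp(-X_t)\le \mathbb{E}\bigl[\exp(-X_T)\mid{\cal F}_t\bigr],
\]
and reads off the entropic submartingale property for $X$ and $-X$. In other words, the paper treats both $\exp(X)$ and $\exp(-X)$ as genuine class~$(D)$ submartingales from the outset and does not address the asymmetry you point out (the stated hypothesis only gives $\exp(X)\in{\cal D}$, not $\exp(-X)\in{\cal D}$). So the issue you single out as ``the main obstacle'' is precisely a step the paper glosses over.

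That said, your proposed resolution of this step is not convincing as written. You invoke ``the uniform integrability of ${\cal E}(\bar M)$, the bound $\exp(\underline A_T)\le\exp(\bar A_T+\underline A_T)$, the hypothesis $\exp(|X_T|)\in L^1$, and the Lepingle--M\'emin estimate applied to $\underline M$'' to conclude that $\exp(-X)$ is dominated by a uniformly integrable martingale, but none of these ingredients assemble into an actual argument: uniform integrability of ${\cal E}(\bar M)$ does not give integrability of $\exp(\langle M^c\rangle_T+j_T(\Delta M^d)+j_T(-\Delta M^d))$, which is what you would need to control $\exp(\underline A_T)$; and the Lepingle--M\'emin criterion~\eqref{UnifInt} for $\underline M$ requires exactly this kind of exponential moment on the brackets, which you have not produced. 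Either you need a different route to $\exp(-X)\in{\cal D}$ (or a clean argument that a positive local submartingale with integrable terminal value is here a true submartingale), or---more in the spirit of how the paper actually proceeds---you should read the hypothesis ``$X\in{\cal D}_{\exp}$'' symmetrically (i.e.\ as $|X|\in{\cal D}_{\exp}$), in which case both $\exp(\pm X)$ are class~$(D)$ local submartingales and the forward direction is as immediate as the paper suggests.
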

\begin{proof} Let consider a ${\cal Q}(0,0,1)$-semimartingale $X=X_0+M^c+U.\widetilde\mu-V\in{\cal D}_{exp}$ such that $\exp(|X_T|)\in L^1$. Since $X$ is ${\cal Q}$- submartingale we find:
$$\exp(X_t)\le \mathbb{E}\left[\exp(X_T)\vert {\cal F}_t\right]\in {\cal D} \hbox{ and } \exp(-X_t)\le \mathbb{E}\left[\exp(-X_T)\vert {\cal F}_t\right]\in {\cal D}$$
\noindent and for any stopping times: $\sigma\le \tau\le T:$ $$X_\sigma\le \ln\left(\mathbb{E}\left[\exp(X_\tau)\vert {\cal F}_\sigma\right]\right)=\bar\rho_\sigma(X_\tau) \hbox{ and }-X_\sigma\le \ln\left(\mathbb{E}\left[\exp(-X_\tau)\vert {\cal F}_\sigma\right]\right)=\bar\rho_\sigma(-X_\tau).$$
\noindent then $X$ and $-X$ are entropic submartingales. Let prove the reverse, assume $X$ and $-X$ are entropic submartingales then for any stopping times $\sigma\le \tau$, $\exp(X_\sigma))\le \mathbb{E}\left[\exp(X_\tau)\vert {\cal F}_\sigma\right]$ and $\exp(-X_\sigma))\le \mathbb{E}\left[\exp(-X_\tau)\vert {\cal F}_\sigma\right]$, then $X$ is a uniformly integrable ${\cal Q}$-submartingale and from Theorem \ref{expsubmartingale}, $X$ is a ${\cal Q}(0,0,1)$-semimartingale. Since $X$ and $-X$ belong to ${\cal D}_{exp}$ then for a fixed horizon time $T$, $\exp(X_T)$ and $\exp(-X_T)$ belong to $L^1$ which lead to conclude $\exp(|X_T|)\in L^1$.  Moreover since $X$ and $-X$ are ${\cal Q}$-submartingales using Meyer-Yoeurp multiplicative decomposition Theorem, there exist increasing processes $\bar A$ and $\underline{A}$ ($\bar A_0=0$ and $\underline{A}_0=0)$)  such that:
$$\exp(X_t-X_0)=\exp(\bar A_t){\cal E}(M^c_t+(e^U-1).\widetilde \mu_t) \hbox{ and } \exp(-X_t+X_0)=\exp(\underline{A}_t){\cal E}(-M^c_t+(e^{-U}-1).\widetilde \mu_t).$$ \noindent Therefore we deduce that
${\cal E}(M^c_t+(e^U-1).\widetilde \mu)\le \exp(X_t-X_0) \hbox{ and } {\cal E}(-M^c_t+(e^{-U}-1).\widetilde \mu)\le \exp(-X_t+X_0).$
Since $|X-X_0| \in {\cal D}_{exp}$, we conclude the martingales $M^c+(e^U-1).\widetilde \mu$ and $-M^c+(e^{-U}-1).\widetilde \mu\in {\cal U}_{exp}$.        
\end{proof}
To conclude this part, we can make some links with the sublinear $g$-expectation of Peng \cite{PeXu05} since if we define the $g$-expectation of $X$ by $\mathbb{E}^g(X)$, we can define the submartingale under the $g$-expectation. Therefore, we deduce that if $X$ is ${\cal Q}(0,0,1)$-semimartingale such that $|X|\in{\cal D}_{exp}$, $X$ and $-X$ are submartingales under $\mathbb{E}^g=\ln\left[\mathbb{E}(\exp)\right]$.  
\subsection{ General class:${\cal Q}(\delta,\Lambda,C)$}
\subsubsection{The exponential transform}
We use some exponential transformations for general ${\cal Q}(\Lambda,C,\delta)$ local quadratic exponential semimartingale  such that the new tansformed process belong to the class ${\cal Q}(0,0,1)$. Therefore, we can apply the same methodology using in the previous sections to find general results for  ${\cal Q}(\Lambda,C,\delta)$ local semimartingales.
\begin{Proposition}\label{transform} Let consider a ${\cal Q}(\Lambda,C,\delta)$-local semimartingale $X=X_0-V+M^c+U.\widetilde\mu$ then
\begin{enumerate}
\item For any $\lambda\not=0$, the process $\lambda X$ is a ${\cal Q}(\Lambda,C,{\delta\over |\lambda|})$-local semimartingale and a ${\cal Q}(\lambda\Lambda,C,\delta)$-local semimartingale when $\lambda>1$.
\item Let define the two transformations:
$$Y^{\Lambda,C}(X)=X+\Lambda+|X|*C  \hbox{ and } \bar Y^{\Lambda,C}(|X|)=e^C|X|+e^C*\Lambda.$$
\noindent then the two processes $Y^{\Lambda,C}(\delta X)$ and  $\bar Y^{\Lambda,C}(|\delta X|)$ are ${\cal Q}$-local submartingales.
\item Exponential transformation: Let $U^{\Lambda,C}(X)$ the transformation
$$U^{\Lambda,C}_t(e^X)=e^{X_t}+\int_0^t e^{X_s}d\Lambda_s+\int_0^t e^{X_s}|X_s|dC_s.$$ 
then $U^{\Lambda,C}(e^{\delta X})$ is a positive local submartingale.
\end{enumerate}
\end{Proposition}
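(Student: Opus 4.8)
The plan is to treat the three assertions in order, using (1) to normalise the jump parameter $\delta$ to $1$ before any stochastic calculus, and then, for (2) and (3), applying the It\^o formula for discontinuous semimartingales to $\exp(\cdot)$ and reading the sign of the resulting drift off the structure condition ${\cal Q}(\Lambda,C,1)$. For (1) I would just track the canonical decomposition: $\lambda X$ has martingale parts $\lambda M^c$ and $(\lambda U).\widetilde\mu$, finite variation part $\lambda V$, $\langle(\lambda M)^c\rangle=\lambda^2\langle M^c\rangle$ and $j_t(\gamma\Delta(\lambda M)^d)=j_t(\gamma\lambda\,\Delta M^d)$. Multiplying the defining double inequality of ${\cal Q}(\Lambda,C,\delta)$ by $|\lambda|$ --- and, when $\lambda<0$, reversing $\ll$ and exchanging the two bounds --- matches term by term the ${\cal Q}(\Lambda,C,\delta/|\lambda|)$ inequality attached to $\lambda X$; moreover the two exponential-martingale requirements for the pair $(\lambda X,\delta/|\lambda|)$ are literally $\pm\delta M^c+(e^{\pm\delta U}-1).\widetilde\mu$ and so hold by hypothesis. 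For $\lambda>1$ I would then strengthen the parameter using the two monotonicity facts $\lambda\langle M^c\rangle\ll\lambda^2\langle M^c\rangle$ and $\lambda\,j_t(\delta\Delta M^d)\ll j_t(\delta\lambda\,\Delta M^d)$, the latter because $u\mapsto e^{\delta\lambda u}-\delta\lambda u-1-\lambda(e^{\delta u}-\delta u-1)$ vanishes together with its first derivative at $0$ and is convex; thus the ${\cal Q}(\Lambda,C,\delta/\lambda)$ bounds for $\lambda X$ are dominated by the ${\cal Q}(\lambda\Lambda,C,\delta)$ ones.

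For (2) I would use (1) with $\lambda=\delta$ to reduce to a ${\cal Q}(\Lambda,C,1)$-semimartingale $\tilde X=\tilde X_0-\tilde V+\tilde M^c+\tilde U.\widetilde\mu$, so that $Y^{\Lambda,C}(\delta X)=\tilde X+\Lambda+|\tilde X|*C$ and $\bar Y^{\Lambda,C}(|\delta X|)=e^C|\tilde X|+e^C*\Lambda$. For $Y:=Y^{\Lambda,C}(\tilde X)$ the argument mimics the proof of Theorem~\ref{expsubmartingale}: It\^o applied to $\exp(Y)$ exhibits it as a local martingale plus $\int_0^\cdot\exp(Y_{s^-})\big[-d\tilde V_s+d\Lambda_s+|\tilde X_s|\,dC_s+\tfrac12 d\langle\tilde M^c\rangle_s+dj_s(\Delta\tilde M^d)\big]$, and the bracket is an increasing process precisely by the upper ${\cal Q}(\Lambda,C,1)$-bound, so $\exp(Y^{\Lambda,C}(\delta X))$ is a local submartingale.

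The transform $\bar Y:=\bar Y^{\Lambda,C}(|\tilde X|)$ is the crux. I would write Tanaka's formula $|\tilde X|=|\tilde X_0|+\int\mathrm{sgn}(\tilde X_{s^-})\,d\tilde X_s+L+K$, where $L$ is the continuous local time of $\tilde X$ at $0$ and $K$ the increasing pure-jump process with increments $|\tilde X_s|-|\tilde X_{s^-}|-\mathrm{sgn}(\tilde X_{s^-})\Delta\tilde X_s\ge0$, integrate by parts against the continuous process $e^C$ (no bracket term), and apply It\^o to $\exp(\bar Y)$. With $\hat K$ the compensator of $K$ and $\hat\Phi$ the increasing compensator of the jump part of $\exp(\bar Y)$, It\^o represents $\exp(\bar Y)$ as a local martingale plus the finite variation process $\int_0^\cdot\exp(\bar Y_{s^-})\,dA^{\bar Y}_s+\tfrac12\int_0^\cdot\exp(\bar Y_{s^-})\,d\langle\bar Y^c\rangle_s+\hat\Phi$, where $dA^{\bar Y}_t=e^{C_t}\big(-\mathrm{sgn}(\tilde X_{t^-})\,d\tilde V_t+|\tilde X_t|\,dC_t+d\Lambda_t+dL_t+d\hat K_t\big)$ and $d\langle\bar Y^c\rangle_t=e^{2C_t}\,d\langle\tilde M^c\rangle_t$ off $\{\tilde X_{t^-}=0\}$ (on $\{\tilde X_{t^-}=0\}$ all these terms are already nonnegative). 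Bounding $-\mathrm{sgn}(\tilde X_{t^-})\,d\tilde V_t$ by the upper ${\cal Q}$-bound on $\{\tilde X_{t^-}>0\}$ and by the lower ${\cal Q}$-bound on $\{\tilde X_{t^-}<0\}$, the $d\Lambda$ and $|\tilde X|\,dC$ contributions cancel and the above finite variation process dominates $\int_0^\cdot\exp(\bar Y_{s^-})\big[\tfrac12(e^{2C_s}-e^{C_s})\,d\langle\tilde M^c\rangle_s+e^{C_s}\,dL_s+e^{C_s}\big(d\hat K_s-dj_s(\mathrm{sgn}(\tilde X_{s^-})\Delta\tilde M^d)\big)\big]+\hat\Phi$. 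The first two summands are nonnegative because $e^{2C}\ge e^C$ --- this is exactly where the $e^C$-weighting recovers the $\tfrac12 d\langle\tilde M^c\rangle$ lost to the sign flip --- and the remaining two combine, after dividing by $\exp(\bar Y_{s^-})$ and comparing $\nu$-integrands, into the pointwise inequality $e^{e^{C_t}\Delta}+e^{C_t}\ge1+e^{C_t+\mathrm{sgn}(\tilde X_{t^-})\tilde U}$, with $\Delta:=|\tilde X_{t^-}+\tilde U|-|\tilde X_{t^-}|$. Since $\Delta\ge\mathrm{sgn}(\tilde X_{t^-})\tilde U$ and $v\mapsto e^{e^cv}$ is increasing, this follows from the scalar inequality $e^{e^cv}+e^c\ge1+e^{c+v}$ valid for all $c\ge0$, $v\in\R$: viewed as a function of $c$ it vanishes at $c=0$ and has $c$-derivative $e^c(ve^{e^cv}-e^v+1)$, which is nonnegative because $ve^{e^cv}-e^v+1$ increases in $c$ and equals $e^v(v-1)+1\ge0$ at $c=0$. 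Hence the finite variation process is increasing and $\bar Y^{\Lambda,C}(|\delta X|)$ is a ${\cal Q}$-local submartingale.

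For (3), with $\tilde X=\delta X$ a ${\cal Q}(\Lambda,C,1)$-semimartingale, It\^o gives $d(e^{\tilde X_t})=e^{\tilde X_{t^-}}\,dN_t+e^{\tilde X_{t^-}}\big(-d\tilde V_t+\tfrac12 d\langle\tilde M^c\rangle_t+dj_t(\Delta\tilde M^d)\big)$ for a local martingale $N$; the upper ${\cal Q}(\Lambda,C,1)$-bound makes the parenthesis $\gg-d\Lambda_t-|\tilde X_t|\,dC_t$, so $d(e^{\tilde X_t})+e^{\tilde X_{t^-}}\,d\Lambda_t+e^{\tilde X_{t^-}}|\tilde X_t|\,dC_t$ is a local martingale plus an increasing process, and --- $\Lambda$ and $C$ being continuous --- $e^{\tilde X_{t^-}}$ may be replaced by $e^{\tilde X_t}$ in the last two integrals, which is precisely $dU^{\Lambda,C}_t(e^{\delta X})$; positivity of $U^{\Lambda,C}(e^{\delta X})$ is immediate. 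I expect the only genuine obstacle to be the $\bar Y^{\Lambda,C}$ part of (2): pushing the Tanaka decomposition (local time and convex jump corrections) through the exponential and establishing the jump inequality that forces the drift to be nonnegative --- items (1) and (3), and the $Y^{\Lambda,C}$ part of (2), are routine manipulations once the structure condition is unwound.
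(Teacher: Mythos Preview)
Your proposal is essentially the paper's proof: rescale via (1), then for (2) and (3) apply It\^o/Tanaka--Meyer and read the sign of the drift from the structure condition. Two comments.

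First, a slip in (1): the function $u\mapsto e^{\delta\lambda u}-\delta\lambda u-1-\lambda(e^{\delta u}-\delta u-1)$ is \emph{not} convex---its second derivative $\delta^2\lambda(\lambda e^{\delta\lambda u}-e^{\delta u})$ becomes negative for $u<-\tfrac{\ln\lambda}{\delta(\lambda-1)}$. The conclusion (nonnegativity) is still correct, but the right argument is the paper's Appendix Lemma: the first derivative $\delta\lambda(e^{\delta\lambda u}-e^{\delta u})$ changes sign only at $u=0$, which is therefore the global minimum.

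Second, for $\bar Y^{\Lambda,C}$ the paper organises the jump step differently. Rather than proving your single scalar inequality $e^{e^cv}+e^c\ge 1+e^{c+v}$ on the drift of $\exp(\bar Y)$, it first casts $\bar Y$ itself as a canonical ${\cal Q}$-semimartingale plus an increasing process, and controls the jump contribution through the two separate inequalities $j(k\Delta M)\ge kj(\Delta M)$ with $k=e^C\ge1$ (the Appendix Lemma again) and $|y+u|-|y|\ge\mathrm{sgn}(y)u$. Both routes are valid; yours handles the drift of $\exp(\bar Y)$ in one pass, while the paper's modular inequalities are the ones reused later in the stability and existence sections.
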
  
\begin{proof}
\begin{enumerate}
\item Let consider a ${\cal Q}(\Lambda,C,\delta)$-local semimartingale $X=X_0-V+M^c+M^d$ (where $M^d=U.\widetilde \mu$) and consider $\lambda \not=0$, hence $\lambda X=\lambda X_0 -\lambda V+\lambda M^c +\lambda M^d$ and  $\lambda X$ satisfies the condition
\begin{equation*}
 \left\lbrace
\begin{split}
& -|\lambda|{\delta\over 2}d\langle M^c\rangle_t-{|\lambda|\over \delta}d\Lambda_t-|\lambda X_t|.dC_t-|\lambda|{1\over \delta}dj_t[-\delta \hbox{ sign }(\lambda)\Delta M^d_t]<\!\!<\lambda dV_t,\\
& \lambda dV_t<\!\!< |\lambda|{\delta\over 2}d\langle M^c\rangle_t+{|\lambda|\over \delta} d\Lambda_t+|\lambda X_t|.dC_t+|\lambda|{1\over \delta}dj_t[\delta  \hbox {sign} (\lambda) \Delta M^d_t].
\end{split}
\right.
\end{equation*}
 Since $j(\delta\Delta M^d)=j[{\delta\over \lambda}(\lambda \Delta M^d]$ then we find
\begin{equation*}
 \left\lbrace
\begin{split}
& -{\delta\over |\lambda|}{1\over 2}d\langle \lambda M^c\rangle_t-{|\lambda|\over \delta}d\Lambda_t-|\lambda X_t|.dC_t-{|\lambda|\over \delta}dj_t[-{\delta\over |\lambda|}(\lambda \Delta M^d_t)]<\!\!<\lambda dV_t,\\
& \lambda dV_t<\!\!< {\delta \over |\lambda|}{1\over 2}d\langle \lambda M^c\rangle_t+{|\lambda|\over \delta} d\Lambda_t+|\lambda X_t|.dC_t+{|\lambda|\over \delta}dj_t[{\delta\over |\lambda|}(\lambda \Delta M^d_t)].
\end{split}
\right.
\end{equation*}

\noindent then $\lambda X$ is a ${\cal Q}(\Lambda,C,{\delta\over |\lambda|})$-local semimartingale. Moreover for $\lambda>1$: 
$${\delta \over |\lambda|}{1\over 2}d\langle \lambda M^c\rangle_t<\!\!< {\delta\over 2}d\langle M^c\rangle_t \hbox{ and  }{|\lambda|\over \delta}j_t[{\delta\over |\lambda|}(\lambda \Delta M^d_t)]<\!\!< {1\over \delta}j_t[\delta (\lambda \Delta M^d)]$$
 see Lemma \ref{ineqsaut} in Appendix for more details for this inequality. We find that for $\lambda>1$, $\lambda X$ is a ${\cal Q}(|\lambda|\Lambda,C,\delta)$-semimartingale.
\item Let consider the $Y^{\Lambda,C}(\delta X)=\delta X_0+\widetilde M_t-\widetilde V_t$, where $\widetilde M$ is the local martingale part given by $\widetilde M=\delta M^c+\delta M^d$ and $\widetilde V$ the finite variation part given by $\widetilde V=\delta V-\Lambda-|\delta X|*C$ . Since $X$ is ${\cal Q}(\Lambda,C,\delta)$-local semimartingale we have $d\widetilde V_t<\!\!< dj_t(\delta \Delta M^d)+{\delta^2\over 2}d\langle M^c\rangle_t$. We conclude $d\widetilde V_t<\!\!<dj_t(\Delta \widetilde M^d)+{1\over 2}d\langle \widetilde M^c\rangle_t$ and the process $A$ defined by $dA_t=-d\widetilde V_t+dj_t(\Delta \widetilde M^d)+{1\over 2}d\langle \widetilde M^c\rangle_t$ is an increasing process and $Y^{\Lambda,C}(\delta X)=\delta X_0+\widetilde M-{1\over 2}\langle \widetilde M^c\rangle-j(\Delta \widetilde M^d)+A$ then we conclude $\exp(Y^{\Lambda,C}(\delta X))$ is a local submartingale then it is ${\cal Q}$-local submartingale. Let prove now the process $\bar Y^{\Lambda,C}(|X|)$ belong to  the ${\cal Q}(0,0,1)$-class, let applying the Meyer-Ito's formula, we find the decomposition: $$de^{C_t}|X_t|=e^{C_t}\left[|X_t|dC_t-\hbox{sign}(X_{t^-})dV_t+dL^X_t+d\left[(|X_{-}+U|-|X_{^-}|).\nu_t\right]+d\bar M_t\right]$$ \noindent where $d\bar M_t=\hbox{sign}(X_{t^-})dM^c_t+d\left[(|X_{-}+U|-|X_{^-}|).\widetilde \mu_t\right]$ and $L^X$ stands for the local time of $X$ at $0$. Therefore the decomposition of the semimartingale $\bar Y^{\Lambda,C}([\delta X|)$ satisfies $d\bar Y^{\Lambda,C}(|X|)=-d\widetilde V_t+d\widetilde M_t$ where  $\widetilde M=\delta \bar M$ and $$d \widetilde  V_t=-e^{C_t}\left[|\delta X_t|dC_t+d\Lambda_t-\delta\hbox{sign}(X_{t^-})dV_t+dL^{\delta X}_t+d\left[\delta(|X_{-}+U|-|X_{^-}|).\nu_t\right]\right].$$
\noindent Since the process $X$ is a ${\cal Q}(\Lambda,C,\delta)$-local semimartingale, the process $A$ defined by $dA_t=\delta(|X_t|dC_t+{1\over \delta}d\Lambda_t-\hbox{sign}(X_{t^-})dV_t+{\delta\over 2}d\langle M^c\rangle_t+{1\over \delta}dj_t[\hbox{sign}(X_{t^-})]\delta |\Delta M^d|)+{1\over \delta}dL^{\delta X}_t$) is an increasing process. Therefore we get:
\begin{equation*}
\begin{split}
-d\widetilde V_t= e^{C_t}\left[-{\delta^2\over 2}d\langle M^c\rangle_t-dj_t[\delta \hbox{sign}(X_{t^-})\Delta M_t]+d\left(\delta(|X_{-}+U|-|X_{^-}|).\nu_t\right).\right]
\end{split}
\end{equation*}
From Lemma \ref{ineqsaut} (see Appendix for details), for any $k\ge 1$, $j(k \Delta M)\ge kj(\Delta M)$, therefore since $C$ is an increasing process with the initial condition $C_0=0$, we get $j_s[\delta e^{C_s}\hbox{sign}(X_{s^-})\Delta M_s]-e^{C_s}j_s[\delta \hbox{sign}(X_{s^-})\Delta M_s]\ge 0$. Moreover for any $s\ge 0$, ${\delta^2\over 2}\langle e^{C_s}M^c\rangle_s-{\delta^2\over 2}e^{C_s}\langle M^c\rangle_s \ge 0$, then   we obtain:
$$-d\widetilde V_t=-{1\over 2}d\langle e^{C_t}\delta\hbox{sign}(X_{t^-})M^c\rangle_s-dj_t[\delta e^{C_t}\hbox{sign}(X_{t^-})\Delta M_t]+d\left(\delta(|X_{-}+U|-|X_{^-}|).\nu_t\right)+d\bar A_t$$
\noindent where $\bar A$ is an increasing process.Finally we get:
\begin{equation*}
\begin{split}
d\bar Y^{\Lambda,C}_t(|X|)&=e^{C_t}\delta\hbox{sign}(X_{t^-})dM^c_t+\int_{E}e^{C_t}\delta(|X_{t^-}+U(t,x)|-|X_{t^-}|)\widetilde \mu(dt,dx)\\&
-{1\over 2}d\langle e^{C_t}\delta\hbox{sign}(X_{t^-})M^c\rangle_t-j_t[\delta e^{C_t}(|X_{t^-}+\Delta M_t|-|X_{t^-}|)]
+d\widetilde A_t
\end{split}
\end{equation*}
\noindent where
$$\widetilde A_t=\bar A_t+\int_0^t\int_{E}\left[\exp\left(e^{C_s}\delta(|X_{s^-}+U(s,x)|-|X_{s^-}|)\right)-\exp\left(e^{C_s}\hbox{sign}(\delta X_{s^-})U(s,x)\right)\right]\nu(ds,dx)$$
\noindent Since $|y+u|-|y|\ge \hbox{sign}(y)u$ we deduce $\widetilde A$ is increasing then we get: 
$$\bar Y^{\Lambda,C}(|X|)=|\delta X_0|+\widetilde M-{1\over 2}\langle \widetilde M\rangle-j(\Delta \widetilde M)+\widetilde A.$$
\noindent Therefore, $\exp(\bar X^{\Lambda,C})$ is a local submartingale then it is ${\cal Q}$-local submartingale.
\item Let apply Ito's formula to find the decomposition of $U^{\Lambda,C}(e^{\delta X})$:
$$dU^{\Lambda,C}_t(e^{\delta X})=e^{\delta X_{t^-}}\left[\delta dM^c_t+d[(e^{\delta U}-1).\widetilde \mu_t]-\delta dV_t+{\delta^2\over 2}d\langle M^c\rangle_t+dj_t(\delta \Delta M_t)+|\delta X_t|dC_t\right].$$
\end{enumerate}
\noindent Since $X$ is ${\cal Q}(\Lambda,C,\delta)$-local semimartingale then the process $A$ defined by 
$dA_t=-dV_t+{\delta\over 2}d\langle M^c\rangle_t+{1\over\delta}dj_t(\delta\Delta M_t)+|\delta X_t|dC_t$ is an increasing process, we deduce the process $U^{\Lambda,C}(e^{\delta X})$ is a positive local submartingale. 
\end{proof}
\begin{Theorem}\label{generalsubmartingale} Let $X$ a l\`adl\`ag optionnal process $X$. $X$ is a ${\cal Q}(\Lambda,C,\delta)$-local semimartingale if and only if $\exp\left[Y^{\Lambda,C}(\delta X)\right]$ and 
$\exp\left[Y^{\Lambda,C}(-\delta X)\right]$ are submartingales or equivently if the processes $U^{\Lambda,C}(e^{\delta X})$ and  $U^{\Lambda,C}(e^{-\delta X})$ are local submartingales. In all cases; $X$ is a c\`adl\`ag process.
\end{Theorem}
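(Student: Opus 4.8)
The plan is to reduce this general statement to the already-proven Theorem \ref{expsubmartingale} for the class ${\cal Q}(0,0,1)$, using the transformations studied in Proposition \ref{transform}. First I would establish the equivalence of the two reformulations, ``$\exp[Y^{\Lambda,C}(\pm\delta X)]$ submartingales'' versus ``$U^{\Lambda,C}(e^{\pm\delta X})$ local submartingales.'' Recall $Y^{\Lambda,C}(\delta X) = \delta X + \Lambda + |\delta X|*C$ while $U^{\Lambda,C}_t(e^{\delta X}) = e^{\delta X_t} + \int_0^t e^{\delta X_s}\,d\Lambda_s + \int_0^t e^{\delta X_s}|\delta X_s|\,dC_s$. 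Applying It\^o's formula to $\exp[Y^{\Lambda,C}(\delta X)]$ and comparing with the It\^o decomposition of $U^{\Lambda,C}(e^{\delta X})$ already computed in the proof of Proposition \ref{transform}, one sees that both processes have the same local martingale part (up to the positive predictable factor $e^{\delta X_{t^-}}$ times the relevant stochastic integrals) and that the finite-variation/drift parts differ only by terms that are manifestly increasing (coming from $d\Lambda$, $|\delta X|\,dC$, and the exponential Jensen-type gap); hence one is a local submartingale iff the other is. Since these are positive processes bounded below, ``local submartingale'' and ``submartingale after localization'' agree in the relevant sense, giving the ``equivalently'' clause.

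Next, for the forward implication, suppose $X$ is a ${\cal Q}(\Lambda,C,\delta)$-local semimartingale. By part (2) of Proposition \ref{transform}, $Y^{\Lambda,C}(\delta X)$ is a ${\cal Q}$-local submartingale, i.e.\ $\exp[Y^{\Lambda,C}(\delta X)]$ is a local submartingale; applying the same to $-X$ (which is also ${\cal Q}(\Lambda,C,\delta)$ by symmetry of the structure condition, or by part (1) with $\lambda=-1$) gives that $\exp[Y^{\Lambda,C}(-\delta X)]$ is a local submartingale. Equivalently, by part (3) of Proposition \ref{transform}, $U^{\Lambda,C}(e^{\pm\delta X})$ are positive local submartingales. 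The c\`adl\`ag conclusion follows because, as in Theorem \ref{expsubmartingale}, once we know the process is a semimartingale with continuous predictable finite-variation part, all its jumps come from the local martingale part, which is c\`adl\`ag.

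For the converse, assume $\exp[Y^{\Lambda,C}(\delta X)]$ and $\exp[Y^{\Lambda,C}(-\delta X)]$ are submartingales. The key move is to recognize that the transformation linearizes the structure: writing $\widetilde X := Y^{\Lambda,C}(\delta X)$ for the ``$+$'' version and $\underline{X}:=Y^{\Lambda,C}(-\delta X)$ for the ``$-$'' version, I would run the argument of Theorem \ref{expsubmartingale}: by the Meyer--Yoeurp multiplicative decomposition applied to $\exp(\widetilde X)$ and $\exp(\underline{X})$, together with the martingale representation theorem, recover local martingales $\bar M,\underline M$ with $\bar M=-\underline M$ (from uniqueness of the canonical decomposition applied to the identity $\widetilde X + \underline X = 2(\Lambda + |\delta X|*C)$, a continuous finite-variation process), and a predictable density $0\le \alpha_t\le 2$ controlling how the increasing mass splits. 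Unwinding the transformation $Y^{\Lambda,C}$ — subtracting off $\Lambda + |\delta X|*C$ and dividing by $\delta$ — then yields exactly the two-sided structure inequality $\mathcal{Q}(\Lambda,C,\delta)$ for $dV_t$, with $V$ continuous predictable. The main obstacle is bookkeeping the $|\delta X|*C$ and local-time/convexity terms through the inverse transformation: as in Proposition \ref{transform}(2), one must invoke Lemma \ref{ineqsaut} (for $j(k\Delta M)\ge k\,j(\Delta M)$, $k\ge 1$) and the elementary inequality $|y+u|-|y|\ge \mathrm{sign}(y)u$ to guarantee that the various correction terms land on the correct (increasing) side of the inequality, so that the net drift $V$ genuinely satisfies the two-sided bound rather than merely one side — this is where the estimates are delicate and where I would spend the most care.
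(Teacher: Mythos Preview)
Your approach is essentially the paper's: forward via Proposition~\ref{transform}, converse via Meyer--Yoeurp multiplicative decomposition plus a Radon--Nikodym density $0\le\alpha_t\le 2$, exactly as in Theorem~\ref{expsubmartingale}. One correction: the converse is \emph{cleaner} than you expect. Once Meyer--Yoeurp gives
\[
\delta X_t + \Lambda_t + |\delta X|*C_t = \delta X_0 + \bar M_t - \tfrac12\langle\bar M^c\rangle_t - j_t(\Delta\bar M^d) + \bar A_t,
\]
and the analogous identity for $-\delta X$, adding them yields $\bar A_t+\underline A_t = \langle\bar M^c\rangle_t + j_t(\Delta\bar M^d) + j_t(-\Delta\bar M^d) + 2\Lambda_t + 2|\delta X|*C_t$, and Radon--Nikodym produces $d\bar A_t = \tfrac{\alpha_t}{2}\,d[\cdots]$. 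Substituting back and reading off $dV_t$ for $X$, the two-sided bound ${\cal Q}(\Lambda,C,\delta)$ follows \emph{by direct inspection} of the coefficients (each lies in the right range because $0\le\alpha_t\le 2$). You do \emph{not} need Lemma~\ref{ineqsaut} or the inequality $|y+u|-|y|\ge\mathrm{sign}(y)u$ here; those appear only in the proof of Proposition~\ref{transform}(2) for the transformation $\bar Y^{\Lambda,C}(|X|)$, which involves absolute values and hence Meyer--It\^o local times---a complication absent from $Y^{\Lambda,C}(\pm\delta X)$. So drop that last paragraph of anticipated obstacles; the unwinding is purely algebraic.

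On the ``equivalently'' clause: the paper does not prove the equivalence of the $\exp[Y^{\Lambda,C}]$ and $U^{\Lambda,C}$ formulations as a separate lemma; it simply remarks that the same Meyer--Yoeurp/Radon--Nikodym argument runs for $U^{\Lambda,C}(e^{\pm\delta X})$ in place of $\exp[Y^{\Lambda,C}(\pm\delta X)]$. Your attempt to compare their It\^o decompositions directly is reasonable but note the structures differ multiplicatively (one has the factor $e^{\Lambda+|\delta X|*C}$, the other adds the integral $\int e^{\delta X}(d\Lambda+|\delta X|\,dC)$), so ``same martingale part up to a positive factor, drift differs by increasing terms'' needs more care than you indicate.
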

 \begin{proof} Let consider a ${\cal Q}(\Lambda,C,\delta)$-local semimartingale $X$, using Proposition \ref{transform}-2), we prove the process $\exp(Y^{\Lambda,C}(\delta X))$ is a local submartingale. The same arguments lead us to conclude also that $\exp(Y^{\Lambda,C}(-\delta X))$ is a local submartingale since $-X$ as the same structure condition as $X$. Let now consider that the both processes $\exp(Y^{\Lambda,C}(\delta X))$ and $\exp(Y^{\Lambda,C}(-\delta X))$  are positive submartingales then we can apply the Yoeurp-Meyer decomposition as Theorem \ref{expsubmartingale} and conclude there exists continuous local martingales $\bar M^c,\underline{M}^c$, increasing processes $\bar A, \underline{A}$ and $\bar U, \underline{U} \in {\cal G}_{loc}(\mu)$ such that
\begin{equation*}
\begin{split}
&\exp[Y^{\Lambda,C}_t(\delta X)]=\exp(\delta X_0)\exp(\bar M_t -{1\over 2}\langle \bar M^c\rangle_t-(e^{\bar U}-\bar U-1).\nu_t+\bar A_t)
\\&\exp[Y^{\Lambda,C}_t(-\delta X)]=\exp(-\delta X_0)\exp(\underline{M}_t -{1\over 2}\langle \underline{M}^c\rangle_t-(e^{\underline{U}}-\underline{U}-1).\nu_t+\underline{A}_t)
\end{split}
\end{equation*}
then we find 
$\delta X_t+\Lambda_t+|X_t|*C_t=\delta X_0+\bar M_t -{1\over 2}\langle \bar M^c\rangle_t-(e^{\bar U}-\bar U-1).\nu_t+\bar A_t$ and 
$-\delta X_t+\Lambda_t+|X_t|*C_t=-\delta X_0+\underline{M}_t -{1\over 2}\langle \underline{M}^c\rangle_t-(e^{\underline{U}}-\underline{U}-1).\nu_t+\underline{A}_t$. 
Therefore $\bar M=-\underline{M}$ from uniqueness of the decomposition, moreover $\bar A_t+\underline{A}_t=\langle M^c\rangle+(e^{\bar U}-\bar U-1)\nu_t+(e^{-\bar U}+\bar U-1)\nu_t+2\Lambda_t +2|X_t|*C_t$. We deduce the both processes $\bar A$ and $\underline A$ are continuous and from Radon Nikodym Theorem, there exists a predictable process $0\le \alpha\le 2$ such that $dA_t={\alpha_t\over 2}\left[\langle M^c\rangle+(e^{\bar U}-\bar U-1)\nu_t+(e^{-\bar U}+\bar U-1)\nu_t+2\Lambda_t +2|X_t|*C_t\right]$ then we find the decomposition of $X=X_0-V+\widetilde M$ where:
$$d\widetilde V_t={\delta\over 2}(1-\alpha_t)d\langle \widetilde M^c\rangle_t+{(2-\alpha_t)\over 2}{1\over \delta}dj_t(\delta \Delta \widetilde M_t)+{(2-\alpha_t)\over 2}{1\over \delta}d\Lambda_t+{(2-\alpha_t)\over 2}|X_t|dC_t-{\alpha_t\over 2}dj_t(-\delta\Delta \widetilde M_t)$$ 
\noindent Since the predictable process $0\le \alpha\le 2$, we find:
\begin{equation*}
 \left\lbrace
\begin{split}
& -{\delta\over 2}d\langle \widetilde M^c\rangle_t-{1\over \delta}d\Lambda_t-|X_t|.dC_t-{1\over \delta}dj_t[-\delta \Delta \widetilde M^d_t)]<\!\!< dV_t,\\
& dV_t<\!\!< {\delta\over 2}d\langle \widetilde M^c\rangle_t+{1\over \delta} d\Lambda_t+|X_t|.dC_t+{1\over \delta}dj_t[\delta \Delta \widetilde M^d_t)].
\end{split}
\right.
\end{equation*}
\noindent then $X$ is a ${\cal Q}(\Lambda,C,\delta)$- local semimartingale. equivalently, we can use the same arguments for the positive local submartingale $U^{\Lambda,C}(e^{\delta X})$ and $U^{\Lambda,C}(e^{\delta X})$ to find that the process $X$ is a ${\cal Q}(\Lambda,C,\delta)$-local semimartingale. Moreover since the finite variation part of  $\widetilde V$ is continuous, jumps come from the local martingale part. Hence, the process $X$ is a c\`adl\`ag local semimartingale.
 \end{proof}
 In all the rest of the paper, since from a multiplicative transformation (see Proposition \ref{transform}), we can transform the general class 
${\cal Q}(\Lambda,C,\delta)$ to the class ${\cal Q}(\Lambda,C,1)$. We can give all results in the class ${\cal Q}(\Lambda,C):={\cal Q}(\Lambda,C,1)$ without losing any generality. 
\subsubsection{Uniform Integrable ${\cal Q}(\Lambda,C)$- semimartingales} 
  We use the entropic submartingales to characterize the integrability condition for ${\cal Q}(0,0,1)$-class.  Given an fixed horizon time, we find in this part sufficient condition on the terminal condition to have uniform integrability of general local quadratic exponential semimartingales. First, let give some generalization of entropic submartingales for general ${\cal Q}(\Lambda,C)$-semimartingales.
\begin{Theorem}\label{generalui} let $X$ be a c\`adl\`ag process and $T$ a fixed horizon time. 
\begin{enumerate}
\item Assuming,  $\exp(|X_T|)\in L^1$, the process $X$ is a ${\cal Q}(\Lambda,C)$-semimartingale which belongs to ${\cal D}_{exp}$ if and only if for any stopping times $\sigma\le \tau\le T$:
\begin{equation}\label{entropiesinequalities1}
X_{\sigma}\le \rho_\sigma(X_\tau+\Lambda_{\sigma,\tau}+|X|*C_{\sigma,\tau}) \hbox{ and } -X_{\sigma}\le \rho_\sigma(-X_\tau+\Lambda_{\sigma,\tau}+|X|*C_{\sigma,\tau}).
\end{equation}
\item Assuming $U^{\Lambda,C}_T(e^{|X|})\in L^1$, the process $X$ is a ${\cal Q}(\Lambda,C)$-semimartingale which belongs to ${\cal D}_{exp}$ if and only if for any stopping times $\sigma\le \tau\le T$:
$$X_\sigma\le \rho_\sigma({X_\tau}+\Lambda_{\sigma,\tau}+|X|*C_{\sigma,\tau}) \hbox{ and } -X_\sigma\le \rho_\sigma({-X_\tau}+\Lambda_{\sigma,\tau}+|X|*C_{\sigma,\tau}).$$
\end{enumerate}
\end{Theorem}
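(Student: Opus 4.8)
The plan is to transport the proof of Theorem~\ref{uniformQexp} to the general class through the exponential transforms of Proposition~\ref{transform}, with Theorem~\ref{generalsubmartingale} acting as the bridge between the structure condition ${\cal Q}(\Lambda,C)$ and submartingale properties of the transformed processes; by the reduction recalled just before the statement we may and do take $\delta=1$. The first step is purely algebraic: since $\rho_\sigma(\pm X_\tau+\Lambda_{\sigma,\tau}+|X|*C_{\sigma,\tau})=\rho_\sigma\big(Y^{\Lambda,C}(\pm X)_\tau\big)-\Lambda_\sigma-|X|*C_\sigma$ while $Y^{\Lambda,C}(\pm X)_\sigma=\pm X_\sigma+\Lambda_\sigma+|X|*C_\sigma$, the stated pair of inequalities is equivalent to saying that $Y^{\Lambda,C}(X)$ and $Y^{\Lambda,C}(-X)$ are entropic submartingales in the sense defined above. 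Thus in both parts it suffices to characterise $X\in{\cal Q}(\Lambda,C)\cap{\cal D}_{exp}$ by the entropic submartingale property of $Y^{\Lambda,C}(\pm X)$, the two parts differing only in the integrability hypothesis used to produce the dominating uniformly integrable martingale below.

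For the direct implication I would start from $X\in{\cal Q}(\Lambda,C)\cap{\cal D}_{exp}$, so that by Proposition~\ref{transform}(2)--(3) the processes $\exp[Y^{\Lambda,C}(\pm X)]$ and $U^{\Lambda,C}(e^{\pm X})$ are positive local submartingales. The crucial step is to upgrade these to genuine class-${\cal D}$ submartingales; as recalled before Theorem~\ref{uniformQexp}, it is enough to dominate each of them by a positive uniformly integrable martingale. This is where the hypotheses enter: in part~1 the hypothesis $\exp(|X_T|)\in L^1$ (together with $X\in{\cal D}_{exp}$ and finiteness of $\Lambda_T+|X|*C_T$) makes $\exp[Y^{\Lambda,C}(\pm X)_T]$ integrable and $t\mapsto\mathbb{E}[\exp[Y^{\Lambda,C}(\pm X)_T]\mid{\cal F}_t]$ the desired dominating martingale; in part~2 the hypothesis $U^{\Lambda,C}_T(e^{|X|})\in L^1$ is tailored exactly for this, via the pathwise bound $U^{\Lambda,C}_t(e^{\pm X})\le U^{\Lambda,C}_t(e^{|X|})$. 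Once the transformed processes are true class-${\cal D}$ submartingales, optional sampling between stopping times $\sigma\le\tau\le T$ gives $\exp[Y^{\Lambda,C}(\pm X)_\sigma]\le\mathbb{E}[\exp[Y^{\Lambda,C}(\pm X)_\tau]\mid{\cal F}_\sigma]$, and taking logarithms and cancelling the ${\cal F}_\sigma$-measurable term $\Lambda_\sigma+|X|*C_\sigma$ produces exactly the claimed entropic inequalities.

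For the converse I would assume the entropic inequalities, i.e.\ that $Y^{\Lambda,C}(\pm X)$ are entropic submartingales. Taking $\tau=T$ yields $\exp[Y^{\Lambda,C}(\pm X)_\sigma]\le\mathbb{E}[\exp[Y^{\Lambda,C}(\pm X)_T]\mid{\cal F}_\sigma]$ for every stopping time $\sigma$; under the $L^1$-hypothesis of the relevant part the right-hand side is a uniformly integrable martingale dominating the whole process, so $\exp[Y^{\Lambda,C}(\pm X)]$ — equivalently, via Theorem~\ref{generalsubmartingale}, $U^{\Lambda,C}(e^{\pm X})$ — are submartingales of class ${\cal D}$, in particular local submartingales. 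By the ``if'' direction of Theorem~\ref{generalsubmartingale} this forces $X$ to be a ${\cal Q}(\Lambda,C)$-semimartingale; and since $\Lambda$ and $|X|*C$ are nonnegative, $\exp(\pm X)\le\exp[Y^{\Lambda,C}(\pm X)]$ is dominated by a uniformly integrable martingale, hence $\exp(|X|)\in{\cal D}$, i.e.\ $X\in{\cal D}_{exp}$.

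The step I expect to be the main obstacle — and the only place where the two integrability hypotheses genuinely play a role — is precisely the upgrade from ``positive local submartingale'' to ``uniformly integrable (class-${\cal D}$) submartingale'' for the transformed processes: a positive local submartingale can lose mass, so the pointwise comparison with the terminal conditional expectation requires an honest dominating uniformly integrable martingale, and it is exactly $\exp[Y^{\Lambda,C}(\pm X)_T]\in L^1$ (part~1) and $U^{\Lambda,C}_T(e^{|X|})\in L^1$ with $U^{\Lambda,C}_t(e^{\pm X})\le U^{\Lambda,C}_t(e^{|X|})$ (part~2) that are designed to furnish it. The remainder is bookkeeping: unwinding the definitions of $Y^{\Lambda,C}$ and $U^{\Lambda,C}$, re-quoting Proposition~\ref{transform} and Theorems~\ref{expsubmartingale} and~\ref{generalsubmartingale}, and the elementary identity relating $\rho_\sigma$ of the transformed terminal value to the stated inequalities.
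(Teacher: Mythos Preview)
Your proposal is correct and follows essentially the same approach as the paper's own proof: both arguments reduce the statement to the (local) submartingale property of $\exp[Y^{\Lambda,C}(\pm X)]$ (respectively $U^{\Lambda,C}(e^{\pm X})$) via Theorem~\ref{generalsubmartingale}, then use the integrability hypothesis to pass to class~${\cal D}$ and apply optional sampling, with the converse going back through Theorem~\ref{generalsubmartingale}. Your write-up is in fact more careful than the paper's, which simply asserts that the transformed processes are class-${\cal D}$ submartingales without isolating the local-to-true upgrade; your explicit identification of this step and of the algebraic identity linking the entropic inequalities to the entropic-submartingale property of $Y^{\Lambda,C}(\pm X)$ makes the logical structure clearer.
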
 
 \begin{proof}
\begin{enumerate} 
\item Let $X$ a ${\cal Q}(\Lambda,C)$-semimartingales which belongs to the class ${\cal D}_{exp}$. From Theeorem \ref{generalsubmartingale}, $\exp(Y^{\Lambda,C}(X))$ and  $\exp(Y^{\Lambda,C}(-X))$ are submartingales which belong to the class ${\cal D}$. Therefore for any stopping times $\sigma\le \tau\le T$:
 $$\exp(Y^{\Lambda,C}_\sigma(X)\le \mathbb{E}\left[\exp(Y^{\Lambda,C}_\tau(X)\vert {\cal F}_\sigma\right] \hbox{ and } \exp(Y^{\Lambda,C}_\sigma(-X)\le \mathbb{E}\left[\exp(Y^{\Lambda,C}_\tau(-X)\vert {\cal F}_\sigma\right]$$
\noindent then the ${\cal Q}(\Lambda,C)$ semimartingale $X$ satisfies the entropy inequalities \eqref{entropiesinequalities1}. Let assume the inequalities \eqref{entropiesinequalities1} are satified then we conclude $\exp(Y^{\Lambda,C}(X))$ and  $\exp(Y^{\Lambda,C}(-X))$ are submartingales which belong to the class ${\cal D}$ then from Theorem \ref{generalsubmartingale}, $X$ is a ${\cal Q}(\Lambda,C)$ semimartingales which belong to the class ${\cal D}_{exp}$.
\item We use the same arguments  with the positive submartingales $U^{\Lambda,C}(e^X)$ and $U^{\Lambda,C}(e^{-X})$.  
\end{enumerate}
\end{proof} 
The Theorem \ref{generalui} gives sufficient integrable condition for ${\cal Q}(\Lambda,C)$-semimartingale $X$ such that it belongs to the class ${\cal D}_{exp}$. We can find another condition using the transformation $\bar Y^{\Lambda,C}(|X|)$ since it is a ${\cal Q}$-submartingale. Therefore, using the same arguments as assertions in Theorem \ref{generalui}, we find   $\bar Y^{\Lambda,C}(|X_t|) \le \bar\rho_t[\exp(\bar Y^{\Lambda,C}(|X_T|)]$ which is equivalent to the condition given by \cite{BElK11} in the continuous case (see Hypotehsis 2.8 \cite{BElK11}):
\begin{equation}\label{condition}
|X_t|\le \rho_t\left[e^{C_{t,T}}|Y_T|+\int_t^T e^{C_{t,s}}d\Lambda_s\right],\quad t\le T.
\end{equation}
This assumption is a necessary and sufficient condition for the process $\bar Y^{\Lambda,C}(|X|)$ to be in class ${\cal D}_{exp}$ (the proof is given in Lemma 2.9 of \cite{BElK11}). In the same way, assertions in Proposition 2.10 of \cite{BElK11} still hold since the authors give the result in the general case (without using the continuity of processes). Moreover using the same $L Log L$ Doob-inequality, we can find the same sufficient condition on the terminal value $\bar Y^{\Lambda,C}(|X|)$ such that $|X| \in {\cal D}_{exp}$.
\begin{Proposition}\label{LLogL} Let consider an fixed horizon time $T>0$ and let $L$ be a positive submartingale such that $\max L_T:=\max_{t\in[0,T]} L_t\in(1,+\infty)$. For any $m>0$, let $u_m$ the convex function defined on $\R^+$ defined by $u_m(x)=x-m-m\ln(x)$ and $u(x):=u_1(x)$, the following assertions are satisfied:
\begin{enumerate}
\item Using the Dol\'eans Dade representation of positive martingale $L$, $L_t={\cal E}(M^c_t+(e^U-1).\widetilde \mu)$, $t\le T$, we find:
$$H^{ent}:=\mathbb{E}[L_T\ln(L_T)]=\mathbb{E}\left[L_T\left({1\over 2}\langle M^c\rangle_T+(Ue^U-e^U+1).\nu_T\right)\right].$$
\item The following sharp inequality holds true:
$$u(\mathbb{E}(\max L_T))\le \mathbb{E}\left(L_T\ln(L_T)\right).$$
Moreover, if $L$ is a positive ${\cal D}$-submartingale, the previous inequality becomes:
$$u_m(\mathbb{E}(\max L_T))-u_m(L_0)\le \mathbb{E}[L_T\ln(L_T)]-\mathbb{E}(L_T)\ln\left[\mathbb{E}(L_T)\right].$$
\noindent where $m=\mathbb{E}(L_T)$.
\end{enumerate}   
\end{Proposition}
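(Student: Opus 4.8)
The plan is to treat the two assertions separately, using the Doléans-Dade representation for part (1) and a classical Doob-type maximal argument combined with convex analysis for part (2).

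For assertion (1), I would start from $L_t={\cal E}(M^c_t+(e^U-1).\widetilde\mu_t)$ and compute $\ln L_t$ via the Doléans-Dade formula \eqref{Doleans}: with $N:=M^c+(e^U-1).\widetilde\mu$ one has $\ln {\cal E}(N)_t = N_t - \tfrac12\langle N^c\rangle_t + \sum_{s\le t}\bigl(\ln(1+\Delta N_s)-\Delta N_s\bigr)$. Since $\Delta N_s = e^{U(s,\cdot)}-1$ on the support of $\mu$, the jump sum telescopes into $\bigl(U - (e^U-1)\bigr).\mu_t = \bigl(U-e^U+1\bigr).\mu_t$, while $\langle N^c\rangle = \langle M^c\rangle$. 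Hence $\ln L_t = M^c_t - \tfrac12\langle M^c\rangle_t + (U-e^U+1).\mu_t$. I would then form the product $L_T\ln L_T$ and take expectations; the key step is to replace $\mu$ by $\nu$ inside the $dt\,dx$-integral under the expectation. The clean way is to note that $dQ := L_T\,d\mathbb{P}$ defines a probability measure, that under $Q$ the compensator of $\mu$ is $(1+\Delta N).\nu = e^U.\nu$ (Girsanov for random measures), and that the $\mathbb{P}$-martingale terms $L_T M^c_T$ and $L_T\,[(U-e^U+1).(\mu-e^U\nu)]_T$ have zero expectation — the first because $L M^c$ is a (local, then true on the stated integrability domain) martingale, the second because it is the terminal value of a $Q$-martingale times $L$. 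What survives is $\mathbb{E}[L_T\ln L_T] = \mathbb{E}^Q[(U-e^U+1).\nu_T \cdot (dQ/d\mathbb{P})^{-1}\cdots]$ — more directly, $\mathbb{E}[L_T\,(U-e^U+1).\mu_T] = \mathbb{E}[L_T\,(e^U U - e^U +1).\nu_T]$ once the $(1+\Delta N)$ factor from the change of compensator is absorbed, giving exactly $H^{ent}=\mathbb{E}\bigl[L_T\bigl(\tfrac12\langle M^c\rangle_T + (Ue^U-e^U+1).\nu_T\bigr)\bigr]$. I would present this as the substitution $\mathbb{E}[L_T\, h.\mu_T] = \mathbb{E}[L_T\,(1+\Delta N)h.\nu_T]$ valid for $\widetilde{\cal P}$-measurable $h$, applied to $h = U-e^U+1$.

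For assertion (2), the target $u(\mathbb{E}(\max L_T))\le \mathbb{E}(L_T\ln L_T)$ follows from the $L\log L$ Doob inequality: for a positive submartingale $L$ one has $\mathbb{E}(\max_{t\le T}L_t) \le \frac{e}{e-1}\bigl(1 + \mathbb{E}(L_T\ln^+ L_T)\bigr)$, but here I want the sharp form phrased through the convex function $u(x)=x-1-\ln x$. The efficient route is the standard one: write $L^*_T=\max_{t\le T}L_t$, use the weak-type $(1,1)$ bound $\lambda\,\mathbb{P}(L^*_T\ge\lambda)\le \mathbb{E}[L_T\mathbf 1_{\{L^*_T\ge\lambda\}}]$ coming from optional stopping at the first passage time of $L$ above $\lambda$, integrate in $\lambda$ against $d\lambda/\lambda$ from $1$ to $\infty$, and apply Fubini together with the elementary inequality $x\int_1^{x}\frac{d\lambda}{\lambda}\le x\ln x$ (with care at $x<1$) to land on $\mathbb{E}(L^*_T)-1-\ln\mathbb{E}(L^*_T)\le \mathbb{E}(L_T\ln L_T)$, i.e. $u(\mathbb{E}(L^*_T))\le\mathbb{E}(L_T\ln L_T)$; monotonicity of $u$ on $[1,\infty)$ then gives $u(\mathbb{E}(\max L_T))$ with the hypothesis $\mathbb{E}(\max L_T)\in(1,\infty)$. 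For the refined inequality when $L$ is a ${\cal D}$-submartingale, I would set $m=\mathbb{E}(L_T)$ and run the same computation with the normalized submartingale $L/m$, which is still a positive submartingale with terminal expectation $1$: applying the first inequality to $L/m$ yields $u(\mathbb{E}(\max L_T)/m)\le \mathbb{E}\bigl[(L_T/m)\ln(L_T/m)\bigr]$, and multiplying through by $m$ and using $m\,u(x/m) = u_m(x) - u_m(m) + \text{(terms)}$ — more precisely the identity $m\,u(x/m)=x-m-m\ln x+m\ln m$ — converts this into $u_m(\mathbb{E}(\max L_T)) - u_m(L_0) \le \mathbb{E}[L_T\ln L_T]-\mathbb{E}(L_T)\ln\mathbb{E}(L_T)$, after using the submartingale property $L_0\le\mathbb{E}(L_T)=m$ so that $-u_m(L_0)$ absorbs the leftover constant (recall $u_m$ is minimized at $m$, so $u_m(L_0)\ge u_m(m)=0$ is the wrong direction — I would instead track the boundary term from the $\lambda$-integral starting at $\lambda=L_0$ rather than $\lambda=1$, which is exactly what produces $-u_m(L_0)$).

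The main obstacle I anticipate is assertion (1): making rigorous the passage from $\mu$ to $\nu$ under the measure $L_T\,d\mathbb{P}$ — i.e., justifying that the relevant local martingales are true martingales on the integrability class in play (this is where the hypothesis $\max L_T\in(1,\infty)$, or an implicit assumption that $L\in{\cal D}$, is used, presumably via Proposition \ref{transform}/Theorem \ref{uniformQexp}-type arguments or the Lepingle–Mémin bound \eqref{UnifInt}). Once the substitution lemma $\mathbb{E}[L_T h.\mu_T]=\mathbb{E}[L_T(1+\Delta N)h.\nu_T]$ is in hand, both the identity in (1) and the inequalities in (2) are short; (2) is entirely classical and I expect no difficulty beyond bookkeeping the constants $m$ and $L_0$ to get the sharp form with $u_m$.
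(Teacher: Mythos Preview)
For assertion (1) your approach is essentially the paper's: compute $\ln L$ from the Dol\'eans--Dade formula, pass to the measure $d\mathbb{Q}=L_T\,d\mathbb{P}$ so that $N^{\mathbb{Q}}=M^c-\langle M^c\rangle+U.\bigl(\widetilde\mu-(e^U-1).\nu\bigr)$ is a $\mathbb{Q}$-martingale, and localize. The paper makes the localization explicit via $T_K=\inf\{t:L_t>K\}$. One ingredient you omit, however, is the identity
\[
\mathbb{E}(\max L_T)-1=\mathbb{E}\bigl[L_T\ln(\max L_T)\bigr],
\]
which the paper proves first (after Dellacherie \cite{Del79}) from the fact that $L=\max L$ on the right support of $d\max L$; this identity is used both to deduce $L_T\ln L_T\in L^1$ from $\max L_T\in L^1$ and as the starting point of assertion (2).

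For assertion (2) there is a gap in your sketch. Integrating the weak-type bound against $d\lambda/\lambda$ over $[1,\infty)$ yields
\[
\mathbb{E}(L^*_T)-1\le\mathbb{E}\bigl[L_T\ln L^*_T\bigr],
\]
with the \emph{running maximum} $L^*_T$ on the right, not $L_T$; your ``elementary inequality $x\int_1^x\frac{d\lambda}{\lambda}\le x\ln x$'' is in fact an equality and does not bridge this to $\mathbb{E}[L_T\ln L_T]$. The missing step is Jensen under $d\mathbb{Q}=(L_T/m)\,d\mathbb{P}$, $m=\mathbb{E}(L_T)$, applied to the concave logarithm:
\[
\mathbb{E}\bigl[L_T\ln(L^*_T/L_T)\bigr]=m\,\mathbb{E}^{\mathbb{Q}}\bigl[\ln(L^*_T/L_T)\bigr]\le m\ln\mathbb{E}^{\mathbb{Q}}\bigl[L^*_T/L_T\bigr]=m\ln\bigl(\mathbb{E}(L^*_T)/m\bigr),
\]
which, combined with the previous display, gives $u(\mathbb{E}(L^*_T))\le\mathbb{E}[L_T\ln L_T]$ when $m=1$ and the $u_m$-form in general. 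The paper's route (deferred to \cite{BElK11}) starts instead from the Dellacherie \emph{equality} above and then applies the same Jensen step; the equality is precisely why the bound is called ``sharp''. Your treatment of the $u_m$ refinement is likewise incomplete --- your own parenthetical (``the wrong direction'') shows the normalization $L/m$ does not close as written; the correct argument tracks the base point $L_0$ in the Dellacherie identity (or integrates from $\lambda=L_0$) together with the Jensen step above.
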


\begin{proof}:
\begin{enumerate}
\item To prove the assertion, let us first prove that the equality $$\mathbb{E}(\max L_T)-1=\mathbb{E}\left[L_T\ln(\max L_T)\right]$$ holds  true  in  our case. From Dellacherie \cite{Del79} p.375, $\max L_t(\omega)=L_t(\omega)$ for every jump time $t$ or every increasing of right of $s\longrightarrow \max L_s(\omega)$. Therefore $L=\max L$ on the right support of $d\max L$. Therefore we find $\max L_t=1+\int_0^t d\max L_s=\int_0^t {L_s\over \max L_s}d\max L_s$ then
$\mathbb{E}(\max L_T)-1=\mathbb{E}\left[L_T\ln(\max L_T)\right].$ holds true. From this equality, it is sufficient that $\max L_T \in L^1$ to find  $L_T\ln(L_T)\in L^1$. Let assume, $\max L_T\in L^1$ and let define the stopping times $T_K$ such that the positive local martingale $L_t={\cal E}(M_t+(e^U-1)\widetilde \mu_t)\le K$. The stopping times $T_K$ is increasing and goes to infinity with $K$. Let define the process $N^{\mathbb{Q}}=M^c-\langle M^c\rangle+U.(\widetilde \mu-(e^U-1).\nu).$ is a martingale with respect to $\mathbb{Q}=L_T\mathbb{P}$ and we get:
\begin{equation*}
\begin{split}
&\mathbb{E}\left[L_T\left({1\over 2}\langle M^c\rangle_T+(e^U-U-1).\nu_T\right)\right]=\lim_K\mathbb{E}\left[L_T\left({1\over 2}\langle M^c\rangle_{T\wedge T_K}+(e^U-U-1).\nu_{T\wedge T_K}\right)\right]\\&=\lim_K\mathbb{E}\left[L_{T\wedge T_K}\left({1\over 2}\langle M^c\rangle_{T\wedge T_K}+(e^U-U-1).\nu_{T\wedge T_K}\right)\right]
\end{split}
\end{equation*}
Since $ \mathbb{E}(L_{T\wedge T_K}N^{\mathbb{Q}}_{T\wedge T_K})=0$, we find:
$$\mathbb{E}\left[L_{T\wedge T_K}\ln(L_{T\wedge T_K})\right]=\mathbb{E}\left[L_{T\wedge T_K}\left({1\over 2}\langle M^c\rangle_{T\wedge T_K}+U(e^U-1).\nu_{T\wedge T_K}+(e^U-U-1).\nu_{T\wedge T_K}\right)\right]$$
\noindent We have $\mathbb{E}\left[L_{T\wedge T_K}\ln(L_{T\wedge T_K})\right]\le \mathbb{E}\left[L_{T\wedge T_K}\ln(\max L_{T\wedge T_K})\right]\le \mathbb{E}\left[\max L_T\right]-1\le +\infty$, then we get the result by taking the limit when $K$ goes to infinity.
\item The proof is done in \cite{BElK11}, since authors used the first assertion to prove the result. 
\end{enumerate}
\end{proof}
Let $X$ be a ${\cal Q}(\Lambda,C)$-semimartingale, applying the result of Proposition \ref{LLogL} to the positive submartingale $\exp(\bar Y^{\Lambda,C}(|X|))$, we conclude if $\mathbb{E}\left(\bar Y^{\Lambda,C}_T(|X|)\exp[\bar Y^{\Lambda,C}_T(|X|)]\right)\in L^1$ then we have $\max\mathbb{E}\left(\bar Y^{\Lambda,C}_T(|X|)\exp[\bar Y^{\Lambda,C}_T(|X|)]\right)\in L^1$ and the inequality \eqref{condition} is satisfied, therefore $\bar Y^{\Lambda,C}(|X|)$ belongs to  class ${\cal D}_{exp}$. To conclude this part, let recall the definition of the class of ${\cal Q}(\Lambda,C)$-semimartingales which belong to ${\cal D}_{exp}$ given by \cite{BElK11}.

\begin{Definition} Let $\eta_T$ be a ${\cal F}_T$-random variable such that $$\exp[\gamma \bar Y^{\Lambda,C}_T(|\eta_T|)]=\exp[\gamma(e^{C_T}|\eta_T|+\int_0^T e^{C_s}d\Lambda_s)]$$ belongs to $L^1$, for all $\gamma>0$.  We define a class of ${\cal S}_{Q}(|\eta_T|,\Lambda,C)$  of ${\cal Q}(\Lambda,C)$-semimartingales  $X$ such that $$|X_t|\le \bar\rho_t\left[e^{C_{t,T}}|\eta_T|+\int_t^T e^{C_{t,s}}d\Lambda_s\right], \quad a.s.$$
\end{Definition}
\section{Quadratic-exponential variation and stability result}
\subsection{A priori estimates}
We now focus  on the estimate  of the martingale part of a semimartingale $X \in  {\cal S}_Q(|\eta_T|,\Lambda,C)$. The estimates of the discontinuous martingales part allow us to conclude the predictable projection $j(\gamma \Delta M^d)$, $\gamma\in \{-1,1\}$ is well defined when the semimartingale $X$ lives in a suitable space. 

\begin{Proposition}\label{aprioriestimates}
Let consider a semimartingale $X\in {\cal S}_Q(|\eta_T|,\Lambda,C)$ which follows the decomposition $X=X_0-V+M^c+M^d$, where there exists a process $U\in {\cal G}_{loc}(\mu)$ such that $M^d=U.\widetilde \mu$ then  the matingales $\bar M=M^c+(e^U-1).\widetilde \mu$ and $\underline{M}=-M^c+(e^{-U}-1).\widetilde \mu$ belong to ${\cal U}_{exp}$ and ${\cal M}^p_0$ for any $p\ge 1$. 
\\\\\noindent 
Moreover if for any stopping times $\sigma\le T$ there exists a constant $c>0$ such that  $$\mathbb{E}\left[\exp(e^{C_{T}}|\eta_T|+\int_0^T e^{C_{s}} d\Lambda_s)\vert {\cal F}_\sigma\right]\le c,$$ then the processes $\bar M$ and $\underline{M}$ are $\rm{BMO}$ martingales.

\end{Proposition}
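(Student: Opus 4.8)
The plan is to exploit the entropic domination that defines membership in $\mathcal{S}_Q(|\eta_T|,\Lambda,C)$, together with the structure condition $\mathcal{Q}(\Lambda,C)$, to control the martingale part. First I would show $\bar M,\underline M\in\mathcal{U}_{\exp}$. By Theorem \ref{generalui} (or the transformation results of Proposition \ref{transform}), the process $X$ being a $\mathcal{Q}(\Lambda,C)$-semimartingale in $\mathcal{D}_{\exp}$ means $\exp(Y^{\Lambda,C}(X))$ and $\exp(Y^{\Lambda,C}(-X))$ are positive submartingales in class $\mathcal{D}$. Applying the Meyer--Yoeurp multiplicative decomposition as in the proof of Theorem \ref{expsubmartingale}/\ref{generalsubmartingale}, one gets $\mathcal{E}(\bar M)\le \exp(Y^{\Lambda,C}(X)-\delta X_0)$ and $\mathcal{E}(\underline M)\le \exp(Y^{\Lambda,C}(-X)+\delta X_0)$ (here with $\delta=1$). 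Since $Y^{\Lambda,C}(\pm X)=\pm X+\Lambda+|X|*C$ and the defining inequality of $\mathcal{S}_Q$ gives $|X_t|\le\bar\rho_t[e^{C_{t,T}}|\eta_T|+\int_t^T e^{C_{t,s}}d\Lambda_s]$, hence $Y^{\Lambda,C}(|X|)\in\mathcal{D}_{\exp}$, the Doléans--Dade exponentials $\mathcal{E}(\bar M),\mathcal{E}(\underline M)$ are dominated by a uniformly integrable martingale, so they are themselves uniformly integrable; i.e. $\bar M,\underline M\in\mathcal{U}_{\exp}$.

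Next I would upgrade to $\mathcal{M}^p_0$ for every $p\ge 1$. The key is the entropy inequality: from $|X_t|\le\rho_t[\overline U_T]$ with $\overline U_T:=e^{C_{t,T}}|\eta_T|+\int_t^T e^{C_{t,s}}d\Lambda_s$ and the hypothesis $\exp(\gamma\, Y^{\Lambda,C}_T(|\eta_T|))\in L^1$ for all $\gamma>0$, one obtains exponential moments of $\sup_{t\le T}|X_t|$ of all orders. Now write $\bar M = X - X_0 + V + (e^U-U-1).\nu$ and use the structure condition $\mathcal{Q}(\Lambda,C)$ to bound $|V|$ and the jump-compensator term $j(\pm\Delta M^d)=(e^{\pm U}\mp U-1).\nu$ by $\tfrac12\langle M^c\rangle + \Lambda + |X|*C + j(\Delta M^d)$ etc. Combining the two exponential submartingale bounds on $\mathcal{E}(\bar M)$ and $\mathcal{E}(\underline M)$ — whose product controls $\exp(\langle M^c\rangle_T) $ and the analogous jump quantity, as in the computation preceding Proposition \ref{LLogL} — together with the exponential integrability of $\sup|X|$, gives that $\langle M^c\rangle_T$ and $(|e^U-1|^2+|e^{-U}-1|^2).\nu_T$ have exponential moments of all orders. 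By the Burkholder--Davis--Gundy inequality applied to the continuous and discontinuous parts (the latter via $[\bar M^d]_T\le (e^U-1)^2.\mu_T$ and a comparison of $(e^U-1)^2.\mu$ with its compensator), this yields $\mathbb{E}[\sup_{t\le T}|\bar M_t|^p]<\infty$ and likewise for $\underline M$, i.e. $\bar M,\underline M\in\mathcal{M}^p_0$.

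Finally, for the BMO claim, assume $\mathbb{E}[\exp(e^{C_T}|\eta_T|+\int_0^T e^{C_s}d\Lambda_s)\mid\mathcal{F}_\sigma]\le c$ uniformly in stopping times $\sigma$. This forces $|X_\sigma|\le \ln c$ a.s., i.e. $X$ is bounded, and $Y^{\Lambda,C}_\sigma(|X|)$ is bounded. Taking conditional expectations in the multiplicative-decomposition bound $\mathcal{E}_\tau(\bar M)/\mathcal{E}_\sigma(\bar M)\le \exp(Y^{\Lambda,C}_\tau(X)-Y^{\Lambda,C}_\sigma(X))$ and using the reverse bound from $\exp(-Y^{\Lambda,C}(X))$, one controls $\mathbb{E}[\langle M^c\rangle_\tau-\langle M^c\rangle_\sigma\mid\mathcal{F}_\sigma]$ and $\mathbb{E}[((e^U-U-1)+(e^{-U}+U-1)).\nu_\tau - \cdots \mid\mathcal{F}_\sigma]$ by a constant depending only on $c$ and the bound on $Y^{\Lambda,C}(|X|)$; since $-2ab\le a^2+b^2$ with $a=e^{U}-1$, $b=e^{-U}-1$ (the inequality already used in the Remark after the Definition), this dominates $\mathbb{E}[|e^U-1|^2.\nu_\tau-|e^U-1|^2.\nu_\sigma\mid\mathcal{F}_\sigma]$ as well. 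Together with $\mathbb{E}[\langle\bar M^c\rangle_\tau-\langle\bar M^c\rangle_\sigma\mid\mathcal{F}_\sigma]\le \mathrm{const}$ and the analogous discontinuous estimate, and noting $\Delta\bar M$ is bounded because $X$ and its jumps are bounded, this is exactly the (conditional, plus bounded-jumps) BMO condition for $\bar M$, and symmetrically for $\underline M$.

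The main obstacle I expect is the passage from the scalar entropic/submartingale bounds to genuine $L^p$-norm control of $\sup_{t\le T}|\bar M_t|$: one must carefully convert the exponential bound on the bracket $\langle M^c\rangle_T$ and on the jump-compensator into moment bounds on the martingale via BDG, and handle the discontinuous part $(e^U-1).\widetilde\mu$ — whose jumps are not a priori bounded in the $\mathcal{M}^p$ statement — by first bounding $|e^U-1|^2.\nu_T$ in every $L^q$ using the two-sided exponential domination, then invoking BDG for purely discontinuous martingales. The localization (the stopping times $T_K$ making $\mathcal{E}(\bar M)$ bounded, as in the proof of Proposition \ref{LLogL}) must be threaded through all these estimates and removed at the end by monotone convergence.
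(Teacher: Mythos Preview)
Your argument for $\mathcal{U}_{\exp}$ is correct and is exactly the paper's: the Meyer--Yoeurp decomposition of the class-$\mathcal{D}$ submartingales $\bar K:=\exp(Y^{\Lambda,C}(X))$ and $\underline K:=\exp(Y^{\Lambda,C}(-X))$ yields $\mathcal{E}(\bar M)\le \bar K\,e^{-X_0}$ and $\mathcal{E}(\underline M)\le \underline K\,e^{X_0}$, whence uniform integrability.

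The $\mathcal{M}^p_0$ step, however, has a genuine gap. You claim that the product of the bounds on $\mathcal{E}(\bar M)$ and $\mathcal{E}(\underline M)$ ``controls $\exp(\langle M^c\rangle_T)$''. A direct computation gives
\[
\mathcal{E}(\bar M_t)\,\mathcal{E}(\underline M_t)=\exp\Bigl(-\langle M^c\rangle_t-(e^{U}+e^{-U}-2).\nu_t\Bigr),
\]
so the \emph{upper} bounds you possess on the two Dol\'eans--Dade exponentials produce only a \emph{lower} bound on $\langle M^c\rangle_t$, which is useless. Equivalently, in $\bar K=e^{X_0}\mathcal{E}(\bar M)e^{\bar A}$ you control $\bar K$ on both sides but not $\bar A$ (nor $\mathcal{E}(\bar M)$ from below). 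And even if $(e^U+e^{-U}-2).\nu_T$ were under control, it does not dominate $(e^U-1)^2.\nu_T$ when jumps are unbounded, so your BDG step for the purely discontinuous part would still fail.

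The paper's device bypasses the brackets of $M$ entirely. From $d\bar K_t=\bar K_{t^-}(d\bar M_t+d\bar A_t)$ one reads off $d[\bar K]_t=\bar K_{t^-}^{\,2}\,d[\bar M]_t$, hence
\[
[\bar M]_{\sigma,T}\ \le\ \Bigl(\sup_{\sigma\le t\le T}\bar K_t^{-2}\Bigr)\,[\bar K]_{\sigma,T}.
\]
It\^o applied to $\bar K^2$ (a submartingale) gives $\mathbb{E}\bigl[[\bar K]_{\sigma,T}\mid\mathcal{F}_\sigma\bigr]\le \mathbb{E}\bigl[\bar K_T^{\,2}\mid\mathcal{F}_\sigma\bigr]$; since $\sup_t\bar K_t\in L^p$ for every $p$, the Garsia--Neveu lemma upgrades this conditional bound to $[\bar K]_T\in L^p$ for every $p$. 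Because $\sup_t\bar K_t^{-1}\le \exp\bigl(\sup_t Y^{\Lambda,C}_t(|X|)\bigr)\in L^p$ as well, Cauchy--Schwarz yields $[\bar M]_T\in L^p$, and BDG finishes. The BMO claim then falls out of the same two displayed inequalities: under the conditional hypothesis both $\bar K$ and $\bar K^{-1}$ are uniformly bounded, so $\mathbb{E}\bigl[[\bar M]_{\sigma,T}\mid\mathcal{F}_\sigma\bigr]$ is bounded independently of $\sigma$.
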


\begin{proof}
\begin{enumerate}
\item Let $X\in {\cal S}_Q(|\eta_T|,\Lambda,C)$, from Proposition \ref{generalsubmartingale}, $Y^{\Lambda,C}(X)=X+\Lambda+|X|*C$ and $Y^{\Lambda,C}(-X)$ are ${\cal Q}$-local submartingale. Moreover let recall the process $\bar Y^{\Lambda,C}(X)=e^C.|X|+e^C*\Lambda$ satisfies $Y^{\Lambda,C}(X)\le  \bar Y^{\Lambda,C}(X)$ and $Y^{\Lambda,C}(-X)\le  \bar Y^{\Lambda,C}(X)$, therefore since $X\in {\cal S}_Q(|\eta_T|,\Lambda,C)$, for any $p\ge 1$, we find:
\begin{equation}\label{bornes}
\exp(p|Y^{\Lambda,C}_t(|X|)|)\le\exp[p\bar Y^{\Lambda,C}_t(X)]\le \mathbb{E}\left[\exp[p(e^{C_T}|\eta_T|+\int_0^T e^{C_s}d\Lambda_s)]\vert {\cal F}_t\right]
\end{equation}
We conclude:
\begin{equation}\label{bornes1}
\mathbb{E}\left[\sup_{t\le T}\exp(p|Y^{\Lambda,C}_t(|X|)|)\right]<+\infty.
\end{equation}
From the submartingale property of $\exp(Y^{\Lambda,C}(X))$ and $\exp(Y^{\Lambda,C}(-X))$, from Yoeurp-Meyer decomposition, there exist increasing processes $\bar A$ and $\underline{A}$ such that:
\begin{equation*}
\begin{split}
&\bar K_t:=\exp(Y^{\Lambda,C}_t(X))=\exp(X_0){\cal E}(\bar M_t)\exp(\bar A_t)\\
&\underline{K}_t:=\exp(Y^{\Lambda,C}_t(-X))=\exp(-X_0){\cal E}(\underline{M}_t)\exp(\underline{A}_t)
\end{split}
\end{equation*}
Since $\bar A$ and $\underline{A}$ are increasing, from \eqref{bornes1} we conclude $\bar Z:={\cal E}(\bar M)$ and $\underline{Z}:={\cal E}(\underline{M})$ are uniformly integrable then $\bar M$ and $\underline{M}$  $\in {\mathcal U}_{\exp}$. Moreover $\bar Z$ and $\underline{Z}$ belong to ${\cal M}^p$, for any $p\ge 1$. Using intergration by part formula we find $d\bar K_t=\bar K_{t^-}\left[d\bar A_t+d\bar M_t\right]$ and $d\underline{K}_t=\underline{K}_{t^-}\left[d\underline{A}_t+d\underline{M}_t\right]$, that leads to $d[\bar K]_t=\bar K^2_{t^-}d[\bar M]_t \hbox{ and  } d[\underline{K}]_t=\underline{K}^2_{t^-}d[\underline{M}]_t.$ Therefore we find for any stopping times $\sigma\le T$, $  [\bar{M}]_{\sigma,T}=\int_\sigma^T { d[\bar{K}]_t\over \bar{K}^2_{t^-}}       \hbox{  and    }   [\underline{M}]_{\sigma,T}=\int_\sigma^T { d[\underline{K}]_t\over \underline{K}^2_{t^-}}$ then we find:

\begin{equation}\label{eqK}
[\bar M]_{\sigma,T}\le \sup_{\sigma\le t\le T}\left({1\over \bar K^2_t} \right)\times [\bar K]_{\sigma,T} \quad \hbox{    and     } \quad[\underline M]_{\sigma,T}\le \sup_{\sigma\le t\le T}\left({1\over \underline{K}^2_t}\right)\times[\underline K]_{\sigma,T}
\end{equation}
However we find a priori estilmates of $[\bar K]_T$ and $[\underline K]_T$ using Ito's decomposition of the submartingales $\bar K^2$ and $\underline{K}^2$:
\begin{equation*}
\begin{split}
&d\bar K^2_t=2 \bar K_{t^-}d\bar K_t+d[\bar K]_t=2\bar K^2_{t^-}[d\bar M_t+d\bar A_t]+d[\bar K]_t\\
&d\underline{K}^2_t=2 \underline{K}_{t^-}d\underline{K}_t+d[\underline{K}]_t=2\underline{K}^2_{t^-}[d\underline{M}_t+d\underline{A}_t]+d[\underline{K}]_t
\end{split}
\end{equation*}
Therefore for any stopping times $\sigma\le T$, we find:
\begin{equation}\label{Kbornes}
\begin{split}
\mathbb{E}\left[[\bar K]_{\sigma,T}\vert {\cal F}_\sigma\right]\le \mathbb{E}\left[\bar K^2_T\vert {\cal F}_\sigma\right]\quad \hbox{ and }\quad 
\mathbb{E}\left[[\underline K]_{\sigma,T}\vert {\cal F}_\sigma\right]\le \mathbb{E}\left[\underline K^2_T\vert {\cal F}_\sigma\right]
\end{split}
\end{equation}
Since $\sup_{0\le t\le T}\bar K_t$ and $\sup_{0\le t\le T}\underline K_t$ belong to $\mathrm{L}^p$, for any $p\ge 1$, from Garsia and Neveu Lemma (see \cite{BElK11} Lemma 3.3) we get:
\begin{equation}\label{KLp}
\begin{split}
\mathbb{E}\left[[\bar K]_{T}]^{p}\right]<+\infty\quad \hbox{ and } \quad  \mathbb{E}\left[[\underline K]_{T}]^{p}\right]<+\infty,\quad \forall p\ge 1
\end{split}
\end{equation}
Since $\sup_{0\le t\le T}{1\over \bar K_t}$ and $\sup_{0\le t\le T}{1\over \underline K_t}$ belong to $\mathrm{L}^p$ for any $p\ge 1$ and using \ref{KLp}, from \ref{eqK} we conclude using Cauchy Schwartz inequalities that for any $p\ge 1$:
\begin{equation*}
\mathbb{E}\left[[\bar M]^p_T\right]\le +\infty \quad \hbox{ and } \mathbb{E}\left[[\underline M]^p_T\right]\le +\infty
\end{equation*}
then using BDG inequalities, we conclude $\bar M$ and $\underline{M}$ belong to ${\cal M}^p_0$. Moreover if there exists a non negative constant $c$ such that
 $$\mathbb{E}\left[\exp(e^{C_{T}}|\eta_T|+\int_0^T e^{C_{s}} d\Lambda_s)\vert {\cal F}_\sigma\right]\le c,$$
then from  \ref{bornes} the processes $\bar K$ and $\underline K$ are bounded, using  \ref{eqK} and \ref{Kbornes} we conclude the martingales $\bar M$ and $\underline M$ are $\mathrm{BMO}$-martingales. 

\end{enumerate}

\end{proof}
\subsection{Stability results of quadratic exponential semimartingale}
Here, we  { present stability results for quadratic exponential
semimartingales which we shall use  for the construction of the
maximal solution of a class of quadratic BSDE's with jumps. We   first recall
a general stability theorem of Barlow and Protter \cite{BP90} for
a sequence of c\`adl\`ag special semimartingales} converging uniformly in
$\mathrm{L}^1$. We denote by $ X^* := \sup_{0\leq t \leq T}
|X_t|$.
\begin{Theorem}\label{theorem:BP90} Let $X^n$ be a
sequence of special semimartingales which {belongs } to $\mathcal{H}^1$
with canonical decomposition $ X^n = X_0^n + M^n - V^n$, and
{satisfies}:
\begin{equation}
\label{uniform:variation}
\mathbb{E} \Big[\int_0^T |dV_s^n| \, \Big] \leq C, \quad \mbox{and} \quad \mathbb{E} \, \big[ \big(M^n \big)^* \big] \leq C
\end{equation}
for some positive constant $C$. Assume that:
$$ \mathbb{E} \, \big[ \big(X^n - X\big)^* \big] \longrightarrow 0, \quad  \mbox{as} \quad  n \to \infty,$$
{where  $ X$ is an adapted process}, then $ X$ is a semimartingale
in $\mathcal{H}^1$  {with  canonical decomposition} $ X = X_0 + M
-V$ satisfying:
\begin{equation}
\label{estimate:variation}
\mathbb{E} \Big[\int_0^T |dV_s| \, \Big] \leq C, \quad \mbox{and} \quad \mathbb{E} \, \big[ \big(M \big)^* \big] \leq C
\end{equation}
and we have
\begin{equation}
\label{limite:variation}
\lim_{ n \to \infty} \mathbb{E} \, \big[ \big(V^n - V\big)^* \big] = 0 \quad \mbox{and} \quad  \lim_{ n \to \infty} \|M^n - M\|_{\mathcal{H}^1} = 0.
\end{equation}
\end{Theorem}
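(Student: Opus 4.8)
This is the stability theorem of Barlow and Protter \cite{BP90}; here is how I would organise a proof. \textbf{Step 1 ($X$ is a semimartingale).} Normalise the decompositions so that $M^n_0=V^n_0=0$ (hence $X^n_0\to X_0$, since $\mathbb{E}[(X^n-X)^*]\to 0$). Fix a simple predictable integrand $H$ with $|H|\le 1$. From $\mathbb{E}[(X^n-X)^*]\to 0$ one gets $(H\cdot X^n)_T\to (H\cdot X)_T$ in $\mathrm{L}^1$ (the difference is a fixed finite sum of increments of $X^n-X$). Writing $(H\cdot X^n)_T=(H\cdot M^n)_T-(H\cdot V^n)_T$: the finite-variation term is bounded by $\int_0^T|dV^n_s|$, which is bounded in $\mathrm{L}^1$ by \eqref{uniform:variation} and hence tight; the martingale term satisfies, by the Burkholder--Davis--Gundy inequality and $|H|\le 1$, $\mathbb{E}\big[(H\cdot M^n)^*\big]\le c\,\mathbb{E}\big[[M^n]_T^{1/2}\big]\le c'\,\mathbb{E}\big[(M^n)^*\big]\le c'C$, so it too is bounded in $\mathrm{L}^1$ and tight. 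Thus $\{(H\cdot X^n)_T:\ n\ge1,\ |H|\le1\ \text{simple}\}$ is bounded in probability, and letting $n\to\infty$ so is $\{(H\cdot X)_T:\ |H|\le1\ \text{simple}\}$; by the Bichteler--Dellacherie theorem $X$ is a semimartingale.

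\textbf{Step 2 (canonical decomposition and the bounds \eqref{estimate:variation}).} Along a subsequence the $\mathcal{H}^1$-bounded martingales $M^n$ converge weakly in $\mathcal{H}^1$ to some $M$, and the predictable finite-variation processes $V^n$, whose total variations are bounded in $\mathrm{L}^1$, converge weakly to some predictable finite-variation $V$; lower semicontinuity of the norms gives $\mathbb{E}[(M)^*]\le C$ and $\mathbb{E}[\int_0^T|dV_s|]\le C$. For every simple $H$ the map $N\mapsto(H\cdot N)_T$ is continuous (hence weakly continuous) from $\mathcal{H}^1$, and likewise from the variation space, into $\mathrm{L}^1$, so $(H\cdot X)_T=(H\cdot(X_0+M-V))_T$ for all such $H$; by uniqueness $X=X_0+M-V$ is the canonical decomposition of $X$, which is therefore a special semimartingale in $\mathcal{H}^1$ satisfying \eqref{estimate:variation}.

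\textbf{Step 3 (strong convergence \eqref{limite:variation}).} Set $N^n:=M^n-M$ and $W^n:=V^n-V$, so $N^n$ is a martingale, $W^n$ is predictable of finite variation, $N^n-W^n=X^n-X\to0$ uniformly in $\mathrm{L}^1$, both sequences are norm-bounded, and both converge weakly to $0$. The key is to upgrade the weak convergence of $W^n$ to strong: from the uniform $\mathrm{L}^1$-convergence of $(X^n-X)^*$ one shows, via a stopping-time argument, that the variations $\int_0^T|dW^n_s|$ are not merely bounded but uniformly integrable, and uniform integrability together with weak convergence to $0$ forces $\mathbb{E}[\int_0^T|dW^n_s|]\to0$. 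Then the inequality $(N^n)^*\le(X^n-X)^*+\int_0^T|dW^n_s|$ yields $\mathbb{E}[(N^n)^*]\to0$, i.e. $\mathbb{E}[(V^n-V)^*]\to0$, and the Burkholder--Davis--Gundy inequality gives $\|M^n-M\|_{\mathcal{H}^1}\to0$. Since the pair $(M,V)$ is the canonical decomposition of $X$ and hence does not depend on the chosen subsequence, the usual subsequence principle promotes the convergence to the whole sequence.

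\textbf{Main obstacle.} The delicate point is the weak-to-strong passage in Step 3, together with the weak-compactness extraction feeding Step 2: the spaces $\mathcal{H}^1$ and the variation space are not reflexive, so norm-boundedness does not by itself give weak compactness and weak convergence does not give norm convergence. One must convert the uniform $\mathrm{L}^1$-control on $X^n-X$ into genuine uniform integrability of the components $M^n-M$ and $V^n-V$; this, rather than the mere existence of the limiting semimartingale, is the technical heart of the Barlow--Protter argument.
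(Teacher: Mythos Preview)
The paper does not prove this theorem at all: it is merely \emph{recalled} from Barlow--Protter \cite{BP90} as an external input, with no argument given. So there is nothing to compare against on the paper's side; the question is whether your sketch stands on its own.

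Your outline is reasonable at a high level, and you correctly isolate the real difficulty in your ``Main obstacle'' paragraph. The problem is that the body of the sketch never actually clears that obstacle. Two places where the argument breaks:

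\emph{Step 2.} You write that along a subsequence ``the $\mathcal{H}^1$-bounded martingales $M^n$ converge weakly in $\mathcal{H}^1$'' and similarly for $V^n$ in the variation space. But as you yourself note, neither space is reflexive, so norm-boundedness does not give you a weakly convergent subsequence. You cannot simply assume the extraction works and then lament its difficulty afterwards; without it, you have no candidate $(M,V)$.

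\emph{Step 3.} Even granting weak convergence $W^n\rightharpoonup 0$ of the processes, this does \emph{not} imply that the total variations $\int_0^T|dW^n_s|$ tend to $0$ weakly in $L^1$, which is what you would need for the final implication ``uniform integrability $+$ weak convergence $\Rightarrow$ $\mathbb{E}[\int_0^T|dW^n_s|]\to0$''. A sequence of predictable processes can converge to $0$ at every time while their total variations stay bounded away from $0$ (think of increasingly oscillatory paths). The passage from weak convergence of $W^n$ to weak convergence of $\int|dW^n|$ is exactly the missing step, and nothing in your stopping-time remark supplies it.

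The actual proof in \cite{BP90} does not go through weak compactness. It is a direct Cauchy argument: one shows that $\mathbb{E}[(V^n-V^m)^*]$ is controlled by $\mathbb{E}[(X^n-X^m)^*]$ together with the uniform bounds \eqref{uniform:variation}, using the predictability of $V^n-V^m$ and a stopping/localisation that exploits the uniform variation bound to trade the $L^1$-sup control on $X^n-X^m$ for control on the compensator. Once $V^n$ is Cauchy in $\mathbb{E}[(\cdot)^*]$, the martingale part is Cauchy in $\mathcal{H}^1$ by BDG, and the limit decomposition follows. If you want to write a self-contained proof, that is the route to take; the weak-compactness strategy you propose cannot be made to work in these non-reflexive spaces without an additional idea you have not supplied.
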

\begin{Lemma}\label{convergence} Let $X^n$ a sequence of ${\cal S}_Q(|\eta_T|,\Lambda,C)$ semimartingales which canonical decomposition $X^n=X^n_0-V^n+M^n$ which converge  in ${\cal H}^1$ to some process $X$. Therefore the process $X$ which canonical decomposition $X=X_0-V+M$ is an adapted c\`adl\`ag process which belongs to ${\cal S}_Q(|\eta_T|,\Lambda,C)$
such that:
\begin{equation*}
\label{limite:variation}
\lim_{ n \to \infty} \mathbb{E} \, \big[ \big(V^n - V\big)^* \big] = 0 \quad \mbox{and} \quad  \lim_{ n \to \infty} \|M^n - M\|_{\mathcal{H}^1} = 0.
\end{equation*}

\end{Lemma}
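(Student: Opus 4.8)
The plan is to apply the Barlow--Protter stability theorem (Theorem~\ref{theorem:BP90}) to the sequence $(X^n)$ and then to identify the limit as a member of $\mathcal{S}_Q(|\eta_T|,\Lambda,C)$ by passing the nonlinear entropy inequalities of Theorem~\ref{generalui} to the limit. The observation that makes both moves work is that the a priori estimates of Proposition~\ref{aprioriestimates} are \emph{uniform over the class} $\mathcal{S}_Q(|\eta_T|,\Lambda,C)$: every $X^n$ satisfies the domination \eqref{bornes}, whose right-hand side $\mathbb{E}\big[\exp\big(p(e^{C_T}|\eta_T|+\int_0^T e^{C_s}\,d\Lambda_s)\big)\,\big|\,\mathcal{F}_t\big]$ does not depend on $n$, so every $L^p$-bound in the proof of Proposition~\ref{aprioriestimates} holds with a constant depending only on $(|\eta_T|,\Lambda,C)$ and $p$. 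In particular $\sup_n\mathbb{E}\big[[\bar M^n]_T+[\underline M^n]_T\big]<\infty$ and $\sup_n\mathbb{E}\big[\sup_{t\le T}\exp\big(p\,Y^{\Lambda,C}_t(|X^n|)\big)\big]<\infty$ for every $p\ge1$.

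First I would convert these into the uniform bounds \eqref{uniform:variation} demanded by Theorem~\ref{theorem:BP90}. Writing $M^n=(M^n)^c+U^n.\widetilde\mu$ and $\bar M^n,\underline M^n$ as in Proposition~\ref{aprioriestimates}, the elementary inequalities $|u|\le|e^u-1|+|e^{-u}-1|$ and $2\big[(e^u-u-1)+(e^{-u}+u-1)\big]\le|e^u-1|^2+|e^{-u}-1|^2$ (the latter being the estimate recorded in Remark~3.1) bound the expectations of $[M^n]_T$, $\langle(M^n)^c\rangle_T$, $j_T(\Delta(M^n)^d)$ and $j_T(-\Delta(M^n)^d)$ by $\mathbb{E}\big[[\bar M^n]_T+[\underline M^n]_T\big]$; combining this with the structure condition $\mathcal{Q}(\Lambda,C)$, which controls $\int_0^T|dV^n_s|$ by a fixed linear combination of $\langle(M^n)^c\rangle_T$, $\Lambda_T$, $|X^n|*C_T$ and the two $j$-terms, together with $\Lambda_T\le Y^{\Lambda,C}_T(|X^n|)$ and $|X^n|*C_T\le Y^{\Lambda,C}_T(|X^n|)$, yields $\sup_n\mathbb{E}\big[(M^n)^*\big]<\infty$ (via Burkholder--Davis--Gundy) and $\sup_n\mathbb{E}\big[\int_0^T|dV^n_s|\big]<\infty$, and in particular $X^n\in\mathcal{H}^1$. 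Since convergence in $\mathcal{H}^1$ implies $\mathbb{E}\big[(X^n-X)^*\big]\to0$, Theorem~\ref{theorem:BP90} then applies and delivers in one stroke that $X$ is a càdlàg semimartingale in $\mathcal{H}^1$ with canonical decomposition $X=X_0-V+M$, and that $\mathbb{E}\big[(V^n-V)^*\big]\to0$ and $\|M^n-M\|_{\mathcal{H}^1}\to0$, which is the displayed convergence.

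It then remains to prove $X\in\mathcal{S}_Q(|\eta_T|,\Lambda,C)$. Along a subsequence $X^n_t\to X_t$ a.s.\ for each $t$, and since the domination $|X^n_t|\le\bar\rho_t\big[e^{C_{t,T}}|\eta_T|+\int_t^T e^{C_{t,s}}\,d\Lambda_s\big]$ has an $n$-free right-hand side it passes to $X$; in particular $|X_T|\le|\eta_T|$, so $\exp(|X_T|)\in L^1$ and $X\in\mathcal{D}_{exp}$. For the structure condition I would invoke Theorem~\ref{generalui}: each $X^n$, being a $\mathcal{Q}(\Lambda,C)$-semimartingale in $\mathcal{D}_{exp}$, satisfies the entropy inequalities \eqref{entropiesinequalities1}; for fixed stopping times $\sigma\le\tau\le T$ the variables $\exp\big(X^n_\tau+\Lambda_{\sigma,\tau}+|X^n|*C_{\sigma,\tau}\big)$ are uniformly integrable (by the uniform exponential bound coming from \eqref{bornes}--\eqref{bornes1}) and converge in probability to $\exp\big(X_\tau+\Lambda_{\sigma,\tau}+|X|*C_{\sigma,\tau}\big)$ — using $\mathbb{E}[(X^n-X)^*]\to0$ to handle the term $|X^n|*C_{\sigma,\tau}$ — so the corresponding conditional expectations converge in $L^1$ and, along a subsequence, a.s.; letting $n\to\infty$ in \eqref{entropiesinequalities1} for $X^n$ and for $-X^n$ gives \eqref{entropiesinequalities1} for $X$, whence (Theorem~\ref{generalui}) $X$ is a $\mathcal{Q}(\Lambda,C)$-semimartingale, and with the domination bound $X\in\mathcal{S}_Q(|\eta_T|,\Lambda,C)$.

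I expect the main obstacle to be the uniform-estimate step: Proposition~\ref{aprioriestimates} is phrased through $\bar M^n,\underline M^n$, so one has to squeeze from it a control of $(M^n)^*$ and $\int_0^T|dV^n_s|$ that is genuinely uniform in $n$ and channelled through the structure condition; the companion delicate point is the limit passage used to identify $X$, where the uniform exponential integrability furnished by \eqref{bornes}--\eqref{bornes1} is precisely what allows one to commute the exponential, the conditional expectation and the limit. One should also verify that the finite-variation part produced by Barlow--Protter is the canonical (predictable, continuous) one and that $X$ stays quasi-left-continuous, so that membership in the class $\mathcal{Q}(\Lambda,C)$ is meaningful for $X$; these follow from the quasi-left-continuity of the $X^n$ and the identification provided by Theorem~\ref{generalsubmartingale}.
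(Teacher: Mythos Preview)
Your proposal is correct and follows essentially the same strategy as the paper: extract uniform $\mathcal{H}^1$ bounds on $(M^n)$ and $(V^n)$ from Proposition~\ref{aprioriestimates} in order to apply the Barlow--Protter theorem, and pass the entropy inequalities of Theorem~\ref{generalui} to the limit along an a.s.\ convergent subsequence (with dominated convergence / uniform integrability) to place $X$ in $\mathcal{S}_Q(|\eta_T|,\Lambda,C)$. The paper argues in the reverse order and is much terser---in particular it does not spell out, as you do, the conversion of the $[\bar M^n],[\underline M^n]$ estimates into bounds on $(M^n)^*$ and $\int_0^T|dV^n_s|$---but the content is the same.
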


\begin{proof} Let consider $X^n=X^n_0-V^n+{(M^c)}^n+U^n.\widetilde \mu$ a sequence of ${\cal S}_Q(|\eta_T|,\Lambda,C)$ semimartingales. Firstly let prove that if the sequence $X^n$ converge to the process $X$ then this limit belongs to the space ${\cal S}_Q(|\eta_T|,\Lambda,C)$. The sequence $X^n\in {\cal S}_Q(|\eta_T|,\Lambda,C)$, hence for each $n\in \N$ and for any stopping times $\sigma \le T$:
\begin{equation}\label{SeqX}
-\bar\rho_\sigma(-\eta_T+\Lambda_{\sigma,T}+|X^n|*C_{\sigma,T})\le X^n_\sigma\le \bar\rho_\sigma(\eta_T+\Lambda_{\sigma,T}+|X^n|*C_{\sigma,T}),\quad a.s
\end{equation}
\noindent and we get also:
\begin{equation}\label{BoundSeqX}
|X^n_\sigma|\le \bar\rho_\sigma\left[e^{C_{\sigma,T}}|\eta_T|+\int_\sigma^T e^{C_{\sigma,s}}d\Lambda_s\right],\quad a.s
\end{equation}
\noindent Since the sequence $X^n$ converges in ${\cal H}^1$, we can extract a subsequence which converges uniformly almost surely to the limit $X$, using the dominated convergence and taking the limit of this subsequence in \ref{SeqX} and \ref{BoundSeqX}, we conclude the limit $X$ belongs to the space ${\cal S}_Q(|\eta_T|,\Lambda,C)$. 
Since the limit $X$ is a ${\cal Q}(\Lambda,C)$-semimartingale, from Theorem \ref{generalsubmartingale} the process $X$ is c\`adl\`ag. Let now prove the convergence of the martingale part and finite variation part,  the priori estimates of their limits.  
From the first assertion of Proposition \ref{aprioriestimates}, there exists a constant $C_1>0$ such that:
$$\mathbb{E}\left[\int_0^T|dV^n_s|\right]\le \mathbb{E}\left[\int_0^T {1\over 2}d\langle {(M^c)}^n\rangle_s+\int_E[g(U^n(s,x))+g(-U^n(s,x))]\nu(ds,dx)\right]\le C_1.$$
\noindent Moreover From the second assertion of Proposition \ref{aprioriestimates}, using BDG inequalities, there exists  constants $c_2$ and $C_2>0$ such that:
$$\mathbb{E} \big[ \big(M \big)^*_T \big]\le c_2\mathbb{E}\left[ \langle M^n\rangle^{{1\over 2}}_T\right] \le C_2.$$ 
\noindent Therefore assuming the sequence $X^n$ converges to a process $X$ which canonical decomposition $X=X_0-V+M$, taking $C=\max(C_1,C_2)$, from Barlow and Protter Theorem \ref{theorem:BP90}, we get the expected result.
 \end{proof}

\section{Application of quadratic exponential semimartingales: quadratic BSDEs with jumps}
In stochastic optimization problem with exponential utility (see \cite{Bech06}, \cite{Mor07}), or robust optimization with entropy penality see(\cite{BMS07}, \cite{JMN10}), using dynamic programing , the authors solved the problem using quadratic Backward Stochastic Differential Equations (BSDE) with bounded terminal condition in most of these papers. In the papers \cite{BElK11} and \cite{BriandHu07}, the authors dealt with the problem with unbounded terminal condition but they worked in continuos filtration. In this part, we use quadratic exponential semimartingales to find the solution of a quadratic BSDE with jumps in a general set up where the terminal condition is unbounded.

\subsection{Quadratic Exponential BSDE}
We consider the stochastic basis defined above $(\Omega,{\mathcal F},\mathbb{F},\mathbb{P})$ with finite time horizon $T<+\infty$. On this basis, let $W={(W_t)}_{t\ge 0}$ be a $d$-dimensional standard Brownian motion and let $\mu$ the random measure defined above such that $\nu$ is equivalent to a product measure $\lambda\otimes dt$ with density $\xi$ satisfying $\nu(\omega,dt,dx)=\xi(\omega,t,x)\lambda(dx)dt,$ where $\lambda$ is a $\sigma$-finite measure on $(E,{\cal E})$ satisfying $\int_E |x|^2\lambda(dx)<+\infty$ and where the density $\xi$ is a measure, bounded nonnegative function such that for some constant $C_\nu$:
$$0\le \xi(\omega,t,x)\le C_\nu<+\infty,\quad \P\otimes \lambda\otimes dt-\hbox{ a.e }.$$
\noindent That implies in particular $\nu\left([0,T]\times E\right)\le C_\nu T \lambda(E).$ We assume the following representation Theoem for any square integrable martingale $M$:
$$M=M_0+Z.W+U.\widetilde \mu,$$
\noindent where $Z$ and $U$ are predictable processes such that $\mathbb{E}\left[\int_0^T \big(|Z_t|^2+\int_E |U(t,x)|^2\xi(t,x)\lambda(dx)\big)dt\right]<+\infty.$ We note $\mathbb{H}^2$ resp($\mathbb{H}^2_\lambda$ ) the set of predictable process $Z$ resp( predictable process $U$) satisfying this square integrability condition. Let define the following norms $||.||_{\mathbb{H}^{2p}}$ and $||.||_{\mathbb{H}^{2p}_\lambda}$, for $p\ge 1$:
$$||Z||_{\mathbb{H}^{2p}}:={\left(\mathbb{E}\left[\int_0^T |Z_s|^2ds \right]^{p\over 2}\right)}^{1\over p}, \hbox{ for any predictable process } Z,$$
\noindent and 
$$||U||_{\mathbb{H}^{2p}_\lambda}:={\left(\mathbb{E}\left[\int_0^T |U|^2_{s,\lambda}ds \right]^{p\over 2}\right)}^{1\over p}, \hbox{ for any predictable process } U,$$
\noindent where \\\\\noindent 
$|U|^2_{s,\lambda}:=\int_E |U(s,x)|^2 \xi(s,x)\lambda(dx)$. 

\begin{Definition}(Quadratic Exponential BSDE) We call quadratic exponential BSDE associated to $(f,\eta_T)$ and  with parameters $(l,c,\delta)$ (in short terms we note $q_{exp}(l,c,\delta)$ BSDE), the following stochastic differential equation:
\begin{equation}\label{QBSDEjumps}
-dY_t=f(t,Y_t,Z_t,U_t)-Z_tdW_t-\int_E U(t,x)\widetilde \mu(dt,dx),\quad Y_T=\eta_T,
\end{equation}
\noindent where the coefficient satisfying the following conditions:
\begin{enumerate}
\item (\it{Continuity condition}): for all $t\in [0,T]$, $(y,z,u)\longrightarrow f(t,y,z,u)$ is continuous.
\vspace{2mm}
\item (\it{ Growth condition}): for all $(y,z,u)\in \R\times \R^d \times L^2$, $t\in [0,T]$: $\P$-a.s,
$$\underline{q}(t,y,z,u)={1\over \delta}j_t(-\delta u)-{\delta\over 2}|z|^2-l_t-c_t|y|\le f(t,y,z,u)\le {1\over \delta}j_t(\delta u)+{\delta\over 2}|z|^2+l_t+c_t|y|=\bar q(t,y,z,u).$$
\item \textbf{ $({\cal A}_{\gamma}$)} condition : there exists a process $\gamma$ such that for all $(y,z)\in \R\times \R^d$, $t\in [0,T]$: $\P$-a.s, 
\begin{equation}\label{Lipssaut}
f(t,y,z,u)-f(t,y,z,\bar u)\le \int_E \gamma_t[u(x)-\bar u(x)]\xi(t,x)\lambda(dx),
\end{equation}
\noindent where the process  $\gamma.\widetilde \mu$ belongs to the space ${\cal U}_{\exp}$ and $\gamma>-1$ .
\end{enumerate}
\end{Definition}

\noindent  A solution of a  Quadratic Exponential BSDE \eqref{QBSDEjumps}  is a   a triple $(Y,Z,U)$  of predictable  processes satisfying:

$$\int_0^T|Z^2_s|ds <+\infty,\quad  \int_0^T {(e^{U(t,x)}-1)}^2\xi(t,x)\lambda(dx)<+\infty \; \hbox{ and } \; \int_0^T {(e^{-U(t,x)}-1)}^2\xi(t,x)\lambda(dx) \hbox{  a.s  }$$
In particular case, where the coefficient satisfies a Lipschitz condition and the condition ${\cal A}_{\gamma}$ with $-1<\gamma<c$, for some positive constant $c$, there exists a unique solution of the BSDE see(\cite{Bech06}, \cite{Mor07}).
\begin{Theorem}\label{LipsTh} i) Let consider the BSDE \eqref{QBSDEjumps} with terminal value $Y_T=\eta_T\in L^2(\Omega,{\cal F}_T)$ where the coefficient satisfying the following conditions:
\begin{enumerate}
\item \it{Continuity and Integrability condition}: for all $t\in [0,T]$, $(y,z,u)\longrightarrow f(t,y,z,u)$ is continuous and satisfying the integrability condition:
\begin{equation}\label{Integ}
\mathbb{E}\left[\int_0^T |f(t,0,0,0)|^2dt\right]<+\infty.
\end{equation}
\vspace{2mm}
\item \it{ Lipschitz condition}: there exists a nonnegative constant $C$ such that for all $t\in [0,T]$:
\begin{equation}\label{Lipschitz condition}
 |f(t,y,z,u)-f(t,\bar y,\bar z,\bar u)|\le C\left[ |y-\bar y|+|z-\bar z|+|u-\bar u|_{t,\lambda}\right].
\end{equation}
\item \it{The bound $({\cal A}_{\gamma}$) condition}: the coefficient $f$ satisfies the (${\cal A}_{\gamma}$) condition and there exists a nonnegative constant $c$ such that $-1<\gamma\le c.$ 
\end{enumerate}
\noindent then there exists a unique triple $(Y,Z,U)\in{\cal S}^2\times \mathbb{H}^2\times \mathbb{H}^2_\lambda$ solution of the BSDE \eqref{QBSDEjumps}.\\\\\noindent 
ii) Moreover  for any BSDE with terminal value $\bar\eta_T\le \eta_T$ and coefficient $\bar f\le f$ satisfying the same last assumptons as $f$, the solution ($\bar Y,\bar Z,\bar U)$ associated to this BSDE satisfies $\bar Y\le Y$.
\end{Theorem}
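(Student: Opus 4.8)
The plan is to prove (i) by a contraction--mapping argument in a complete space of triples endowed with an exponentially weighted norm, and to prove (ii) by linearising the equation and performing a Dol\'eans--Dade / change--of--variable computation in the spirit of Royer, in which the one--sided inequality $(\mathcal{A}_\gamma)$ plays exactly the role that a two--sided Lipschitz bound in $u$ would otherwise play. Note that $(\mathcal{A}_\gamma)$ is needed only for the comparison assertion (ii), not for the existence/uniqueness part (i).

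\textbf{Proof of (i).} For $\beta>0$ put $\|(Y,Z,U)\|_\beta^2:=\mathbb{E}\int_0^T e^{\beta s}\big(|Y_s|^2+|Z_s|^2+|U|^2_{s,\lambda}\big)\,ds$, a norm on the complete space $\mathcal{K}$ of triples for which it is finite. Given $(y,z,u)\in\mathcal{K}$, the Lipschitz bound \eqref{Lipschitz condition} together with the integrability assumption \eqref{Integ} give $\mathbb{E}\int_0^T|f(s,y_s,z_s,u_s)|^2\,ds<+\infty$, so that $N_t:=\mathbb{E}\big[\eta_T+\int_0^T f(s,y_s,z_s,u_s)\,ds\,\big|\,\mathcal{F}_t\big]$ is a square--integrable martingale; the martingale representation theorem stated above then yields $(Z,U)\in\mathbb{H}^2\times\mathbb{H}^2_\lambda$ with $N_t=N_0+\int_0^t Z_s\,dW_s+\int_0^t\int_E U(s,x)\widetilde\mu(ds,dx)$. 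Setting $Y_t:=N_t-\int_0^t f(s,y_s,z_s,u_s)\,ds$ defines a map $\Phi:\mathcal{K}\to\mathcal{K}$, $\Phi(y,z,u)=(Y,Z,U)$, whose fixed points are exactly the solutions of \eqref{QBSDEjumps}. Applying It\^o's formula to $e^{\beta s}|Y_s-Y'_s|^2$ for two inputs, together with \eqref{Lipschitz condition} and Young's inequality, gives $\|\Phi(y,z,u)-\Phi(y',z',u')\|_\beta^2\le\frac{K(C)}{\beta}\,\|(y,z,u)-(y',z',u')\|_\beta^2$, so $\Phi$ is a strict contraction for $\beta$ large and admits a unique fixed point. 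That this fixed point lies in $\mathcal{S}^2\times\mathbb{H}^2\times\mathbb{H}^2_\lambda$ follows from Doob's $\mathrm{L}^2$--inequality applied to $N$ and the estimate on $\int_0^\cdot|f(s,y_s,z_s,u_s)|\,ds$; uniqueness in that space follows since every solution there is a fixed point of $\Phi$.

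\textbf{Proof of (ii).} Let $(Y,Z,U)$ and $(\bar Y,\bar Z,\bar U)$ solve the BSDEs associated with $(f,\eta_T)$ and $(\bar f,\bar\eta_T)$, and put $\delta Y=Y-\bar Y$, $\delta Z=Z-\bar Z$, $\delta U=U-\bar U$, so $\delta Y_T=\eta_T-\bar\eta_T\ge0$. Telescoping the driver difference, $f(t,Y_t,Z_t,U_t)-\bar f(t,\bar Y_t,\bar Z_t,\bar U_t)=\alpha_t\,\delta Y_t+\langle\beta_t,\delta Z_t\rangle+\big[f(t,\bar Y_t,\bar Z_t,U_t)-f(t,\bar Y_t,\bar Z_t,\bar U_t)\big]+\big[f-\bar f\big](t,\bar Y_t,\bar Z_t,\bar U_t)$, where $\alpha,\beta$ are predictable and bounded by $C$ by \eqref{Lipschitz condition}; the last bracket is $\ge0$ since $\bar f\le f$, and applying $(\mathcal{A}_\gamma)$ to $f$ with the roles of $U$ and $\bar U$ exchanged bounds the jump bracket below by $\int_E\gamma_t(x)\,\delta U(t,x)\,\xi(t,x)\lambda(dx)$. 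Hence $-d\delta Y_t=\big[\alpha_t\delta Y_t+\langle\beta_t,\delta Z_t\rangle+\int_E\gamma_t\delta U\,\xi\,\lambda\big]dt+dK_t-\delta Z_t\,dW_t-\int_E\delta U(t,x)\widetilde\mu(dt,dx)$ for some increasing process $K$. Introduce $\Gamma$ solving $d\Gamma_t=\Gamma_{t^-}\big[\alpha_t\,dt+\langle\beta_t,dW_t\rangle+\int_E\gamma_t(x)\widetilde\mu(dt,dx)\big]$, $\Gamma_0=1$, which is strictly positive because $\gamma>-1$; integration by parts shows that the finite--variation part of $\Gamma_t\delta Y_t$ equals $-\Gamma_{t^-}\,dK_t\le0$, so $\Gamma\delta Y$ is a local supermartingale. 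Once it is known to be a genuine supermartingale, $\Gamma_t\delta Y_t\ge\mathbb{E}[\Gamma_T\delta Y_T\,|\,\mathcal{F}_t]\ge0$ since $\Gamma_T>0$ and $\delta Y_T\ge0$, and dividing by $\Gamma_t>0$ gives $Y\ge\bar Y$.

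\textbf{Main obstacle.} The only delicate point is turning the local supermartingale $\Gamma\delta Y$ into a true supermartingale, i.e. controlling the integrability of $\Gamma$, of $1/\Gamma$, and of the stochastic integrals entering the integration--by--parts formula. One writes $\Gamma_t=\exp\!\big(\int_0^t\alpha_s\,ds\big)\,\mathcal{E}_t(\beta.W+\gamma.\widetilde\mu)$: the prefactor is bounded since $|\alpha|\le C$ and $T<\infty$, $\mathcal{E}(\beta.W)$ has moments of every order because $\beta$ is bounded, and $\mathcal{E}(\gamma.\widetilde\mu)$ is uniformly integrable by the hypothesis $\gamma.\widetilde\mu\in\mathcal{U}_{\exp}$, with $-1<\gamma\le c$ supplying the two--sided control of its jumps. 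Combining this with $Y,\bar Y\in\mathcal{S}^2$, $Z,\bar Z\in\mathbb{H}^2$, $U,\bar U\in\mathbb{H}^2_\lambda$ and a localisation along stopping times that reduce $\mathcal{E}(\beta.W+\gamma.\widetilde\mu)$, a dominated/monotone convergence argument closes the gap. Everything else --- the contraction estimate in (i) and the algebra of the telescoping and of the product rule in (ii) --- is routine.
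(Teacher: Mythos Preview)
The paper does not supply its own proof of this theorem; it is stated as a known result, with the sentence immediately preceding it referring the reader to Becherer \cite{Bech06} and Morlais \cite{Mor07}. Your argument is precisely the standard one found in that literature: a weighted--norm contraction (Pardoux--Peng / Barles--Buckdahn--Pardoux) for part (i), and the Royer--type linearisation with an adjoint process $\Gamma$ for the comparison in part (ii). The outline is correct.

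Two small remarks on (ii). First, in the $(\mathcal{A}_\gamma)$ condition the process $\gamma$ is implicitly allowed to depend on the pair $(u,\bar u)$---this is how the paper itself uses the condition in the Appendix (Lemma~\ref{ineqsaut})---so your ``exchange of roles of $U$ and $\bar U$'' yields a possibly different process $\gamma'=\gamma^{\bar U,U}$, which is harmless since only the uniform constraint $-1<\gamma'\le c$ and $\gamma'.\widetilde\mu\in\mathcal{U}_{\exp}$ enter the argument. Second, your ``main obstacle'' paragraph correctly isolates the only non--routine point; the cleanest way to close it, rather than controlling $\sup_t\Gamma_t$ directly, is to perform a Girsanov change to $d\mathbb{Q}=\mathcal{E}(\beta.W+\gamma.\widetilde\mu)_T\,d\mathbb{P}$ (legitimate since this exponential is a uniformly integrable martingale by hypothesis) and observe that $e^{\int_0^\cdot\alpha_s ds}\delta Y$ is then a local $\mathbb{Q}$--supermartingale bounded below by a $\mathbb{Q}$--integrable random variable, after which a standard localisation and Fatou argument concludes.
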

\begin{Remark} (Comparison result)
\begin{enumerate}
\item The comparison result holds true also in the case of the both coefficients satisfies the Lipschitz and the bound $({\cal A}_\gamma)$ conditons.
\item The bound $({\cal A}_\gamma)$ condition can be substituted to the $({\cal A}_\gamma)$ condition since we need only $\gamma.\widetilde \mu$ should be in the space ${\cal U}_{\exp}$ to ensure the comparison result.
\end{enumerate}
\end{Remark}
\subsubsection{Existence of solution for quadratic BSDE with jumps}  
We prove the solution of the quadratic exponential BSDE using the properties of quadratic exponential semimartingales. we construct a sequence of quadratic exponential semimartingales which converges to some limit. Therefore using stability result, we find the convergence of the martingale part of the limit and conclude. 
\begin{Remark} Let consider the triple $(Y,Z,U)$ solution of the $q_{\exp}(l,c,\delta)$ BSDE, then $Y$ is a ${\cal Q}_{\exp}(\Lambda,C,\delta)$- semimartingale with $\Lambda_t=\int_0^t l_s ds$, $C_t=\int_0^t c_s ds$, $t\le T.$ In all the rest of paper without losing any generality, we assume $\delta=1$ and $c$ is bounded . 
\end{Remark}

\begin{Assumption}\label{Integcond} Let assume the integrability condition:
$$\forall\gamma>0,\quad \mathbb{E}\left[\exp(\gamma(e^{C_T}|\eta_T|+\int_0^T e^{C_s} d\Lambda_s))\right]<+\infty.$$
\end{Assumption}

\begin{Proposition}\label{Seq}(Construction of  sequences). Let assume \ref{Integcond} and consider the $q_{\exp}(l,c,1)$-BSDE associated to $(f,\eta_T)$.  We set $\bar f=f1_{f>0}$ and $\underline{f}=f1_{f\le 0}$ and we define for each $n,m\in \mathbb{N}$ the sequences of coefficients $\bar f^n=\bar f\vee\bar b^n$, $\underline{f}^m=\underline{f}\wedge\underline{b}^m$,  $\bar{q}^n=\bar q\vee\bar{b}^n$ and $\underline{q}^m=\underline{q}\wedge\underline{b}^m$   where the regularizing functions $\bar b^n$ and $\underline{b}^m$ are the convex functions with linear growth defined by $\bar{b}^n(w,r,v)=n|w|+n|r|+n|v|$ and $\underline{b}^m(w,r,v)=-m|w|-m|r|-m|v|$. The symbols $\vee$ and $\wedge$  stands for inf-convolution  and sup-convolution. 
\begin{enumerate}
\item The sequence $\bar f^n$ and $\bar q^n$  resp( the sequence $\underline{f}^m$ and $\underline{q}^m$) are increasing and converge to $\bar f$, $\bar q$ resp ( are  decreasing  and converge to $\underline{f}$, $\underline{q}$). Moreover the sequence $\underline{f}^m, \underline{q}^m$, $\bar f^n$ and $\bar q^n$ satisfy the Lipschitz condition \eqref{Lipschitz condition}. 
\item The sequences ${(\bar f^n)}_{n\in \N}$ resp( the sequence ${(\underline{f}^m)}_{m\in \N}$) satisfies $0\le \bar f^n\le \bar q^n\le \bar q$ (resp $\underline{q}\le \underline{q}^m\le \underline{f}^m\le 0)$, for each $n,m\in \N$. Moreover 
$$ \underline{q}\le \bar f^n+\underline{f}^m \le  \bar q.$$

 \end{enumerate}
\end{Proposition}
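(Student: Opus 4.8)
The statement is an exercise on the inf-convolution and sup-convolution with a linear-growth kernel (Lipschitz regularization); the only structural input is the sign of the bounding generators. Write $\|(y,z,u)\|:=|y|+|z|+|u|_{t,\lambda}$ for the metric attached to the kernels $\bar b^n=n\|\cdot\|$ and $\underline b^m=-m\|\cdot\|$, and recall the classical facts: if $g$ is lower semicontinuous and bounded from below, then $g^{(n)}:=g\,\vee\,\bar b^n$, i.e.\ $g^{(n)}(x)=\inf_{x'}\{g(x')+n\|x-x'\|\}$, is real valued and $n$-Lipschitz, satisfies $g^{(n)}\le g^{(n+1)}\le g$, and $g^{(n)}\uparrow g$ pointwise; symmetrically, if $g$ is upper semicontinuous and bounded from above, then $g_{(m)}:=g\,\wedge\,\underline b^m$, i.e.\ $g_{(m)}(x)=\sup_{x'}\{g(x')-m\|x-x'\|\}$, is $m$-Lipschitz, satisfies $g\le g_{(m+1)}\le g_{(m)}$, and $g_{(m)}\downarrow g$. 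Both operations are monotone in $g$: $g_1\le g_2$ forces $g_1^{(n)}\le g_2^{(n)}$ and $(g_1)_{(m)}\le(g_2)_{(m)}$.

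The first thing I would record is the sign structure hidden in the growth condition. Since $j_t(\pm\,\cdot)\ge 0$, $l_t\ge 0$ and $c_t\ge 0$ we have $\underline q\le 0\le\bar q$; combining with $\underline q\le f\le\bar q$ and the definitions $\bar f=\max(f,0)$, $\underline f=\min(f,0)$ yields the pointwise chain $\underline q\le\underline f\le 0\le\bar f\le\bar q$. Moreover $(y,z,u)\mapsto\bar q(t,y,z,u)$ and $(y,z,u)\mapsto-\underline q(t,y,z,u)$ are lower semicontinuous (Fatou's lemma for the $j_t(\pm\,\cdot)$ term, continuity of the remaining terms) and bounded from below by $0$, while $\bar f$ and $-\underline f$ are continuous; hence the four regularizations above are all legitimate and finite valued.

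Assertion 1 then follows at once: $\bar f^n=\bar f^{(n)}$ and $\bar q^n=\bar q^{(n)}$ increase to $\bar f$ and $\bar q$, $\underline f^m=\underline f_{(m)}$ and $\underline q^m=\underline q_{(m)}$ decrease to $\underline f$ and $\underline q$, and each of the four is $n$- (resp.\ $m$-)Lipschitz with respect to $\|\cdot\|$, hence satisfies \eqref{Lipschitz condition} with constant $C=n$ (resp.\ $C=m$). For assertion 2: $0\le\bar f^{(n)}$ because $\bar f\ge 0$ and $\bar b^n\ge 0$; $\bar f^{(n)}\le\bar q^{(n)}$ by monotonicity applied to $\bar f\le\bar q$; $\bar q^{(n)}\le\bar q$ by the domination property; symmetrically $\underline q\le\underline q_{(m)}\le\underline f_{(m)}\le 0$ (domination, monotonicity applied to $\underline q\le\underline f$, and $\underline f_{(m)}\le 0$ since $\underline f\le 0$ with the supremum attained at $x'=x$). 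Adding the two chains at a fixed $(t,y,z,u)$ gives $\underline q\le 0+\underline q^m\le\bar f^n+\underline f^m\le\bar q^n+0\le\bar q$, which is the last claim.

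The one place needing a genuine argument — and the only place where the infinite-dimensional, superlinearly growing nature of $\bar q$ (the jump variable ranges over an $L^2$-space, $z$ enters quadratically) is felt — is the pointwise monotone convergence $g^{(n)}\uparrow g$ and $g_{(m)}\downarrow g$. For the first, I would fix $x$ and pick near-minimizers $x_n'$ with $g(x_n')+n\|x-x_n'\|\le g^{(n)}(x)+\tfrac1n\le g(x)+\tfrac1n$; since $g\ge 0$ this forces $\|x-x_n'\|\le(g(x)+1)/n\to 0$, whence $g^{(n)}(x)\ge g(x_n')-\tfrac1n$ and lower semicontinuity gives $\liminf_n g^{(n)}(x)\ge g(x)$, which together with $g^{(n)}(x)\le g(x)$ and monotonicity in $n$ is the desired convergence (the same computation covers $g(x)=+\infty$). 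The symmetric argument, with $\sup$ replacing $\inf$ and the sign of the kernel reversed, handles $g_{(m)}\downarrow g$. Everything else is the finite-dimensional bookkeeping recalled above, so I do not anticipate any obstacle beyond this routine metric-space lemma.
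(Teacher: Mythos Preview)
Your proof is correct and follows essentially the same route as the paper: both arguments reduce the statement to the standard monotonicity, Lipschitz, and domination properties of inf-/sup-convolution with a linear kernel, together with the sign structure $\underline q\le\underline f\le 0\le\bar f\le\bar q$ forced by the growth condition. Your write-up is in fact more complete than the paper's, which simply invokes ``the properties of infconvolution and supconvolution'' and displays the chain $0\le\bar f^n\le\bar q^n\le\bar q$ without spelling out the pointwise convergence $g^{(n)}\uparrow g$; your near-minimizer argument (using $g\ge 0$ to force $\|x-x_n'\|\to 0$, then lower semicontinuity) is exactly the missing justification, and is the standard one.
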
 
\begin{proof}
 Let consider  the coefficient $f$ associated to the $q_{exp}(l,c,1)$ BSDE. Using the properties of infconvolution and supconvolution, we deduce all the sequences of coefficients defined above satisfy the Lipschitz condition, moreover we find the monotone property comparing using the definition of theses sequences. Let  prove the lower and upper bound of $\bar f^n$ and $\underline{f}^m$ for each $n,m\in \N$:
$$0\le \inf_{w,r,v}\{\bar{f}(t,w,r,v)+n|y-w|+n|z-r|+n|u-v|\}\le \inf_{w,r,v}\{\bar q(t,w,r,v)+n|y-w|+n|z-r|+n|u-v|\}.$$ 
\noindent then we find $0\le \bar{f}^n\le \bar{q}^n\le \bar q$. By similar arguments we find  for each $m\in \N$, $0\ge\underline{f}^m\ge \underline{q}^m\ge\underline{q}$, hence we conclude $0\le \bar {f}^n\le \bar q^n\le \bar q$ and $\underline{q}\le \underline{q}^m \le \underline{f}^m\le 0$. Moreover, using the last inequalities we deduce $\underline{q}\le f^n+f^m\le \bar q,$ for each $n,m\in \N.$
\end{proof}

\begin{Theorem}Let assume \ref{Integcond}, there exists a triple $(Y,Z,U)\in {\cal S}_Q(|\eta_T|,\Lambda,C)\times \mathbb{H}^{2 p} \times \mathbb{H}^{2 p}_\lambda $ solution of the $q_{\exp}(l,c,1)$ BSDE  associated to $(f,\eta_T)$.
\end{Theorem}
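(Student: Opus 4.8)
The plan is to realize the solution as a monotone double limit of solutions of Lipschitz BSDEs, and to pass to the limit by means of the quadratic exponential semimartingale stability results established above; this is the jump analogue of the scheme of Barrieu and El Karoui \cite{BElK11}. Split $f=\bar f+\underline f$ and introduce, as in Proposition \ref{Seq}, the Lipschitz drivers $\bar f^n\uparrow\bar f$ and $\underline f^m\downarrow\underline f$, together with $\bar q^n\uparrow\bar q$, $\underline q^m\downarrow\underline q$. Assumption \ref{Integcond} forces $\mathbb{E}[\exp(\gamma|\eta_T|)]<+\infty$ for every $\gamma>0$, so $\eta_T\in L^2$; moreover $|f^{n,m}(t,0,0,0)|\le l_t$, which (after a routine truncation of $l$ if an $\mathbb{H}^2$ bound on the driver is needed, removed in a final limit) lets Theorem \ref{LipsTh} apply. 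Hence for each $(n,m)$ there is a unique triple $(Y^{n,m},Z^{n,m},U^{n,m})\in{\cal S}^2\times\mathbb{H}^2\times\mathbb{H}^2_\lambda$ solving \eqref{QBSDEjumps} with driver $f^{n,m}:=\bar f^n+\underline f^m$ and terminal value $\eta_T$.

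Next I would derive \emph{uniform} a priori estimates. By Proposition \ref{Seq} one has $\underline q\le f^{n,m}\le\bar q$, so each $Y^{n,m}$ satisfies the structure condition ${\cal Q}(\Lambda,C,1)$; comparing the Lipschitz BSDE for $Y^{n,m}$ with those driven by the Lipschitz truncations $\bar q^n$, $\underline q^m$ of the canonical generators, letting $n,m\to\infty$ in the comparison, and using the entropic processes together with Theorem \ref{generalui} and Assumption \ref{Integcond}, one obtains the dominated inequality $|Y^{n,m}_t|\le\bar\rho_t[\,e^{C_{t,T}}|\eta_T|+\int_t^T e^{C_{t,s}}d\Lambda_s\,]$, that is, $Y^{n,m}\in{\cal S}_Q(|\eta_T|,\Lambda,C)$ with a bound independent of $(n,m)$. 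Proposition \ref{aprioriestimates} then gives, uniformly in $(n,m)$, that $\bar M^{n,m}=(M^c)^{n,m}+(e^{U^{n,m}}-1).\widetilde\mu$ and $\underline M^{n,m}=-(M^c)^{n,m}+(e^{-U^{n,m}}-1).\widetilde\mu$ belong to ${\cal U}_{\exp}\cap{\cal M}^p_0$ for all $p\ge1$, and that $\mathbb{E}[\int_0^T|dV^{n,m}_s|]$ and $\mathbb{E}[(\bar M^{n,m})^*]$ are bounded by constants depending only on $|\eta_T|,\Lambda,C$; in particular $(Z^{n,m},U^{n,m})$ stays bounded in $\mathbb{H}^{2p}\times\mathbb{H}^{2p}_\lambda$ for every $p$.

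I would then pass to the limit. By the comparison assertion of Theorem \ref{LipsTh}, $n\mapsto Y^{n,m}$ is nondecreasing and $m\mapsto Y^{n,m}$ is nonincreasing, so the pointwise limits $Y^n:=\lim_m Y^{n,m}$ and $Y:=\lim_n Y^n$ exist, dominated by the uniform entropic bound; the intermediate $Y^n$ is the solution of the (non-Lipschitz) BSDE with driver $\bar f^n+\underline f$, obtained by the same stability argument as the one used for $Y$. The crucial step is to upgrade monotone pointwise convergence to ${\cal H}^1$-convergence: using that each increment $Y^{n,m}-Y^{n,m'}$ ($m<m'$) is a nonnegative semimartingale with zero terminal value whose running supremum is uniformly integrable, a Dini-type argument together with the uniform estimates of the previous step should give $\mathbb{E}[(Y^{n,m}-Y^n)^*]\to0$ and then $\mathbb{E}[(Y^n-Y)^*]\to0$. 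Lemma \ref{convergence}, i.e. the Barlow--Protter theorem \ref{theorem:BP90} fed by the uniform bounds, then shows that $Y^n$ and $Y$ are c\`adl\`ag processes in ${\cal S}_Q(|\eta_T|,\Lambda,C)$, that $V^{n,m}\to V$ uniformly in $L^1$ and $M^{n,m}\to M$ in ${\cal H}^1$, hence $Z^{n,m}\to Z$ in $\mathbb{H}^2$ and $U^{n,m}\to U$ for the jump components, with $(Z,U)\in\mathbb{H}^{2p}\times\mathbb{H}^{2p}_\lambda$ by Fatou applied to the uniform moment bounds.

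Finally, along a subsequence realizing a.s. uniform convergence of $Y^{n,m}$ and $ds\otimes d\mathbb{P}$-a.e. convergence of $(Z^{n,m},U^{n,m})$, continuity of $f$ gives $f^{n,m}(s,Y^{n,m}_s,Z^{n,m}_s,U^{n,m}_s)\to f(s,Y_s,Z_s,U_s)$, while the sandwich $\underline q\le f^{n,m}\le\bar q$ furnishes a dominating sequence whose terms $\tfrac12|Z^{n,m}|^2$, $j_s(\pm U^{n,m})$, $l_s$, $c_s|Y^{n,m}|$ are uniformly integrable by the ${\cal M}^2_0$ bounds; a generalized dominated convergence theorem then lets one pass to the limit in \eqref{QBSDEjumps}, so $(Y,Z,U)$ solves the $q_{\exp}(l,c,1)$ BSDE, and the required integrability $(e^{\pm U}-1)^2.\nu_T<+\infty$ follows from $\bar M,\underline M\in{\cal M}^2_0$. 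The main obstacle is the ${\cal H}^1$-convergence step of the third paragraph: the Barlow--Protter input genuinely requires uniform (${\cal H}^1$) convergence of the approximating semimartingales, and bridging the gap from monotone pointwise convergence --- complicated by the processes being only c\`adl\`ag rather than continuous, and by the need to carry out the two-parameter limit in a consistent order --- is the technical heart of the argument, precisely the difficulty (strong convergence of the martingale part) flagged in the introduction.
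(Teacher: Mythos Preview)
Your scheme is essentially the one the paper uses: double regularisation $f^{n,m}=\bar f^n+\underline f^m$, Lipschitz existence via Theorem \ref{LipsTh}, comparison with $(\bar q^n,|\eta_T|)$ and $(\underline q^m,-|\eta_T|)$ to place every $Y^{n,m}$ in ${\cal S}_Q(|\eta_T|,\Lambda,C)$, monotone convergence first in $n$ then in $m$, and Lemma \ref{convergence} to pass to the limit in the canonical decomposition.

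The one technical device you are missing, and which closes exactly the gap you flag, is a \emph{localisation by stopping times}. The paper sets
\[
T_K=\inf\Big\{t\ge 0:\ \mathbb{E}\Big[\exp\big(e^{C_T}|\eta_T|+\textstyle\int_0^T e^{C_s}d\Lambda_s\big)\,\Big|\,{\cal F}_t\Big]>K\Big\},
\]
so that on $[0,T_K]$ the entropic upper bound on $|Y^{n,m}|$ becomes the constant $\ln K$ and the approximating semimartingales are uniformly bounded; the monotone pointwise convergence is then upgraded to uniform convergence on $[0,T_K]$, which is the input the Barlow--Protter/Lemma \ref{convergence} argument actually needs. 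The limit is identified on each $[0,T_K]$ and one lets $K\to\infty$ at the end. Your ``Dini-type'' sentence is the right intuition, but without this localisation you cannot carry it out in the present c\`adl\`ag, unbounded setting.

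A smaller difference concerns the identification of the drift. Rather than a sandwich by $\underline q,\bar q$ and a generalised dominated convergence theorem, the paper works on $[0,T_K]$ (where $Y^{n,m}$ is bounded), splits the time integral according to $\{|Z^{n,m}_t|+|U^{n,m}_t|\le C\}$ and its complement, uses ordinary dominated convergence on the first set, and controls the second by Markov's inequality and the uniform bound $\mathbb E[\langle M^{n,m}\rangle_T]\le C_2$, letting $C\to\infty$ uniformly in $(n,m)$. Your route would need an $L^1$-convergent majorant for $\tfrac12|Z^{n,m}|^2+j(\pm U^{n,m})$, which the ${\cal M}^p_0$ bounds do not directly provide; the paper's truncation-in-$(Z,U)$ avoids this.
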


\begin{proof} We follow two steps to prove the existence. Firstly, we construct a sequence of quadratic exponential semimartingales which converges and secondly we find the convergence of the finite variation and martingale part using stability result. 
\\\\\noindent \textit{First step: (Construction of the sequence of ${\cal S}_Q(|\eta_T|,\Lambda,C)$ semimartingales)}.
Let consider a $q_{\exp}(l,c,1)$ BSDE associated to $(f,\eta_T)$ and consider the sequence of coefficients $f^{n,m}=\bar f^n+\underline{f}^m$ which converges to $f$ when $n,m$ go to infinity from Proposition \ref{Seq}. We consider the BSDE associated to $(f^{n,m},\eta_T)$:
$$-dY^{n,m}_t=f^{n;m}(t,Y^{n,m}_t,Z^{n,m}_t,U^{n,m}_t)dt-Z^{n,m}_tdW_t-\int_E U^{n,m}(t,x).\widetilde \mu(dt,dx),\quad Y^{n,m}_T=\eta_T.$$
\noindent Since for each $n,m \in \N$ the coefficient $f^{n,m}$ satisfies the continuity, the integrability and the Lipschitz conditions of Theorem \ref{LipsTh},  there exists a solution $(Y^{n,m},Z^{n,m},U^{n,m})$  of the BSDE associated to $(f^{n,m},\eta_T)$. From Proposition \ref{Seq}; $\underline{q}\le f^{n,m}\le \bar q$, hence ${(Y^{n,m})}_{n,m\in \N}$ is a sequence of ${\cal Q}(\Lambda,C)$ semimartingales. Moreover since $\underline{q}^m\le f^{n,m}\le \bar q^n$ and the triples $(\bar Y^n,\bar Z^n,\bar U^n)$ and $(\underline{Y}^m,\underline{Z}^m,\underline{U}^m)$ solutions of the $q_{exp}(l,c,1)$ BSDE associated to $(\bar q^n,|\eta_T|)$ and $(\underline{q}^m,-|\eta_T|)$ exist and satisfy for all stopping times $\sigma\le T$:
$$|\bar Y^n_\sigma|\vee |\underline{Y}^m_\sigma|\le \bar\rho_\sigma\left[e^{C_{\sigma,T}}|\eta_T|+\int_\sigma^T e^{C_{\sigma,s}}d\Lambda_s \right],\quad a.s$$
\noindent  The existence of the triples is given by existsence result of Theorem \ref{LipsTh}, since $\bar q^n$ and $\underline{q}^m$ satisfy the continuity, the integrability and the Lipschitz conditions  for each $n,m\in \N$. Moreover since they also satisfy  the $({\cal A}_\gamma)$-condition, we find by comparison result of Theorem \ref{LipsTh}  that  $\underline{Y}^m\le Y^{n,m}\le \bar Y^n$ and  we conclude  ${(Y^{n,m})}_{n,m\in \N}$ is a  sequence of ${\cal S}_{Q}(|\eta_T|,\Lambda,C)$ semimartingales since for each $n,m\in \N$, $Y^{n,m}$ is a ${\cal Q}(\Lambda,C)$ semimartingale satisfying: 
\begin{equation}\label{bound}
|Y^{n,m}_\sigma|\le \bar\rho_\sigma\left[e^{C_{\sigma,T}}|\eta_T|+\int_\sigma^T e^{C_{\sigma,s}}d\Lambda_s\right],\quad a.s
\end{equation}
\noindent for $n, m\ge c*=\sup_{t\le T} c_t$, see more details about the characteristics of the coefficient $\bar q^n$ and $\underline{q}^m$ in the Appendix Lemma 
\ref{ineqsaut}. Let now prove the coefficient satisfies the $({\cal A}_\gamma)$- condition. For $(y,z)\in \R\times \R^d$:
$$f^{n,m}(t,y,z,u)-f^{n,m}(t,y,z,\bar u)=[\bar f^n(t,y,z,u)-\bar f^n(t,y,z,\bar u)]+[\underline{f}^m(t,y,z,u)-\underline{f}^m(t,y,z,\bar u)].$$
\noindent Therefore since for any functions $\psi$ and $\bar \psi$:
\begin{equation}\label{InfSupinequality}
\begin{split}
&\inf_x \psi(x)-\inf_x \bar \psi(x)\le \sup_x \{\psi(x)-\bar \psi(x)\}, \quad 
\sup_x \psi(x)-\sup_x \bar \psi (x)\le \sup_x \{\psi(x)-\bar \psi(x)\} 
\end{split}
\end{equation}
\noindent Since the coefficient $f^{n,m}$ satisfies the $({\cal A}_\gamma)$ condition, we can apply comparison result see Theoem \ref{LipsTh}; we deduce for each $n,m\in \N$: $$Y^{n+1,m}\ge Y^{n,m}\ge Y^{n,m+1}.$$
\\\noindent\textit{ Second step: (Convergence of the semimartingale, the finite variation and the martingale part).}
For each $m\in \N$, ${(Y^{n,m})}_{n\ge 0}$ is an increasing sequence of bounded c\`adl\`ag ${\cal S}_Q(|\eta_T|,\Lambda,C)$ semimartingales, with canonical decomposition $Y^{n,m}=Y^{n,m}_0-V^{n,m}-M^{n,m}$. Hence, this sequence converges, let denote $Y^m$ its limit for each $m\in \N$. From stability result, Lemma \ref{convergence},
${(Y^m)}_{m}$ is a sequence of  c\`adl\`ag ${\cal S}_Q(|\eta_T|,\Lambda,C)$ semimartingales with canonical decomposition $Y^m=Y^m_0-V^m+M^m$ where 
 \begin{equation*}
\label{limite:variation}
\lim_{ n \to \infty} \mathbb{E} \, \big[ \big(V^{n,m} - V^m\big)^* \big] = 0 \quad \mbox{and} \quad  \lim_{ n \to \infty} \|M^{n,m} - M^m\|_{\mathcal{H}^1} = 0.
\end{equation*}
\noindent For each $n,m\ge c^*$, since $Y^{n,m}\ge Y^{n,m+1}$ then ${(Y^m)}_{m}$ is a decreasing sequence of bounded c\`adl\`ag ${\cal S}_Q(|\eta_T|,\Lambda,C)$ semimartingales. Let $Y$ its limit, from stability result Lemma \ref{convergence},
$Y$ is a  c\`adl\`ag ${\cal S}_Q(|\eta_T|,\Lambda,C)$ semimartingale with canonical decomposition $Y=Y_0-V+M$ where 
 \begin{equation*}
\label{limite:variation}
\lim_{ m \to \infty} \mathbb{E} \, \big[ \big(V^{m} - V\big)^* \big] = 0 \quad \mbox{and} \quad  \lim_{ m \to \infty} \|M^{m} - M\|_{\mathcal{H}^1} = 0.
\end{equation*}
 \noindent Let recall $dV^{n,m}_t=f^{n,m}(Y^{n,m}_t,Z^{n,m}_t,U^{n,m}_t)dt$ and consider the sequence of stopping times ${(T_K)}_{K\ge 0}$ defined by:
$$T_K=\inf \big\{ t \ge 0, \mathbb{E}\left[\exp( e^{C_{T}}|\eta_T|+\int_0^T e^{C_{s}}d\Lambda_s)\vert {\cal F}_t\right]>K\big\}$$
\noindent The sequence ${(T_K)}_{K\ge 0}$ converges to infinity when $K$ goes to infinity, moreover for $K>K_\epsilon$ large enough, $\mathbb{P}(T_K<T)\le{\epsilon\over K}$. From \ref{bound}, we find $Y^{n,m}_{.\wedge T_K}$ lives in a compact set and its convergence to the c\`adl\`ag process $Y$ is uniform. The same property holds for $M^{n,m}_{.\wedge T_K}$ and $V^{n,m}_{.\wedge T_K}$.  Let $Z^{n,m,K}_t=Z^{n,m}1_{t<T_K}$ and $U^{n,m,K}_t=U^{n,m}1_{t<T_K}$
in such that $(Z^{n,m}.W)_{.\wedge T_K}=Z^{n,m,K}.W$ and $(U^{n,m}.\widetilde\mu)_{.\wedge T_K}=U^{n,m,K}.\widetilde \mu$. Since the sequence $M^{n,m}_{.\wedge  T_K}=(Z^{n,m}.W)_{.\wedge T_K}+(U^{n,m}.\widetilde \mu)_{.\wedge T_K}$ strongly converges, the sequence of orthogonal martingales $(Z^{n,m,K}.W)$ and $(U^{n,m,K}.\widetilde \mu)$ also converge in their appropriate space. Therefore, we can extract a subsequence $Z^{n,m,K}$ and $U^{n,m,K}$ converging a.s to some  processes $Z$ and $U$. 
\par\medskip  For $t\le T_K$, the sequence $f^{n,m}(t,Y^{n,m}_t,Z^{n,m,K}_t,U^{n,m,K}_t)$ converges to $f(t,Y_t,Z_t,U_t) dt\otimes d\mathbb{P}$ a.s . it remains to prove that $\mathbb{E}\left[\int_0^{T_K} |f^{n,m}(t,Y^{n,m}_t,Z^{n,m}_t,U^{n,m}_t)-f(t,Y_t,Z_t,U_t)|dt \right]$ goes to zero when $n,m$ go to infinity. Firstly  we have  $$\mathbb{E}\left[\int_0^{T_K} |f^{n,m}(t,Y^{n,m}_t,Z^{n,m}_t,U^{n,m}_t)-f(t,Y_t,Z_t,U_t)|1_{\{[Z^{n,m}_t|+|U^{n,m}_t|\le C\}}dt \right]$$ goes to zero when $n,m$ go to infinity, by dominated convergence since $Y^{n,m}$ is bounded and $|f^{n,m}(t,Y^{n,m}_t,Z^{n,m}_t,U^{n,m}_t)-f(t,Y_t,Z_t,U_t)|$ is uniformly bounded in $L^1$ by Lemma \ref{convergence}. Moreover for $s\le T_K$,  $\mathbb{P}\left(|Z^{n,m}_s|+|U^{n,m}_s|>C\right)\le {2\over C^2}\mathbb{E}({|Z^{n,m}_s|}^2+{|U^{n,m}_s|}^2)$, from  Lemma \ref{convergence}, there exists a constants $C_2$ such that $\mathbb{E}(\langle M^{n,m}\rangle_s)\le C_2$, therefore $$\mathbb{E}\left[\int_0^{T_K} |f^{n,m}(t,Y^{n,m}_t,Z^{n,m}_t,U^{n,m}_t)-f(t,Y_t,Z_t,U_t)|1_{\{[Z^{n,m}_t|+|U^{n,m}_t|> C\}}dt \right]$$ goes to zero when $C$ goes to infinity, uniformly in $n,m$. As a consequence, the process $V$ in the decomposition of the quadratic exponential semimartingale $Y$ is given by $dV_t=f(t,Y_t,z_t,U_t)dt$ on $[0,T_K]$ for any $K$. We conclude the triple $(Y,Z,U)$ is a solution of the $q_{\exp}(l,c,1)$ BSDE associated to $(f,\eta_T)$. Moreover since $Y$ belongs to the space ${\cal S}_Q(\eta_T,\Lambda,C)$, then from Proposition \ref{aprioriestimates} the martingales $Z.W+(e^U-1).\widetilde \mu$ and $-Z.W+(e^{-U}-1).\widetilde \mu$ belongs to the space ${\cal M}^p_0$.  
\end{proof}

\newpage
\section{Appendix}
\begin{Lemma}\label{ineqsaut} For any $k\ge 1$ and any local martingale $M$:  $$j_t(k\Delta M_t)\ge k j_t(\Delta M_t),\quad 0\le t\le T$$
\end{Lemma}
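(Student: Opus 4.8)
The plan is to reduce the inequality to an elementary convexity estimate for the function $g(v)=e^{v}-v-1$ on $\R$, and then transport it to the predictable compensators through the explicit representation recalled in the Remark on the dual predictable compensator.

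\textbf{Step 1 (a pointwise inequality).} First I would show that for every $v\in\R$ and every $k\ge 1$,
\[
e^{kv}-kv-1\ \ge\ k\bigl(e^{v}-v-1\bigr).
\]
Since $g(v)=e^{v}-v-1$ is convex with $g(0)=0$, and since $k\ge 1$ lets us write $v=\tfrac1k(kv)+\bigl(1-\tfrac1k\bigr)\cdot 0$ as a convex combination, convexity gives $g(v)\le \tfrac1k g(kv)+\bigl(1-\tfrac1k\bigr)g(0)=\tfrac1k g(kv)$, which is exactly the claim. Equivalently, $\phi_k(v):=e^{kv}-ke^{v}+(k-1)$ satisfies $\phi_k(0)=0$ and $\phi_k'(v)=k(e^{kv}-e^{v})$, which has the sign of $v$ when $k\ge 1$, so $\phi_k$ attains its minimum $0$ at $v=0$; hence $\phi_k\ge 0$.

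\textbf{Step 2 (transfer to compensators).} Because $M^{c}$ is continuous we have $\Delta M=\Delta M^{d}$, and writing $M^{d}=U.\widetilde\mu$ the Remark gives $j_t(k\Delta M_t)=(e^{kU}-kU-1).\nu_t$ and $j_t(\Delta M_t)=(e^{U}-U-1).\nu_t$. Subtracting,
\[
j_t(k\Delta M_t)-k\,j_t(\Delta M_t)=\bigl(e^{kU}-kU-1-k(e^{U}-U-1)\bigr).\nu_t=\bigl(\phi_k(U)\bigr).\nu_t .
\]
By Step 1 the integrand $\phi_k(U)$ is nonnegative $\P\otimes\nu$-a.e., and $\nu$ is a nonnegative random measure, so the right-hand side is a nondecreasing process null at $0$; hence $j_t(k\Delta M_t)\ge k\,j_t(\Delta M_t)$ for all $t\in[0,T]$, a.s. If either side is infinite the inequality is trivial in $[0,+\infty]$, since $\phi_k(v)\ge e^{v}-v-1\ge 0$ for $k\ge 1$ shows the integrand attached to $k\Delta M$ dominates that attached to $\Delta M$.

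\textbf{Remark on the argument.} One may also avoid the explicit $\nu$-representation: the raw process $A^{k}_t-kA^{1}_t=\sum_{s\le t}\phi_k(\Delta M_s)$ is increasing by Step 1, and taking dual predictable projections — a linear, order-preserving operation — yields that $j(k\Delta M)-k\,j(\Delta M)$ is itself increasing, hence nonnegative. No genuine obstacle is expected here; the only mild point is to make sure $j(k\Delta M)$ and $j(\Delta M)$ are well defined and that compensation commutes with the linear combination, which is granted by the integrability noted in the Remark (or, if one prefers not to assume it, by reading the statement in $[0,+\infty]$ as above).
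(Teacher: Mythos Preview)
Your proof is correct and follows essentially the same route as the paper: both reduce to the pointwise inequality $e^{kv}-kv-1\ge k(e^{v}-v-1)$ via the representation $j_t(\Delta M^d)=(e^{U}-U-1).\nu_t$, and establish that inequality by observing that the difference $\phi_k(v)=e^{kv}-ke^{v}+(k-1)$ has derivative $k e^{v}(e^{(k-1)v}-1)$ with the sign of $v$, hence a global minimum $0$ at $v=0$. Your additional convexity argument and the remark on compensating the raw increasing process directly are nice alternative justifications, but the core argument is identical to the paper's.
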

\begin{proof} Let recall that for any local martingale $M=M^c+M^d$ from representation theorem, there exists $U\in{\cal G}_{loc}(\mu)$ such that $M^d=U.\widetilde \mu$ , then $j(\Delta M^d)=(e^{U}-U-1).\nu$. Therefore from representation theorem it is sufficient to prove the following function $f_k$: $x\longrightarrow (e^{kx}-kx-1)-k(e^x-x-1)$ is positive to find the result. For any $x\in \R$, since $f_k'(x)=ke^x(e^{(k-1)x}-1)$, then we conclude the function $f_k$ is increasing on $(0,+\infty)$ and decreasing on $(-\infty,0)$. Therefore, for any $x\in \R$, $f_k(x)\ge f_k(0)=0.$ 
\end{proof}



\begin{Lemma}\label{ineqsaut} Let us define the following quadractic exponential coefficients by $\bar q(y,z,u)=c|y|+|l|+{1\over 2}|z|^2+j(u)$ and $\underbar q(y,z,u)=-c|y|-|l|-{1\over 2}|z|^2-j(-u)$ and define the sequence $\bar q^n$ and $\underbar q^m$ by the inf-convolution and sup-convolution for $n,m\ge c^*=\sup_{t\in[0,T]} c_t$:

$$\bar q^n(y,z,u)=\inf_{r,w,v}\{\bar q(y,w,v)+n|y-r|+n|z-w|+n|u-v|\}$$
\noindent  
$$\underbar q^m(y,z,u)=\sup_{r,w,v}\{\underbar q(y,w,v)-m|y-r|-m|z-w|-m|u-v|\}$$
\noindent then:\\\\
\noindent i) The sequences $\bar q^n$ and $\underbar q^m$ satisfy the structure condition $\mathcal{Q}_{\exp}(\Lambda,C) $.\\\\ \noindent
\noindent ii) There exists a unique solution $(\bar Y^n, \bar Z^n,\bar U^n)$  (resp. ($(\underbar Y^m, \underbar Z^m,\underbar U^m)$) of the BSDE's associated to $(\bar q_n,|\xi_T|)$ (resp. to ($\underbar q^m,-|\xi_T|$).\\\\
\noindent iii) The processes $\bar Y^n$ and $\underbar Y^m$ are values processes of the following robust optimization problem,, for any $\sigma\le T$:

\begin{equation*}
\begin{split}
&\bar Y^n_\sigma=\sup_{\{\mathbb{Q}\ll\P, |\beta|\le n; -1\le \kappa\le n\}}\mathbb{E}^\mathbb{Q}_\sigma\left[S^c_{\sigma,T}|\xi_T|+\int_\sigma^T S^c_{\sigma,t} |l_t|dt+\int_\sigma^T c_t S^c_{\sigma,t}\ln\left({Z^\Q_t\over Z^\Q_\sigma}\right)dt+S^c_{\sigma,T}\ln\left({Z^\Q_T\over Z^\Q_\sigma}\right)\right]\\
&\underbar Y^m_\sigma=\inf_{\{\Q\ll\P, |\beta|\le m; -1\le \kappa\le m\}}\mathbb{E}^\Q_\sigma\left[-S^c_{\sigma,T}|\xi_T|-\int_\sigma^T S^c_{\sigma,t} |l_t|dt-\int_\sigma^T c_t S^c_{\sigma,t}\ln\left({Z^\Q_t\over Z^\Q_\sigma}\right)dt+S^c_{\sigma,T}\ln\left({Z^\Q_T\over Z^\Q_\sigma}\right)\right]
\end{split}
\end{equation*}
\noindent where  $S^c_{\sigma,t}=\exp(\int_\sigma^t c_s ds)$ and the Radon Nikodym density of $\Q$ with respect to $\P$ on ${\mathcal G}_T$ is $Z^{\Q}_T$, the process  $Z^\Q_t:=\mathbb{E}\left[Z^{\Q}_T\vert {\mathcal G}_t\right]={\cal E}(\beta.W+\kappa.\widetilde \mu)_t$.\\\\\noindent
Moreover, we have the following estimates:  $$ \underbar Y^m_\sigma\le Y^{n,m}_\sigma\le \bar Y^n_\sigma,\quad \sigma\le T.$$
\noindent and

$$|\underbar{Y}^m_\sigma|\vee |\bar Y^n_\sigma|\le \rho_\sigma\left[S^c_{\sigma,T}|\xi_T|+\int_\sigma^T S^c_{\sigma,s}|l_s|ds\right],\quad \sigma\le T.$$
\end{Lemma}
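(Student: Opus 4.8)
The plan is to treat the three assertions of the lemma in turn and then read off the two estimates from the robust representation in~(iii).

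\emph{Assertions (i) and (ii).} Since $e^s-s-1\ge 0$, every term of $\bar q(y,z,u)=c|y|+|l|+\tfrac12|z|^2+j(u)$ is nonnegative, so $\bar q\ge 0$; inf-convolution being order preserving and only decreasing, $0\le\bar q^n\le\bar q$, and symmetrically $\underline q\le\underline q^m\le 0$. This is precisely the structure condition $\mathcal{Q}_{\exp}(\Lambda,C)$ with $\Lambda_t=\int_0^t l_s\,ds$, $C_t=\int_0^t c_s\,ds$, which gives~(i); the hypothesis $n,m\ge c^{*}$ is used only to keep the $c_t|y|$-term untouched by the convolution (inf-convolving the $c_t$-Lipschitz map $y\mapsto c_t|y|$ with $n|\cdot|$, $n\ge c_t$, returns it unchanged), so no growth in $y$ is created. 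For~(ii) the inf-/sup-convolution calculus makes $\bar q^n$ (resp. $\underline q^m$) globally $n$- (resp. $m$-) Lipschitz and continuous in $(y,z,u)$, with $|\bar q^n(t,0,0,0)|\le l_t\in L^2(dt\otimes d\mathbb{P})$ by Assumption~\ref{Integcond}, and, exactly as in \eqref{InfSupinequality}, the $\mathcal{A}_\gamma$-condition passes to the convolutions with a kernel bounded by the Lipschitz constant and $>-1$ (see the dual formula below), i.e. $-1<\gamma\le n$. Theorem~\ref{LipsTh} then yields the unique triples $(\bar Y^n,\bar Z^n,\bar U^n)$ and $(\underline Y^m,\underline Z^m,\underline U^m)$, and since $\underline q^m\le f^{n,m}\le\bar q^n$ (Proposition~\ref{Seq}) together with $-|\xi_T|\le\eta_T\le|\xi_T|$, its comparison part gives $\underline Y^m\le Y^{n,m}\le\bar Y^n$.

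\emph{Assertion (iii).} The core is Fenchel duality. The convex conjugate in $(z,u)$ of the canonical generator $\tfrac12|z|^2+j(u)$ is $\tfrac12|\beta|^2+e_t(\kappa)$, where $e_t(\kappa):=\int_E\bigl((1+\kappa)\ln(1+\kappa)-\kappa\bigr)\xi_t(x)\lambda(dx)$ is the relative-entropy density (finite for $\kappa>-1$); since the conjugate of an inf-convolution is the sum of conjugates, the conjugate of the $n$-Lipschitz penalty restricts the dual variables to the bounded set $\{|\beta|\le n,\ -1\le\kappa\le n\}$, while the $c_t|y|$-term is left intact ($c_t\le c^{*}\le n$), so
\[
\bar q^n(t,y,z,u)=c_t|y|+|l_t|+\sup_{|\beta|\le n,\;-1\le\kappa\le n}\Bigl\{\beta z+\int_E\kappa u\,\xi_t\lambda-\tfrac12|\beta|^2-e_t(\kappa)\Bigr\}.
\]
For admissible $(\beta,\kappa)$ let $\mathbb{Q}=\mathbb{Q}^{\beta,\kappa}$ have density $Z^{\mathbb{Q}}=\mathcal{E}(\beta.W+\kappa.\widetilde\mu)$ (a true martingale because the kernel is bounded and $\kappa>-1$). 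Since $\bar q^n\ge0$ and $|\xi_T|\ge0$, comparison with the null solution gives $\bar Y^n\ge0$, so $c_t|\bar Y^n_t|$ is linear; rewriting the BSDE under $\mathbb{Q}$ by Girsanov and discounting by $S^c_{\sigma,\cdot}$ turns it into a linear equation, and taking $\mathbb{E}^{\mathbb{Q}}_\sigma$ (licit by the a priori estimates of Proposition~\ref{aprioriestimates} in the present bounded setting) yields, with equality along the optimal feedback control (obtained by a measurable selection in the compact control set),
\[
\bar Y^n_\sigma=\sup_{(\beta,\kappa)}\mathbb{E}^{\mathbb{Q}}_\sigma\Bigl[S^c_{\sigma,T}|\xi_T|+\int_\sigma^{T}S^c_{\sigma,t}|l_t|\,dt-\int_\sigma^{T}S^c_{\sigma,t}\bigl(\tfrac12|\beta_t|^2+e_t(\kappa_t)\bigr)dt\Bigr].
\]
Recognising $\tfrac12|\beta_t|^2+e_t(\kappa_t)$ as the ($\mathbb{Q}$-)drift of $\ln(Z^{\mathbb{Q}}_t/Z^{\mathbb{Q}}_\sigma)$ and integrating by parts against the discount factor $S^c_{\sigma,\cdot}$ recasts the last integral as $\mathbb{E}^{\mathbb{Q}}_\sigma\bigl[S^c_{\sigma,T}\ln(Z^{\mathbb{Q}}_T/Z^{\mathbb{Q}}_\sigma)-\int_\sigma^{T}c_tS^c_{\sigma,t}\ln(Z^{\mathbb{Q}}_t/Z^{\mathbb{Q}}_\sigma)\,dt\bigr]$, which is the stated expression (I expect the sign in front of $S^c_{\sigma,T}\ln(Z^{\mathbb{Q}}_T/Z^{\mathbb{Q}}_\sigma)$ to be $-$ rather than $+$); the formula for $\underline Y^m_\sigma$ follows by the change of variables $X\mapsto-X$, which sends $\underline q^m$ to $\tilde f^m(t,y,z,u)=-\underline q^m(t,-y,-z,-u)$, again $\le\bar q$.

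\emph{The estimates, and the main obstacle.} Because $S^c_{\sigma,t}\ge1$ and $\tfrac12|\beta_t|^2+e_t(\kappa_t)\ge0$, the last integral in the formula for $\bar Y^n_\sigma$ dominates the relative entropy $H_\sigma(\mathbb{Q}\,|\,\mathbb{P})=\mathbb{E}^{\mathbb{Q}}_\sigma[\ln(Z^{\mathbb{Q}}_T/Z^{\mathbb{Q}}_\sigma)]$, so the Gibbs variational inequality for the entropic risk measure gives
\[
\bar Y^n_\sigma\le\sup_{\mathbb{Q}}\Bigl\{\mathbb{E}^{\mathbb{Q}}_\sigma\bigl[S^c_{\sigma,T}|\xi_T|+\int_\sigma^{T}S^c_{\sigma,t}|l_t|\,dt\bigr]-H_\sigma(\mathbb{Q}\,|\,\mathbb{P})\Bigr\}=\rho_\sigma\Bigl[S^c_{\sigma,T}|\xi_T|+\int_\sigma^{T}S^c_{\sigma,s}|l_s|\,ds\Bigr],
\]
and $\bar Y^n\ge0$ turns this into the bound on $|\bar Y^n_\sigma|$; the bound on $|\underline Y^m_\sigma|$ is the mirror statement for $-\underline Y^m\ge0$, hence the claimed $|\underline Y^m_\sigma|\vee|\bar Y^n_\sigma|\le\rho_\sigma[\,\cdot\,]$. (Alternatively, from $\bar q^n\le\bar q$ the discounted process $S^c_{\sigma,\cdot}\bar Y^n+\int_\sigma^{\cdot}S^c_{\sigma,s}|l_s|\,ds$ is a sub-solution of the canonical $q_1$-BSDE — using $e^{-\int c}\le1$ on the quadratic part and the Appendix inequality $j_t(k\cdot)\ge k\,j_t(\cdot)$, $k\ge1$, on the jump part — hence its exponential is a submartingale by the argument of Theorem~\ref{expsubmartingale} and is dominated by the entropic process, the $q_1$-BSDE solution with the same terminal value.) I expect the real work to sit in~(iii): the measurable selection of the optimal control, and the integrability/true-martingale justifications needed to pass to $\mathbb{Q}$-conditional expectations and to identify the finite-variation part after the linearisation, all of which rely on the bounded ($n$-Lipschitz) setting and on Proposition~\ref{aprioriestimates}; the bookkeeping converting the entropy integral into the $\ln(Z^{\mathbb{Q}}/Z^{\mathbb{Q}}_\sigma)$ form is then routine modulo the sign remark above.
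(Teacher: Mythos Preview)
Your argument is correct and follows a genuinely different route from the paper's, especially in parts (ii) and (iii).

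For the $\mathcal A_\gamma$-condition in (ii), the paper does not invoke duality at all: it uses the explicit formula for $\bar q^n$ (obtained by solving the one-dimensional inf-convolutions) to partition $E$ into four sets $A_1,\dots,A_4$ according to whether $e^{u}-1$ and $e^{\bar u}-1$ exceed $n$, and bounds the increment on each piece via convexity of $g$, arriving at an explicit kernel $\gamma^n=n\mathbf 1_{\{e^u-1>n\}}+(e^{\bar u}-1)\mathbf 1_{A_1}\in(-1,n]$. Your route---writing $\bar q^n$ as a sup over $-1\le\kappa\le n$ by Fenchel duality and reading off $\gamma=\kappa^*=(e^u-1)\wedge n$---is shorter and more conceptual; it also makes the link between the $\mathcal A_\gamma$-bound and the dual constraint transparent. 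For (iii) the contrast is sharper: the paper proceeds \emph{control $\to$ BSDE}, defining the value processes $\bar V^n,\underline V^m$ of the stated robust problems, citing Bordigoni--Matoussi--Schweizer for their existence as special semimartingales, identifying their drivers $\bar h^n,\underline h^m$ through the submartingale/martingale optimality principle and first-order conditions, and then matching $\bar h^n=\bar q^n$ (using $\bar V^n\ge 0$) so that uniqueness forces $\bar V^n=\bar Y^n$. You go \emph{BSDE $\to$ control}: dualise the driver, change measure by Girsanov, discount by $S^c$, and take conditional expectations, obtaining the robust formula directly with equality at the feedback maximiser. Your approach avoids the external control reference; the paper's buys an independent construction of the value process. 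For the final entropic bound the paper simply cites Proposition~4.2 of Barrieu--El~Karoui, whose content is exactly your Gibbs/variational step together with $S^c\ge 1$. Your sign observation on the terminal entropy term is correct: the penalty enters with a minus sign after the integration by parts you describe.
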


\noindent \textbf{Proof:} i) We compute explicitly the sequence of functions $\bar q^n$ and $\underbar q^m$ which satisfy  the  structure condition  $\mathcal{Q}_{\exp}(\Lambda,C) $ for every $n,m\ge c^*$. By  definition we have
\begin{equation*}
\begin{split}
\bar q^n_t(y,z,u)&=\inf_{r,w,v}\{\bar q_t(y,w,v)+n|y-r|+n|z-w|+n|u-v|\}\\&=c|y|+|l|+\inf_w \{{1\over 2}|w|^2+n|z-w|\}+\inf_v\{j_t(v)+n|u-v|\}
\end{split}
\end{equation*}
Obviously one can find the  explicit   form of $ \bar q^n$ which is given by

\begin{equation*}
\begin{split}
&\bar q^n(y,z,u)=c|y|+|l|+{1\over 2}|z|^2\textbf{1}_{\{|z|\le n\}}+n(|z|-{n\over 2})\textbf{1}_{\{|z|> n\}}\\&+\int_E\left[g(u(e))\textbf{1}_{\{e^{u(e)}-1\le n\}}+\big(-(n+1)\ln(n+1)+n(u(e)+1)\big)\textbf{1}_{\{e^{u(e)}-1>n\}}\right]\zeta_t(e)\rho(de)
\end{split}
\end{equation*}
where we recall $g(x)=e^x-x-1$,  using the similar arguments we find the explicitely form of $\underbar q_m$:
\begin{equation*}
\begin{split}
&\underbar q^m(y,z,u)=-c|y|-|l|-{1\over 2}|z|^2\textbf{1}_{\{|z|\le m\}}-m(|z|-{m\over 2})\textbf{1}_{\{|z|> m\}}\\&+\int_E\left[g(-u(e))\textbf{1}_{\{e^{-u(e)}-1\le m\}}+\big((m+1)\ln(m+1)+m(u(e)-1)\big)\textbf{1}_{\{e^{-u(e)}-1>m\}}\right]\zeta_t(e)\rho(de)\\
\end{split}
\end{equation*}
\noindent then we conclude for each $n,m\ge c^*$, $\bar q^n$ and $\underbar q^m$ satisfy the  structure condition $ \mathcal{Q}_{exp} (\Lambda,C)$.
\\\\\noindent ii) Since the coefficients $\bar q^n$ and $\underbar q^m$ are Lipschitz we deduce there exists a solution $(\bar Y^n,\bar Z^n,\bar U^n)$ resp $(\underbar Y^m,\underbar Z^m,\underbar{U}^m)$ associated to $(\bar q^n,|\xi_T|)$ resp $(\underbar q^m,-|\xi_T|)$. Let now prove these coefficients satisfy the (${\cal A}_{\gamma}$) condition. To prove this result let first remark that for all $x\in \R$:

$$-(n+1)\ln(n+1)+n(x+1)=g[\ln(n+1)]+n(x-\ln(n+1))$$
\noindent Let $u,\bar u$, we set $E=\bigcup_{i=1}^4 A_i$ where 
\begin{equation*}
\begin{split}
& A_1=\{ x \in \R, e^{u(x)}-1\le n, e^{\bar u(x)}-1\le n\},\quad  A_2=\{ x \in \R, e^{u(x)}-1\le n, e^{\bar u(x)}-1>n\}
\\
& A_3=\{ x \in \R, e^{u(x)}-1>n, e^{\bar u(x)}-1\le n\},\quad  A_4=\{ x \in \R, e^{u(x)}-1> n, e^{\bar u(x)}-1>n\}
\end{split}
\end{equation*}

\noindent Therefore we find for all $y,z$:
\begin{equation*}
\begin{split}
&\bar q^n(y,z,u)-\bar q^n(y,z,\bar u)\\&=\int_{A_1} [g(u(x))-g(\bar u(x))]\zeta_t(x)\rho(dx)+\int_{A_2} [g(u(x))-g(\ln(n+1))-n(\bar u(x)-\ln(n+1))]\zeta_t(x)\rho(dx)\\&+
\int_{A_3} [n(u(x)-\ln(n+1))+g(\ln(n+1))-g(\bar u(x))]\zeta_t(x)\rho(dx)+\int_{A_4} n(u(x)-\bar u(x))\zeta_t(x)\rho(dx)
\end{split}
\end{equation*}
We now find differents inequalities on every given subset of $E$:
\\\\\noindent \textit{On the set $A_1$}: we use the convex property of the coefffient $g$ and we deduce:
\begin{equation}\label{A1}
\int_{A_1} [g(u(x))-g(\bar u(x))]\zeta_t(x)\rho(dx)\le \int_{A_1} (e^{\bar u(x)}-1)(u(x)-\bar u(x))\zeta_t(x)\rho(dx)
\end{equation}
\textit{On the set $A_2$}:
Since on $A_2$, the function $g$ is increasing we find $\forall x\in A_2$, $g(u(x))\le g(\ln(n+1))$, moreover we have $\bar u(x)-\ln(n+1)\ge 0$ for $x\in A_2$ then we conclude:
\begin{equation}\label{A2}
\int_{A_2} [g(u(x))-g(\ln(n+1))-n(\bar u(x)-\ln(n+1))]\zeta_t(x)\rho(dx)\le 0.
\end{equation}
\textit{On the set $A_3$}: we use the convex property of the coefficient $g$ and we find $g(\ln(n+1))-g(\bar u(x))\le -n(\bar u(x)-\ln(n+1)),\quad \forall x \in A_3$.
Therefore, we find:
\begin{equation}\label{A3}
\int_{A_3} [n(u(x)-\ln(n+1)+g(\ln(n+1))-g(\bar u(x))]\zeta_t(x)\rho(dx)\le \int_{A_3} n(u(x)-\bar u(x))\zeta_t(x)\rho(dx)
\end{equation}
Hence using \eqref{A1}, \eqref{A2} and \eqref{A3}, we find:
\begin{equation*}
\begin{split}
&\bar q^n(y,z,u)-\bar q^n(y,z,\bar u)=\int_E \gamma^n(u(x),\bar u(x))(u(x)-\bar u(x))\zeta_t(x)\rho(dx)
\end{split}
\end{equation*}
where $\gamma^n(u(x),\bar u(x))=n \textbf{1}_\{e^{u(x)}-1>n\}+(e^{\bar u(x)}-1)1_{A_1}$, then $-1\le \gamma^n\le n$ hence $\gamma^n \in {\mathcal U}^{\exp}$. Therefore the sequence $\bar q^n$ satisfies the ${\mathcal A}_\gamma$ condition. We use similar arguments to prove the sequence $\underbar q^m$ satisfies also the  ${\mathcal A}_\gamma$ condition. We conclude from Comparison Theorem, the uniqueness of the triple $(\bar Y^n,\bar Z^n, \bar U^n)$ resp( $(\underbar{Y}^m,\underbar{Z}^m,\underbar{U}^m)$ solution of the BSDE associated to $(\bar q^n,|\xi_T|)$ resp($(\underbar{q}^m,-|\xi_T|)$).
\\\\\noindent iii) Let define the cost functional of the robust optimization problems defined in Lemma \ref{ineqsaut}-iii), $\bar J^{\Q,n}$ and $\underbar{J}^{\Q,m}$, and define the value processes $\bar V^n$, $\underbar V^m$. Assume $|\xi_T|\in \mathcal{L}^{\exp}$, $|l|\in {\cal D}^1_{\exp}$ and $c$ bounded then the value processes of the robust optimization exist see Bordigoni, Matoussi and Schweizer \cite{BMS07} for more details. Moreover we deduce for any $\Q\ll\P$:
\begin{equation}\label{relationJ}
\begin{split}
&\bar J^{\Q,n}_t=S^c_t \bar V^n_t+\int_0^t S^c_s|l_s|ds+\int_0^t c_s S^c_s\ln(Z^\Q_s)ds-S^c_t\ln(Z^\Q_t)\\&
\underbar{J}^{\Q,m}_t=S^c_t \underbar{V}^m_t-\int_0^t S^c_s|l_s|ds-\int_0^t c_s S^c_s\ln(Z^\Q_s)ds+S^c_t\ln(Z^\Q_t)
\end{split}
\end{equation}
\noindent moereover the value processes $\bar V^n$ and $\underbar V^m$ are special semimartingales the following the representation theorem there exist a predictable process $\bar Z^n$, $\bar U^n$ and an predicatble process $A^{\bar V^n}$ resp ( $\underbar Z^m$,$\underbar U^m$ and an predictable process $A^{\bar V^n}$) such that $d\bar V^n_t=dA^{\bar V^n}_t+\bar Z^n_t.dW_t+\int_E\bar U^n_t(e).\widetilde \mu(dt,de)$ resp( $d\underbar{V}^m_t=dA^{\underbar{V}^m}_t+\underbar {Z}^m_t.dW_t+\int_E\underbar{U}^m_t(e).\widetilde \mu(dt,de)$). we define the dynamics of $Z^\Q$, $$dZ^\Q_t=Z^\Q_{t^-}\left(\beta_t.dW_t+\int_E \kappa_t.\widetilde \mu(dx,dt)\right)$$
\noindent For every $n,m\in\N^*$, for every $\Q\ll\P$,  $\bar J^{\Q,n}$ resp( $\underbar{J}^{\Q,m}$ ) are submartingales and martingales for the optimal resp( surmartingales and martingale for the optimal) then we get:
\begin{equation*}
\begin{split}
A^{\bar V^n}_t&=-\max_{\{|\beta|\le n\}}\left\{\int_0^t(|l_s|+c_s\bar V^n_s)+\int_0^t \left(\langle\bar Z^n-\beta,\beta\rangle_s +{1\over 2}|\beta_s|^2\right)ds\right\}\\&
-\max_{\{ -1\le\kappa\le n\}}\left\{\int_0^t \int_{E}\left( \kappa_s(x)(v_s(x)+1)-(1+\kappa_s(x))\ln(1+\kappa_s(x))\right)\zeta_s(x)\rho(dx)ds\right\}
\end{split}
\end{equation*}
resp
\begin{equation*}
\begin{split}
A^{\underbar{V}^m}_t&=-\min_{\{|\beta|\le m\}}\left\{\int_0^t(-|l_s|+c_s\underbar V^m_s)+\int_0^t \left(\langle\underbar Z^m+\beta,\beta\rangle_s -{1\over 2}|\beta_s|^2\right)ds\right\}\\&
-\min_{\{ -1\le\kappa\le m\}}\left\{\int_0^t \int_{E}\left( \kappa_s(x)(v_s(x)-1)+(1+\kappa_s(x))\ln(1+\kappa_s(x))\right)\zeta_s(x)\rho(dx)ds\right\}
\end{split}
\end{equation*}
\noindent Using first order condition, we find for the first optimization problem $\kappa^*=\left(e^{\bar U^n}-1\right)\textbf{1}_{\{ e^{\bar U^n}-1\le n\}}+n\textbf{1}_{\{e^{\bar U^n}-1>n\}}$, then we deduce $(\bar V^n, \bar Z^n,\bar U^n)$ is the solution associated to the BSDE $(\bar h^n,|\xi_T|)$:
$$-d\bar V^n_t=\bar h_n(\bar V^n,\bar Z^n_t,\bar U^n_t)dt-\bar Z^n_t.dW_t-\bar U^n_t(x).\widetilde \mu(dt,dx),\quad \bar V^n_T=|\xi_T|.$$
\noindent where
\begin{equation*}
\begin{split}
&\bar h^n(y,z,u)=c y+|l|+{1\over 2}|z|^2\textbf{1}_{\{|z|\le n\}}+n(|z|-{n\over 2})\textbf{1}_{\{|z|> n\}}\\
&+\int_E \left[(e^{u(e)}-u(e)-1)\textbf{1}_{\{e^{u(e)}-1\le n\}}+\big(-(n+1)\ln(n+1)+n(u+1)\big)\textbf{1}_{\{e^{u(e}-1>n\}}\right]\zeta_t(e)\rho(de)
\end{split}
\end{equation*}
\noindent We use the same arguments of first order condition to deduce the solution of the second optimization problem; We get the triple $(\underbar V^m,\underbar Z^m,\underbar U^m)$ is the solution of the BSDE associated to $(\underbar h^m,-|\xi_T|)$:

$$-d\underbar V^m_t=\underbar h^m(\underbar{V}^m,\underbar{Z}^m_t,\underbar{U}^m_t)dt-\underbar{Z}^m_t.dW_t-\int_E\underbar{U}^m_t(x).\widetilde \mu(dx,dt),\quad \underbar {V}^m_T=-|\xi_T|.$$
\noindent where
\begin{equation*}
\begin{split}
&\underbar h^m(y,z,u)=cy-|l|-{1\over 2}|z|^2\textbf{1}_{\{|z|\le m\}}-m(|z|-{m\over 2})\textbf{1}_{\{|z|> m\}}\\
&+\int_E \left[g(-u(e))\textbf{1}_{\{e^{-u(e)}-1\le m\}}+\big((m+1)\ln(m+1)+m(u(e)-1)\big)\textbf{1}_{\{e^{-u(e)}-1>m\}}\right]\zeta_t(x)\rho(dx)\\
\end{split}
\end{equation*}
\noindent To finish the proof, we find $\bar Y^n\ge 0$ and  $\underbar Y^m\le 0$ since $|\xi_T|\ge 0$ and  $-|\xi_T|\le 0$ by comparaison theorem in Lipschitz case. Then we conclude $\bar h^n=\bar q^n$ and $\underbar h^m=\underbar q^m$.
for each $n,m\in \N$. By uniqueness of the solution of the BSDE associated to ($\bar q^n, |\xi_T|)$ and ($\underbar{q}^m, -|\xi_T|)$, we conclude $\bar V^n=\bar Y^n$ and $\underbar{V}^m=\underbar{Y}^m$ moreover since  $\underbar{q}^m \le f^{n,m}\le \bar q^n$ by Comparison Theorem, we find $\underbar{Y}^n\le Y^{n,m}\le \bar Y^n$.  However, from the dual representation of $\bar Y^n$ (resp $\underbar{Y}^m$) given by $\bar V^n$ resp( $\underbar{V}^m$), we conclude by Proposition 4.2 of \cite{BElK11} that for any $\sigma\le T$:
$$\bar Y^n_\sigma\le \rho_\sigma\left[S^c_{\sigma,T}|\xi_T|+\int_\sigma^T S^c_{\sigma,s}|l_s|ds\right] \hbox{ and } \quad \underbar {Y}^m_\sigma\le \rho_\sigma\left[S^c_{\sigma,T}|\xi_T|+\int_\sigma^T S^c_{\sigma,s}|l_s|ds\right]. $$
$\hfill\Box$\\[0.5cm]

\begin{Theorem}\label{Ito1} Let $X$ a semimartingale and $f\in {\cal C}^2$, then $f(X)$ is a semimartingale and we have:
\begin{equation*}
\begin{split}
f(X_t)=f(X_0)+\int_0^t {\partial f\over \partial x}(X_{s^-})dX_s+{1\over 2}\int_0^t {\partial^2 f \over \partial x^2}(X_{s^-})d\langle X^c\rangle_s
+\sum_{s\le t} \left[f(X_s)-f(X_{s^-})-{\partial f\over \partial x}(X_{s^-})\Delta X_s\right]
\end{split}
\end{equation*}
\noindent Moreover for any semimartingale  $X=X_0-V+M^c+M^d$, with continuous finite variation $V$, continuous martingale $M^c$ and jump martingale $M^d=U.(\mu-\nu)$ for some $U\in {\mathcal G}_{loc}$, we find:
\begin{equation*}
\begin{split}
f(X_t)&=f(X_0)+\int_0^t {\partial f\over \partial x}(X_{s^-})dX_s+{1\over 2}\int_0^t {\partial^2 f \over \partial x^2}(X_s^{-})d\langle M^c\rangle_s
\\&+\int_0^t\int_\R \left[f(X_{s^-}+U(s,x))-f(X_{s^-})-{\partial f\over \partial x}(X_{s^-})U(s,x)\right]\mu(ds,dx)
\end{split}
\end{equation*}

\end{Theorem}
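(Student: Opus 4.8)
This is the classical Itô formula for semimartingales together with its specialization to the canonical decomposition; the plan is to prove the general formula by the standard localization--discretization argument and then read off the second one.

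\textbf{Reduction by localization.} It suffices to treat the case where $f'$ and $f''$ are bounded: replace $f$ by a ${\cal C}^2$ function agreeing with it on $[-n-1,n+1]$ and with bounded derivatives, and stop $X$ at the first exit of $X_{\cdot^-}$ from $[-n,n]$. Since $X$ is c\`adl\`ag it is bounded on $[0,T]$, so for $n$ large these stopping times become the identity and the truncation is immaterial; thus we may assume $f,f',f''$ bounded and $X$ confined to a compact $K_n$, on which $f''$ is moreover uniformly continuous.

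\textbf{Discretization and Taylor expansion.} Fix $t$ and a sequence of (possibly random) subdivisions $0=t^k_0<\dots<t^k_{N_k}=t$ with mesh tending to $0$, chosen so that the $k$-th subdivision contains the finitely many jump times $s\le t$ with $|\Delta X_s|>1/k$. Telescoping $f(X_t)-f(X_0)=\sum_i[f(X_{t^k_{i+1}})-f(X_{t^k_i})]$ and applying a second-order Taylor formula with Lagrange remainder gives, with $\Delta_iX=X_{t^k_{i+1}}-X_{t^k_i}$ and $\eta_i$ between $X_{t^k_i}$ and $X_{t^k_{i+1}}$,
$$f(X_t)-f(X_0)=\sum_i f'(X_{t^k_i})\,\Delta_iX+\tfrac12\sum_i f''(\eta_i)(\Delta_iX)^2.$$
The first sum converges, uniformly on compacts in probability, to the stochastic integral $\int_0^t f'(X_{s^-})\,dX_s$, the integrand $f'(X_{\cdot^-})$ being bounded, left-continuous and predictable. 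In the second sum write $f''(\eta_i)=f''(X_{t^k_i})+\big(f''(\eta_i)-f''(X_{t^k_i})\big)$; using $\sum_i(\Delta_iX)^2\to[X]_t$, the uniform continuity of $f''$ on $K_n$, and that the ``large'' increments occur only at the isolated jump times, the $f''(X_{t^k_i})$ part produces $\tfrac12\int_0^t f''(X_{s^-})\,d\langle X^c\rangle_s$ from the continuous component of $[X]$, while along the jump times the block $f'(X_{t^k_i})\Delta_iX+\tfrac12 f''(\eta_i)(\Delta_iX)^2$ combines in the limit into the exact increment $f(X_s)-f(X_{s^-})$. Collecting terms yields the first display of the theorem. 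The jump series converges absolutely, since $|f(X_s)-f(X_{s^-})-f'(X_{s^-})\Delta X_s|\le\tfrac12\sup_{K_n}|f''|\,(\Delta X_s)^2$ and $\sum_{s\le t}(\Delta X_s)^2\le[X]_t<\infty$, and the same bound controls the error from not isolating the small jumps individually.

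\textbf{Specialization.} For $X=X_0-V+M^c+M^d$ with $V$ continuous of finite variation and $M^d=U.\widetilde\mu$, the continuous local-martingale part of $X$ is $M^c$, so $\langle X^c\rangle=\langle M^c\rangle$. By the very definition of the jump random measure $\mu=\mu^X$, summing a $\widetilde{\cal P}$-measurable function over the jumps of $X$ equals integrating it against $\mu$, with $U(s,x)$ encoding the jump sizes; hence
$$\sum_{s\le t}\big[f(X_{s^-}+\Delta X_s)-f(X_{s^-})-f'(X_{s^-})\Delta X_s\big]=\int_0^t\!\!\int_\R\big[f(X_{s^-}+U(s,x))-f(X_{s^-})-f'(X_{s^-})U(s,x)\big]\mu(ds,dx),$$
the integrand being $\widetilde{\cal P}$-measurable and bounded by $\tfrac12\sup_{K_n}|f''|\,U(s,x)^2$, which is $\nu$-integrable on the localized interval. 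Substituting into the general formula gives the second display.

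\textbf{Main obstacle.} The delicate point is the simultaneous passage to the limit in the second-order sum: showing the weighted sums of $(\Delta_iX)^2$ converge to $[X]_t$ with the correct $f''$-weighting, that the remainder $\tfrac12\big(f''(\eta_i)-f''(X_{t^k_i})\big)(\Delta_iX)^2$ is negligible, and that along jump times the approximate block telescopes into the exact jump of $f(X)$ --- all with convergence only in probability, uniformly on compacts. Everything else is bookkeeping; for an appendix statement one may alternatively just invoke Yor \cite{Yor75}.
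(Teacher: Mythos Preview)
Your proposal is correct: this is the standard localization--discretization proof of the It\^o formula for c\`adl\`ag semimartingales, and the specialization to the decomposition $X=X_0-V+M^c+M^d$ is immediate once one identifies $\langle X^c\rangle=\langle M^c\rangle$ and rewrites the jump sum as an integral against $\mu$.

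There is nothing to compare against: the paper states Theorem~\ref{Ito1} in the Appendix purely as a reference result and gives no proof, simply pointing to Yor~\cite{Yor75} (see the sentence after~\eqref{Doleans}). You anticipated this yourself in your final remark. So your write-up already does more than the paper; if anything, for an appendix entry a one-line citation would have sufficed.
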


\begin{Corollary}\label{Ito2} Let $f \in {\mathcal C}^2$ and consider the operators $D_1$ and $D_2$ defined by:
\begin{equation*}
D_1 f(x,y)= \left\lbrace
\begin{split}
& {f(x+y)-f(y)\over y},\quad \hbox{ if } y\not=0,\\
& f'(x),\quad \hspace{1.9cm}\hbox{ if } y=0 
\end{split}
\right.
\end{equation*}

and 

\begin{equation*}
D_2 f(x,y)= \left\lbrace
\begin{split}
& {2[f(x+y)-f(y)-yf'(y)]\over y^2},\quad \hbox{ if } y\not=0,\\
& f"(x),\quad \hspace{3.6cm}\hbox{ if } y=0 
\end{split}
\right.
\end{equation*}
For all $x,y \in \R$. then for any semimartingale $X$, the It\^o decomposition of $f(X)$ is given by:

$$f(X_t)=f(X_0)+\int_0^t D_1 f(X_s,0)dX_s+{1\over 2}\int_0^t D_2 f(X_s,\Delta X_s) d[X]_s.$$

\end{Corollary}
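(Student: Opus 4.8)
The plan is to derive the corollary directly from the It\^o formula of Theorem \ref{Ito1}, by recognizing $D_1 f(x,\cdot)$ and $D_2 f(x,\cdot)$ as the first- and second-order divided differences of $f$ based at the point $x$ (so that $D_1 f(x,0)=f'(x)$ and $D_2 f(x,0)=f''(x)$ are their continuous extensions at $y=0$), and by folding the second-order Stieltjes integral and the jump series of Theorem \ref{Ito1} into a single integral against $d[X]$.

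First I would recall the decomposition furnished by Theorem \ref{Ito1},
$$f(X_t)=f(X_0)+\int_0^t f'(X_{s^-})\,dX_s+\frac12\int_0^t f''(X_{s^-})\,d\langle X^c\rangle_s+\sum_{s\le t}\big[f(X_s)-f(X_{s^-})-f'(X_{s^-})\Delta X_s\big],$$
and observe that the first-order term needs no work: since $D_1 f(x,0)=f'(x)$, one has $\int_0^t D_1 f(X_{s^-},0)\,dX_s=\int_0^t f'(X_{s^-})\,dX_s$, matching the claimed formula (the integrand being evaluated at the predictable left limit $X_{s^-}$, as in Theorem \ref{Ito1}; replacing $X_s$ by $X_{s^-}$ is immaterial, since the two agree off the countable jump set).

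The core of the argument is the identity $[X]_t=\langle X^c\rangle_t+\sum_{s\le t}(\Delta X_s)^2$, valid for every semimartingale, which I would use to split the Stieltjes integral $\tfrac12\int_0^t D_2 f(X_{s^-},\Delta X_s)\,d[X]_s$ into its continuous and purely discontinuous parts. On the continuous part the jump set is $d\langle X^c\rangle$-negligible, so $\Delta X_s$ may be replaced by $0$ and, since $D_2 f(x,0)=f''(x)$, this contributes exactly $\tfrac12\int_0^t f''(X_{s^-})\,d\langle X^c\rangle_s$. On the atomic part $d[X]^d_s=\sum_s(\Delta X_s)^2\varepsilon_s(ds)$ the integrand at a jump time is $\tfrac12 D_2 f(X_{s^-},\Delta X_s)(\Delta X_s)^2$; by the defining formula of $D_2$ at $y\neq 0$ the factor $(\Delta X_s)^2$ cancels, leaving $f(X_{s^-}+\Delta X_s)-f(X_{s^-})-f'(X_{s^-})\Delta X_s=f(X_s)-f(X_{s^-})-f'(X_{s^-})\Delta X_s$, and summing over $s\le t$ reproduces the jump series. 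Adding the three pieces yields the stated formula.

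The only point requiring analytic care is the legitimacy of integrating $\tfrac12 D_2 f(X_{s^-},\Delta X_s)$ against $d[X]^d$, i.e. the absolute convergence of the resulting jump sum. For this I would invoke $f\in\mathcal C^2$ together with the second-order Taylor estimate $|f(x+y)-f(x)-y f'(x)|\le\tfrac12(\sup_{K}|f''|)\,y^2$ on the random compact $K$ containing the range of $X$ on $[0,t]$; this bounds $|D_2 f(X_{s^-},\Delta X_s)|$ uniformly and lets $\sum_{s\le t}(\Delta X_s)^2<\infty$ control the series. Since this is precisely the estimate under which the jump term of Theorem \ref{Ito1} already converges, no new input is needed and the corollary follows as an algebraic rearrangement. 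I would close by noting explicitly that the numerators defining $D_1$ and $D_2$ are to be read as the divided differences based at $x$, namely $f(x+y)-f(x)$ and $f(x+y)-f(x)-y f'(x)$, since only under this reading are $D_1,D_2$ continuous at $y=0$ with the values $f'(x)$ and $f''(x)$ used above.
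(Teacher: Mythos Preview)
Your derivation is correct and is exactly the natural route: start from the It\^o formula of Theorem~\ref{Ito1}, identify $D_1 f(\cdot,0)=f'$ for the first-order integral, split $[X]=\langle X^c\rangle+\sum(\Delta X)^2$ to handle the second-order term, and use the very definition of $D_2$ to collapse the jump sum. The paper itself does not supply a proof of Corollary~\ref{Ito2}; it is recorded in the Appendix as a standard reformulation of Theorem~\ref{Ito1}, so there is nothing to compare against beyond noting that your argument is precisely the intended one-line reduction.

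Your closing remark is also well taken: for $D_1$ and $D_2$ to extend continuously at $y=0$ with values $f'(x)$ and $f''(x)$, the numerators must indeed be the divided differences based at $x$, i.e.\ $f(x+y)-f(x)$ and $f(x+y)-f(x)-yf'(x)$; the occurrences of $f(y)$ and $f'(y)$ in the displayed definitions are typos in the paper, and your reading is the only one that makes the statement true.
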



\begin{thebibliography}{99}


\bibitem{AnBlaElr09} Ankirchner, S., Blanchet-Scalliet, C., Elyraud-Loisel, A. A Credit Risk Premia and Quadratic BSDE's with a single jump.
 \textit{International Journal of Theoretical and Applied Finance}, \textbf{{13}} (07), 1103-1129 (2010).

\bibitem{BBP97}  Barles, G., Buckdahn, R. and Pardoux, E. Backward
Stochastic Differential Equatiions and Integral-Partial Differential Equations.  \textit{Stochastics and Stochastics Reports}, \textbf{{60}}, 57-83 (1997).

\bibitem{BP90} Barlow, M. T., Protter, Ph.  On convergence of semimartingales.   \textit{S\'eminaire de probabilit\'es(Strasbourg),} \textbf{24}, p. 188-193 (1990).

\bibitem{BElK11} Barrieu, P. and El Karoui, N. Monotone stability of quadratic semimartingales with applications to general quadratic BSDE's and unbounded existence. \textit{The Annals of Probability},  \textbf{{41}}(3B). 1831-1863 (2013). 

\bibitem{BElkX08} Barrieu, P., El Karoui, N.  Pricing, Hedging and Optimally Designing Derivatives via Minimization of Risk Measures. In the book \textit{"Indifference Pricing: Theory and Applications" edited by Ren Carmona, Springer-Verlag (2008)}.

\bibitem{Bech04} Becherer, D. Utility-indifference hedging and valuation via reaction-diffusion systems, \textit{Proc .Roy. Soc, London} , 460 27-51 (2004).

\bibitem{Bech06} Becherer, D.  Bounded solutions to Backward SDE's with jump for utility optimization and indifference hedging, \textit{The Annals of Applied Probability},\textbf{16}, 2027-2054 (2006).

\bibitem{Bismut} J.M.  Bismut. Conjugate Convex Functions in
Optimal Stochastic Control, J. Math. Anal. Appl., \textbf{44}, 384-404 (1973).

\bibitem{BMS07} Bordigoni G., Matoussi,A.,   Schweizer, M.  A Stochastic control approach to a robust utility maximization
problem.  \textit{F. E. Benth et al. (eds.), Stochastic Analysis and Applications. Proceedings of the Second Abel Symposium, Oslo, 2005}, \textit{Springer}, 125-151 (2007) .


\bibitem{BMP83} Boccardo, L., Murat, F., Puel, J.P.  Existence de solutions faibles pour des \'equations \'elliptiques quasi-lin\'eaires \`a  croissance quadratique.\textit{ In Nonlinear Partial Differential Equations and Their Applications Research Notes in Math} \textbf{84}, 19-73, Pitman London (1983).


\bibitem{BriandHu05} Briand, Ph. and Hu, Y.  BDSE with quadratic growth and unbounded terminal value. \textit{Probab. Theor. and Related Fileds}, \textbf{136}, 604-618 (2006).

\bibitem{BriandHu07} Briand, Ph. and  Hu, Y. BDSE  convex confficient  and unbounded terminal value. to appear. \textit{Probab. Theor. and Related Fileds}.

\bibitem{Del72} Dellacherie, C.: Capacit\'es et processus stochastiques. Springer-Verlag Berlin Heideberg New York (1972).

\bibitem{DM75} Dellacherie, C., Meyer, P.A.  Probabilit\'{e}s et
Potentiel. Chap. I-IV. \textit{Hermann}, \textit{Paris (1975)}.

\bibitem{Del79} Dellacherie, C. In\'egalit\'es de convexit\'e pour les processus croissants  et les sousmartingales. S\'eminaire de Probabilit\'es (Strasbourg) XIII 1977/78, LNM 721, Springer-Verlag 371-377 (1979). 

\bibitem{DM80} Dellacherie, C., Meyer, P.A.  Probabilit\'{e}s et
Potentiel. \textit{Chap. V-VIII.} \textit{Hermann}, \textit{Paris (1980}).

\bibitem{DelTa10} Delbaen, F., Tang, S. Harmonic analysis of stochastic equations and backward stochastic differential equations, \textit{ Probability theory and Related Fields}, \textbf{146}, 291-336 (2010).

\bibitem{DE92}  Duffie, D.  and Epstein, L. G. Stochastic differential utility. \textit{Econometrica} \textbf{60}, 353-394 (1992).


\bibitem{Elk82} El Karoui, N.  Les aspects probabilites du contr\^{o}le
stochastique, in \textit{Ecole d'\'{e}t\'{e} de Saint-Flour}, \textit{Lecture Notes in Mathematics 876, 73-238. Springer Verlag Berlin.(1982)}


\bibitem{ElkM97}  El Karoui, N., Mazliak,L. (eds). Backward Stochastic Diffential Equations, \textit{Pitman Res. Notes Math. Ser}. \textit{Longman Harlow 364,
(1997)}.


\bibitem{ElkPQ97} El Karoui, N., Peng S.,  Quenez M.C. Backward Stochastic Differential Equations in Finance,
\textit{Mathematical Finance} \textbf{7}, 1-71 (1997).


\bibitem{ElKarouiRouge00}  El Karoui, N. and  Rouge, R.  Pricing via Utility
Maximization and Entropy, \textit{Mathematical Finance} \textbf{10}, 259-276 (2000).

\bibitem{ElKarouiHamadene03} El Karoui, N., Hamad\`ene, S. 
BSDEs and risk-sensitive control, zero-sum and nonzero-sum game problems of stochastic functional
differential equations. \textit{Stochastic Processes and their Applications} \textbf{107}, 145-169 (2003).


\bibitem{EkHM08}  El Karoui, N., Hamad\`ene, S. and Matoussi, A.  Backward
stochastic differential equations and applications. Chapter 8 in the book \textit{"Indifference Pricing: Theory and Applications", edited by Ren\'e Carmona, Princeton Series in Financial Engineering, Springer-Verlag, 267-320 (2008)}.


\bibitem{FS1} Fleming, W.H., Sheu, S.J.  Risk-Sensitive Control and an Optimal Investment Model,  \textit{Mathematical Finance}, \textbf{10}, 197-213 (17) (2000).

\bibitem{FS2} Fleming, W.H. and Sheu, S.J. Risk-Sensitive Control and an Optimal Investment Model II,  \textit{Annals of Applied Probability}
\textbf{12}, 730-767 (2002).

\bibitem{Hamadene96}  Hamad\`ene, S.   Equation diff\'erentielles stochastiques r\'etrogrades : le cas localement lipschitzien. \textit{Ann. Inst. Henri Poincar\'e}, \textbf{32}(5), 645-659 (1996).


\bibitem{HaOu99} Hamad\`ene, S., Ouknine, Y.  BSDE with local time, \textit{Stochastic and Stochastic Reports}, \textbf{66}, 103-119 (1999).

\bibitem{cam}Jeanblanc, M. and Le Cam; Y.  Immersion Property and Credit Risk
Modelling, Festchrift for Y. Kabanov.

\bibitem{JMN10} Jeanblanc, M., Matoussi, A., Ngoupeyou, A.  Robust utility maximization problem in a discontinuous filtration.  \textit{arXiv:1201.2690v3} (2013).

\bibitem{JMN12}Jeanblanc, M., Matoussi, A., Ngoupeyou, A.  Indifference Pricing of Unbounded Credit Derivatives. \textit{Preprint } (2012).


\bibitem{JiaPha09} Jiao, Y., Pham, H.  Optimal investment with counterparty risk: a default-density modelling approach. 
\textit{Finance and Stochastics}, \textbf{15}, 725-753 (2011).

\bibitem{HuImkellerMueller05} Hu, Y., Imkeller, P. and M\"ulller, M.  Utility Maximization in Incomplete Markets.
\textit{Annals of Applied Probability} \textbf{15},  1691-1712 (2005).

\bibitem{K00}  Kobylanski, M.  Backward Stochastic Differential Equations and Partial Differential Equations with
Quadratic Growth, \textit{Annals of Probability}, \textbf{28}, 558-602 (2000).

\bibitem{kusuoka} Kusuoka, S.: A remark on default risk models, \textit{Adv. Math. Econ.}, \textbf{1},69-82, (1999).


\bibitem{LQ03} Lazrak, A.  and Quenez, M.-C.  A generalized stochastic differential utility.
\textit{Mathematics of Operations Research}, \textbf{28}, 154-180 (2003).


\bibitem{LepeltierSanMartin97}   Lepeltier, J.-P.,  San Martin, J. 
Backward stochastic differential equations with continuou
coefficient, Statist. Probab. lett., \textbf{32}, 425-430 (1997).

\bibitem{LepSan98} Lepeltier, J.-P., San Martin, J. existence of BSDE with Supelinear Quadratic coefficient, \textit{Stochastics and Stochastic} Report \textbf{63},  227-240 (1998).

\bibitem{LepingleMemin78} Lepingle, D., M\'emin, J.  Sur l'int\'egrabilit\'e uniforme des martingales exponentielles. Z. Wahrscheinlichkeistheorie verw. Gebiete. 42, Springer-Verlag, 175-203 (1978).

\bibitem{LimQue10} Lim, T. and  Quenez, M. Exponential Utility Maximization in an Incomplete Market with Default.  \textit{Electronical Journal of Probability}, \textbf{16} (53), 
1434-1464 (2011).

\bibitem{MY99}  Ma, J., Yong, J. Forward-Backward Stochastic Differential Equations and Their Applications. \textit{Lecture Notes in Mathematics 1702}. \textit{Springer, Berlin (1999)}.

\bibitem{MS05} Mania, M.  and Schweizer, M. 
Dynamic exponential utility indifference valuation. \textit{The Annals of Applied Probability }, \textbf{15}, 2113-2143 (2005).


\bibitem{MT06} Mania, M.  and Tevzadze, R. An exponential Martingale Equation.
\textit{Elect. Comm. in Probab.}, \textbf{11},  206-216 (2006).

\bibitem{MeYoe76} Meyer, P.A, Yoeurp, C. Sur la d\'ecomposition multiplicative des sous-martingales positives.  Universit\'e de Strasbourg, S\'eminaire de Probabilit\'es, 10, 501-504 (1976).


\bibitem{Mor07} Morlais, A. utility maximization in a jump market model, \textit{Stochastics}, \textbf{81}, 1-27 (2009).

\bibitem{PP90}  Pardoux, E.,   Peng, S.  Adapted solution of a backward stochastic
differential equation, \emph{Systems and Control Letters}, \textbf{14}, 55-61 (1990).



\bibitem{Peng97} Peng, S.   Backward SDE and Related $g$-Expectations,
in Backward Stochastic Diffential Equations \textit{(eds: N. El Karoui and
L. Mazliak),  \textit{Pitman Res. Notes Math. Ser. Longman Harlow},   \textbf{364}, 141-159 (1997)}.

\bibitem{Peng99} Peng, S.  Monotonic limit theorem of BSDE and nonlinear decomposition theorem of Doob-Meyer's type. \textit{Proba. Theory Rel. Fileds} \textbf{113}, 473-499 (1999).

\bibitem{Peng03} Peng, S.  Nonlinear Expectations and Risk
Measures, \textit{in Proceedings of the CIME-EMS summer school}, Bressanone, Italy, july 6-12, (2003).

\bibitem{Peng04} Peng, S.  Nonlinear Expectations, Nonlinear Evaluations and Risk mesaures. \textit{LNM 1856, Frittelli and Runggaldier (Eds.)}, Springer-Verlag Berlin Heidelberg , pp. 165-253 (2004).

\bibitem{PeXu05} Peng, S., Xu, M.Y. $G_{\Gamma}$-expectation and the related non-linear Doob-Meyer decomposition theorem. \textit{Control Theory and Related Topics}, 122-140  (2007).

\bibitem{Yor75} Yor, M.  Sur les in\'egalit\'es optionnelles et une suite remarquable de formule exponentielle. Universit\'e de Strasbourg, S\'eminaire de Probabilit\'es (1975).

\end{thebibliography}
\end{document}